\newcommand{\eps}{\varepsilon}
\newcommand{\R}{\mathbb{R}}
\newcommand{\C}{\mathbb{C}}
\newcommand{\N}{\mathbb{N}}
\newcommand{\Z}{\mathbb{Z}}
\newcommand{\es}[1]{\begin{equation}\begin{split}#1\end{split}\end{equation}}
\newcommand{\est}[1]{\begin{equation*}\begin{split}#1\end{split}\end{equation*}}
\newcommand{\as}[1]{\begin{align}#1\end{align}}
\newcommand{\II}{\mathcal{J}}
\newcommand{\seq}{\subseteq}
\newcommand{\cdotsp}{\cdots\hspace{-0.03em}}
\newtheorem*{theo*}{Theorem}
\newtheorem{theo}{Theorem}
\newtheorem{ezer}{Exercise}
\newtheorem{prop}[ezer]{Proposition}
\newtheorem{lemma}{Lemma}
\newtheorem{conj}{Conjecture}[section]
\newtheorem*{remark}{Remark}
\newtheorem*{rem*}{Remark}
\def\sumprime{\operatornamewithlimits{\sum\nolimits^\prime}}
\newcommand{\pr}[1]{\left( #1\right)}
\newcommand{\Res}{\operatorname*{Res}}
\newcommand{\ba}{{\boldsymbol a}}
\newcommand{\be}{{\boldsymbol e}}
\newcommand{\bd}{{\boldsymbol d}}
\newcommand{\bepsilon}{{\boldsymbol{\epsilon}}}
\newcommand{\bone}{{\boldsymbol 1}}
\newcommand{\bl}{{\boldsymbol \ell}}
\newcommand{\balpha}{\boldsymbol\alpha}
\newcommand{\bbeta}{\boldsymbol\beta}
\newcommand{\bx}{\boldsymbol{x}}
\newcommand{\bv}{\boldsymbol{v}}
\newcommand{\bz}{\boldsymbol{z}}
\newcommand{\bc}{\boldsymbol{c}}
\newcommand{\bff}{\boldsymbol{f}}
\newcommand{\bb}{\boldsymbol{b}}
\newcommand{\bg}{\boldsymbol{g}}
\newcommand{\bzero}{\boldsymbol{0}}
\newcommand{\br}{\boldsymbol{r}}
\newcommand{\comment}[1]{}
\newcommand{\Smt}{\mathfrak {S}_1}
\let\originalleft\left
\let\originalright\right
\renewcommand{\left}{\mathopen{}\mathclose\bgroup\originalleft}
\renewcommand{\right}{\aftergroup\egroup\originalright}
\numberwithin{equation}{section}
\author{Sandro Bettin}
\address{Sandro Bettin, DIMA - Dipartimento di Matematica, Via Dodecaneso, 35, 16146 Genova - ITALY}
\email{bettin@dima.unige.it}
\author{Kevin Destagnol}
\address{Kevin Destagnol, Max Planck Institut f\"ur Mathematik, Vivatsgasse 7, 53111 Bonn - GERMANY}
\email{kdestagnol@mpim-bonn.mpg.de}
\subjclass{11D45, 11N37, 11M41}
\keywords{Manin-Peyre's conjectures, divisor function}
\begin{document}

\title[The Manin-Peyre's conjectures for $x_1y_2y_3+x_2y_1y_3+x_3y_1y_2=0$]{The power-saving Manin-Peyre's conjectures \\ for a senary cubic}

\begin{abstract}
Using recent work of the first author~\cite{Sandro1}, we prove a strong version of the Manin-Peyre's conjectures with a full asymptotic and a power-saving error term for the two varieties respectively in $\mathbb{P}^2 \times \mathbb{P}^2$ with bihomogeneous coordinates $[x_1:x_2:x_3],[y_1:y_2,y_3]$ and in $\mathbb{P}^1\times \mathbb{P}^1 \times \mathbb{P}^1$ with multihomogeneous coordinates $[x_1:y_1],[x_2:y_2],[x_3:y_3]$ defined by the same equation $x_1y_2y_3+x_2y_1y_3+x_3y_1y_2=0$. We thus improve on recent work of Blomer, Br\"udern and Salberger \cite{BBS} and provide a different proof based on a descent on the universal torsor of the conjectures in the case of a del Pezzo surface of degree 6 with singularity type $\mathbf{A}_1$ and three lines (the other existing proof relying on harmonic analysis \cite{CLT}). Together with~\cite{Blomer2014} or with recent work of the second author \cite{Dest2}, this settles the study of the Manin-Peyre's conjectures for this equation.
\end{abstract}

\maketitle

\tableofcontents

\section{Introduction}

In the late 80s, Manin and his collaborators \cite{FMT} proposed a precise conjecture predicting, for smooth Fano varieties, the behaviour of the number of rational points of bounded height (with respect to an anticanonical height function) in terms of geometric invariants of the variety. The conjecture was later generalised by Peyre \cite{P95} to ``\textit{almost Fano}" varieties in the sense of~\cite[D\'efinition 3.1]{P95}.
\begin{conj}[\textbf{Manin, 1989}]
Let $V$ be an ``almost Fano" variety in the sense of \cite[\textit{D\'efinition 3.1}]{P95} with $V(\mathbb{Q}) \neq \varnothing$ and let $H$ be an anticanonical height function on $V(\mathbb{Q})$. Then there exists a Zariski open subset $U$ of $V$ and a constant $c_{H,V}$ such that, for $B \geqslant 1$,
$$
N_{U,H}(B):=\#\{x \in U(\mathbb{Q}) \mid H(x) \leqslant B\}=c_{H,V}B\log(B)^{\rho-1}(1+o(1)),
$$
where $\rho={\rm{rank}}({\rm{Pic}}(V))$.
\label{manin1}
\end{conj}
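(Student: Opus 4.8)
The plan must begin with a caveat: Conjecture~\ref{manin1} in full generality is one of the central open problems of the area, with no approach known that applies uniformly to all almost Fano varieties, and even the precise shape of the statement is subtle (the Zariski-open $U$ cannot in general be taken to be $V$, and in dimension $\geqslant 3$ a further refinement removing thin sets may be necessary). The handful of classes where the conjecture --- in Peyre's refined form, with leading constant $c_{H,V}=\alpha(V)\,\beta(V)\,\tau_H(V)$ --- is a theorem are attacked by essentially three methods: the Hardy--Littlewood circle method for complete intersections of small degree in many variables; harmonic analysis on the adelic points for equivariant compactifications of algebraic groups (as in the alternative proof~\cite{CLT} cited in the abstract); and descent to the universal torsor followed by lattice-point counting, which is the route available for del Pezzo surfaces and the one this paper takes. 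So what I would actually prove is the conjecture for the two senary cubics of the abstract, equivalently for the split and a non-split model of the degree-$6$ del Pezzo surface of singularity type $\mathbf{A}_1$ with three lines, and the first step is to write down the Cox ring of a minimal desingularisation, parametrise the points of $U(\Q)$ of anticanonical height $\leqslant B$ by integer points on the torsor subject to the coprimality conditions the torsor equations impose, and thereby rewrite $N_{U,H}(B)$ as a lattice-point count carrying Möbius-type congruence weights over a region dilating with $B$.

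The analytic core is the next step. After resolving the coprimality conditions by Möbius inversion and using the very special multilinear shape of $x_1y_2y_3+x_2y_1y_3+x_3y_1y_2=0$ --- which dehomogenises to $u_1+u_2+u_3=0$ --- the count reduces to a sum of the divisor function $d(n)$ twisted by congruence conditions and greatest-common-divisor constraints, with $n$ ranging over intervals that grow with $B$ and moduli that also grow. This is precisely the kind of shifted / arithmetic-progression divisor sum handled in the recent work of the first author~\cite{Sandro1}, which delivers the asymptotic with a power-saving remainder and, crucially, with uniformity in the moduli and the shift parameters going well beyond classical estimates for $\sum_{n\leqslant x}d(n)d(n+h)$. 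I expect this to be the main obstacle: the saving has to be a genuine power of $B$, and it must survive the outer summations over all the auxiliary Möbius and GCD variables, so the error analysis of~\cite{Sandro1} has to be propagated through while keeping that uniformity under control.

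The last step is to certify that the constant produced equals Peyre's $c_{H,V}$ and not merely something of the right order $B(\log B)^{\rho-1}$. From the configuration of $(-2)$- and $(-1)$-curves in the $\mathbf{A}_1$ case one reads off $\rho=\mathrm{rank}(\mathrm{Pic}(V))$ and evaluates the effective-cone constant $\alpha(V)$; the factor $\beta(V)=\#H^1(\Gal(\overline{\Q}/\Q),\mathrm{Pic}(\overline{V}))$ is $1$ for the split model and encodes the arithmetic twist in the other; and the product of $p$-adic densities together with the real density at infinity has to be matched, after the standard $L$-function convergence-factor regularisation coming from the Picard torus, with the Tamagawa number $\tau_H(V)$. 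Comparing our Euler product and archimedean integral against this prediction is delicate but mechanical once the torsor is explicit; the two genuinely hard points of the argument thus remain the power-saving input from~\cite{Sandro1} and this exact identification of the leading constant, while the general case of Conjecture~\ref{manin1} stays out of reach for want of any method that makes the torsor parametrisation and the subsequent arithmetic analysis work across all $V$.
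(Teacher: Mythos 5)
You were right not to attempt a proof: the statement you were given is Conjecture~\ref{manin1} itself, which the paper states as a conjecture and neither proves nor claims to prove in any generality; what the paper establishes are the special cases of Theorems~\ref{mtt1} and~\ref{2} (together with~\ref{3}), and your outline of how those cases go --- descent to the versal torsor with coprimality conditions, an analytic input from the first author's work giving power saving, and a final identification of the leading constant with Peyre's prediction --- is indeed the paper's strategy in broad strokes.

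Two points in your sketch are, however, inaccurate as descriptions of what actually has to be done. First, the two varieties are not ``the split and a non-split model of the degree-$6$ del Pezzo surface'': only $\widehat{W}_3\subset(\mathbb{P}^1)^3$ is the sextic del Pezzo surface with an $\mathbf{A}_1$ singularity and three lines (Theorem~\ref{3}), with Picard rank $4$ and a degree-$3$ log polynomial, while $\widetilde{W}_3\subset\mathbb{P}^2\times\mathbb{P}^2$ is a singular \emph{threefold} of Picard rank $2^3-3=5$, whence the degree-$4$ polynomial in Theorem~\ref{mtt1}; in both cases $\beta=1$, and the extra difficulty in the $\widetilde{W}_3$ case is analytic, not a Galois twist. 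Second, the analytic core is not an asymptotic for shifted divisor sums in arithmetic progressions propagated through the M\"obius variables. After the torsor parametrisation the count is expressed, via smooth cut-offs (Lemma~\ref{cut-off}) and multiple Mellin transforms, as a multidimensional contour integral against the ``parabolic Eisenstein series'' $\mathcal A_{\ba}(\balpha,\bbeta)$ of Lemma~\ref{ml}, and the decisive input from~\cite{Sandro2} (building on~\cite{Sandro1}) is the meromorphic continuation and polynomial bounds for $\mathcal E_{\ba}$ slightly beyond the line of absolute convergence, uniformly in $\ba$; the main terms then come out as residues of products of $\zeta$ and $\Gamma$ factors, and the constant is matched with Peyre's prediction via the explicit integral of Lemma~\ref{integral} and the Euler products of Lemmas~\ref{euler} and of Section~\ref{pmt} (with~\cite{BBS} supplying the comparison for $\widetilde{W}_3$). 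So your plan points at the right toolbox but would need this contour-integral architecture, not a divisor-sum-in-progressions estimate, to deliver the power saving claimed in Conjecture~\ref{manin2} for these two varieties.
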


Peyre~\cite{P95}, and then Batyrev and Tschinkel \cite{BT2} and Salberger \cite{Sal} in a more general setting, also proposed a conjectural expression for the constant $c_{H,V}$ in terms of geometric invariants of the variety. We do not record this conjecture here in any more details and refer the interested reader to \cite{P03} for example. There are a number of refinements of the Manin-Peyre's conjectures and we will focus throughout this paper on the following one \cite{Tim}.

\begin{conj}[\textbf{Refinement of the Manin-Peyre's conjectures}]
Let $V$ be an ``almost Fano" variety in the sense of \cite[\textit{D\'efinition 3.1}]{P95} with $V(\mathbb{Q}) \neq \varnothing$ and let $H$ be an anticanonical height function on $V(\mathbb{Q})$. Then there exists a Zariski open subset $U$ of $V$, a polynomial $P_{U,H}$ of degree $\rho$ and $\delta\in]0,1[$ such that, for $B \geqslant 1$
$$
N_{U,H}(B)=c_{H,V}BP_{U,H}(\log(B))+O\left(B^{1-\delta}\right),
$$
where $\rho={\rm{rank}}({\rm{Pic}}(V))$ and the leading coefficient of $P_{U,H}$ agrees with Peyre's prediction.
\label{manin2}
\end{conj}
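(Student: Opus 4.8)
The statement above is the refined conjecture in full generality; it is open, and the plan below is the argument that the rest of the paper carries out unconditionally for the two varieties cut out by $x_1y_2y_3+x_2y_1y_3+x_3y_1y_2=0$ and, by descent, for the singular sextic del Pezzo surface, and which one would hope to run in general whenever the relevant analytic inputs become available. The first step is to replace $N_{U,H}(B)$ by a lattice-point count on a universal torsor, via the descent of Colliot-Th\'el\`ene--Sansuc, Salberger \cite{Sal} and Peyre \cite{P95}: using the Cox ring of $V$, each point of $U(\mathbb{Q})$ of anticanonical height at most $B$ lifts, bijectively up to the action of the N\'eron--Severi torus and up to a choice from a finite set of torsors indexed by a Galois-cohomology group (trivial in the cases at hand), to a primitive integral point on an explicit affine variety $\mathcal{T}$, subject to coprimality conditions among blocks of coordinates and to the height inequality, which becomes a semialgebraic region $\mathcal{R}(B)$ scaling linearly in $B$. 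Thus $N_{U,H}(B)$ equals, up to an error that one must eventually bound by $O(B^{1-\delta})$, a sum over $\mathcal{T}(\mathbb{Z})\cap\mathcal{R}(B)$ with M\"obius weights encoding the coprimalities.

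The second step is the asymptotic evaluation of this count with a power-saving error term. Here I would isolate a sub-collection of the torsor variables, evaluate the innermost sum exactly, and sum the surviving variables by means of a multiple Dirichlet series together with contour shifts past a pole whose order equals $\rho=\operatorname{rank}\,\mathrm{Pic}(V)$; this produces the polynomial factor $P_{U,H}(\log B)$ of the predicted degree. For the senary cubic the torsor relation is, after M\"obius inversion clears the coprimality constraints, essentially the bilinear equation $z_1z_1'+z_2z_2'+z_3z_3'=0$, so the inner sum is a correlation of divisor functions, and the required asymptotic with a power saving that is \emph{uniform} in the surviving parameters is exactly the content of the first author's recent work \cite{Sandro1}; summing that back in yields $N_{U,H}(B)=c_{H,V}\,B\,P_{U,H}(\log B)+O(B^{1-\delta})$.

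The third step is to check that $c_{H,V}$ matches Peyre's prediction. One factors the main term of step two as a product of an archimedean density and $p$-adic densities, identifies the archimedean factor --- an integral over $\mathcal{R}(1)$ --- with the product of the constants $\alpha(V)$, $\beta(V)$ and the real Tamagawa volume, and identifies the product over $p$ of the $p$-adic densities with the finite Tamagawa volume, using the explicit description of $\mathrm{Pic}(V)$ with its Galois action and of the effective cone to pin down both $\rho$ and the leading coefficient of $P_{U,H}$.

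The main obstacle is the second step. The descent of step one and the constant check of step three are essentially formal once the geometry of $V$ is in hand; but evaluating the torsor point count with a genuine power saving requires deep and \emph{uniform} estimates for the arithmetic functions attached to the torsor equations --- correlations of the divisor function, in the present case --- and no analogue of \cite{Sandro1} is available for the torsors of arbitrary almost Fano varieties. This is why the conjecture stays open in general, and why the argument carried out below is confined to the family for which \cite{Sandro1} applies.
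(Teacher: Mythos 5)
The statement you were asked about is Conjecture~\ref{manin2} itself, which the paper only states as background and never proves in general — it establishes it unconditionally just for $\widetilde{W}_3$ and $\widehat{W}_3$ in Theorems~\ref{mtt1}, \ref{2} and~\ref{3} — and your proposal rightly recognizes this and offers a strategy rather than a proof. That strategy (descent on the versal torsor, M\"obius inversion of the coprimality conditions, evaluation of the resulting count via multiple Dirichlet series and contour shifts using the first author's uniform results on the ternary divisor correlations attached to $a_1x_1z_1+a_2x_2z_2+a_3x_3z_3=0$, and finally matching the leading constant with Peyre's prediction via the $p$-adic and archimedean densities) is essentially the same route the paper follows in Sections~\ref{par}--\ref{pmt} for the cases it does treat, so there is nothing to correct beyond the observation that neither the paper nor your outline yields the conjecture in its stated generality.
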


There has been very little investigations on the lower order coefficients and this seems to be a difficult question but the examples we study in this paper might be an interesting testing ground.\\
\indent
These two conjectures have been the center of numerous investigations in the past few years using techniques from harmonic analysis in the case of equivariant compactifications of some algebraic groups (see for example~\cite{BT,TT}) or from analytic number theory and the circle method in the case where the number of variables is large enough with respect to the degree (see for example \cite{Bi,BHB}). In the remaining cases, the only available method relies on a combination of analytic number theory or geometry of number and on a descent on some quasi-versal torsors in the sense of \cite{CS}. 
Most of these investigations (especially in cases relying on a descent) are concerned with surfaces (see for example works of Browning, La Bret\`eche, Derenthal and Peyre~\cite{20,35,BrD}), whereas very little is known in higher dimensions.
In particular there are only very few examples of Fano varieties in higher dimensions for which Conjecture~\ref{manin2}, or even Conjecture~\ref{manin1}, are known to hold using such a descent method (see~\cite{19,Schmidt,Blomer2014,Dest2}). The goal of this paper is to give another such example.\\
\newline
\indent
In this paper we shall consider the solutions to the equation
\begin{equation}
x_1y_2y_3\cdots y_n+x_2y_1y_3\cdots y_n+\cdots+x_ny_1y_2\cdots y_{n-1}=0.
\label{eq}
\end{equation}
Notice that, upon excluding the points for which $y_1\cdots y_n=0$, one can also rewrite the above equation as a linear equations between fractions
\est{
\frac{x_1}{y_1}+\cdots+ \frac{x_n}{y_n}=0.
}
We shall focus on the case $n=3$ in the present paper. The cases $n \geqslant 4$ will be the subject of future work.

One can view equation~\eqref{eq} in three natural ways.
First, one can consider the singular projective hypersurface of $\mathbb{P}^{2n-1}$ with homogeneous coordinates $[x_1:\cdots:x_n:y_1:\cdots:y_n]$ defined by~\eqref{eq}. This was done in 2014 by Blomer, Br\"udern and Salberger who in~\cite{Blomer2014} proved Conjecture~\ref{manin2} for $n=3$ using a combination of lattice point counting and analytic counting by multiple Mellin integrals. This setting was also studied by the second author~\cite{Dest2} who, by elementary counting methods, proved Conjecture~\ref{manin1} for the case $n\geq2$ for the following anticanonical height function
$$
H\left([x_1:\cdots:x_n:y_1:\cdots:y_n]\right)=\max_{1 \leqslant i \leqslant n}\max\{|x_i|,|y_i|\}^n.
$$
It is likely that a generalization of the methods of \cite{Blomer2014} or of the present paper allows one to obtain Conjecture \ref{manin2} for every $n \geqslant 4$ as well. It is worth noticing that in this case, the varieties under consideration are equivariant compactifications of the algebraic groups $\mathbb{G}_a^{n-1} \times \mathbb{G}_m^{n-1}$. Harmonic analysis techniques might also be able to handle this case. To our knowledge this hasn't been done so far, but it would be interesting to compare the two approaches.

One can also think of~\eqref{eq} as defining the singular biprojective variety $\widetilde{W}_n$ of $\left(\mathbb{P}^{n-1}\right)^2$ with bihomogeneous coordinates $[x_1:\cdots:x_n],[y_1:\cdots:y_n]$ defined by the equation~\eqref{eq}. An anticanonical height function is then given by
$$
\widetilde{H}\left([x_1:\cdots:x_n],[y_1:\cdots:y_n]\right)=\max_{1\leqslant i \leqslant n}|x_i|^{n-1}\max_{1\leqslant i \leqslant n}|y_i|.
$$
In this case, the varieties under consideration are not equivariant compactifications, the rank of the Picard group of $\widetilde{W}_n$ is $2^n-n$ and the subset where $x_1\cdots x_ny_1 \cdots y_n=0$ is an accumulating subset. In recent work \cite{BBS}, Blomer, Br\"udern and Salberger showed that Conjecture \ref{manin1} holds for $\widetilde{W}_3$ using Fourier analysis. Using recent results of the first author \cite{Sandro2}, we are able to refine the aforementioned result~\cite{BBS} proving the stronger Conjecture \ref{manin2} for $\widetilde{W}_3$.

\begin{theo}\label{mtt1}
Let $\widetilde{U}$ be the Zariski open subset of $\widetilde{W}_3$ given by the condition $x_1x_2x_3y_1y_2y_3 \neq 0$. There exist $\xi_1>0$ and a polynomial $P_1$ of degree 4 such that
$$
N_{\widetilde{W}_3,\widetilde{H}}(B):=\#\left\{\left([x_1:\cdots:x_n],[y_1:\cdots:y_n]\right) \in \widetilde{U}(\mathbb{Q}) \mid \widetilde{H}(\mathbf{x},\mathbf{y}) \leqslant B\right\}=BP_1(\log(B))+O\left(B^{1-\xi_1}\right).
$$
The leading coefficient of $P_1$ is equal to $\frac{\mathfrak S_1 \cdot \mathcal I}{144}$, where 
\est{
\mathfrak S_1&:=\prod_{p}\pr{1-\frac1p}^5\pr{1+\frac 5p+\frac 5{p^2}+\frac1{p^3}},\\
\mathcal I&:=\iint_{{[-1,1]^3\times[0,1]^2}}\chi_{[0,1/|z|]}\Big(\frac{x_1}{y_1}+\frac{x_2}{y_2}\Big)\,dx_1dx_2dz\cdot\frac{dy_1dy_2}{y_1y_2}=\pi^2 + 24 \log 2-3
}
and $\chi_X$ denotes the characteristic function of a set $X$.
\end{theo}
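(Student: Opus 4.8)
The plan is to set up a bijection between the rational points of $\widetilde{U}$ counted by $N_{\widetilde{W}_3,\widetilde{H}}(B)$ and integer points on a universal torsor, and then to estimate the resulting counting problem using the analytic input of~\cite{Sandro1,Sandro2}. First I would parametrise $\widetilde{U}(\mathbb{Q})$ by choosing coprime integer representatives $(x_1,x_2,x_3)$ and $(y_1,y_2,y_3)$ for the two sets of bihomogeneous coordinates; since the singular locus and the three lines are excluded, the equation $x_1y_2y_3+x_2y_1y_3+x_3y_1y_2=0$ together with the height constraint $\max_i|x_i|^2\max_i|y_i|\leq B$ governs the count. The key algebraic step is to extract the greatest common divisors among the $y_i$ (and the induced divisibilities on the $x_i$ forced by the equation), writing $y_i=\prod_{j\neq i}d_{j}\cdot z_i$ or a similar torsor factorisation, so that the equation~\eqref{eq} becomes, after dividing through by the common factor $y_1y_2y_3/\gcd$'s, a \emph{primitive} linear relation of the shape $\frac{x_1}{y_1}+\frac{x_2}{y_2}+\frac{x_3}{y_3}=0$ among reduced fractions. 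This is exactly the feature that was highlighted in the introduction: on $\widetilde{U}$ the equation is equivalent to a linear equation between fractions, and it is this form that makes the first author's results applicable.

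Next I would reduce the count to an average of a divisor-type correlation. After the torsor parametrisation, for fixed denominators $y_1,y_2$ the number of admissible $x_1,x_2$ (with $x_3,y_3$ then essentially determined, up to a divisor condition coming from clearing denominators in $\frac{x_1}{y_1}+\frac{x_2}{y_2}=-\frac{x_3}{y_3}$) is controlled by a shifted-divisor sum, and summing over all variables produces precisely the kind of sum $\sum_{n}d(n)d(n+h)$-flavoured average, weighted by smooth height cutoffs, that~\cite{Sandro1} evaluates with a power-saving error term. I would feed the height condition $\max_i|x_i|^2\max_i|y_i|\leq B$ into these estimates as the ranges of summation; dyadically decomposing the variables, applying the main result of~\cite{Sandro1} on each dyadic box, and reassembling, yields a main term of size $B(\log B)^4$ — the exponent $4=\rho-1$ being consistent with $\rho=\mathrm{rank}\,\mathrm{Pic}(\widetilde W_3)=2^3-3=5$ — plus an error $O(B^{1-\xi_1})$ for some explicit $\xi_1>0$ inherited from the power-saving in~\cite{Sandro1}.

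Finally I would identify the leading coefficient. The arithmetic of the torsor factorisation (coprimality conditions among the $d_{ij}$ and the $z_i$, and the congruence condition from clearing denominators) contributes, after the standard passage from arithmetic sums to Euler products, the singular series $\mathfrak S_1=\prod_p(1-1/p)^5(1+5/p+5/p^2+1/p^3)$; the factor $(1-1/p)^5$ is the convergence factor matching $\rho=5$, and the local factor counts, roughly, the $5+5+1$ extra torsor variables modulo the obvious relations. The archimedean density, obtained by replacing the arithmetic sum over $x_1,x_2,z$ with scaled ranges and the sum over $y_1,y_2$ with the Mellin-type weight $\frac{dy_1dy_2}{y_1y_2}$, is exactly $\mathcal I=\iint_{[-1,1]^3\times[0,1]^2}\chi_{[0,1/|z|]}\bigl(\tfrac{x_1}{y_1}+\tfrac{x_2}{y_2}\bigr)\,dx_1dx_2dz\,\tfrac{dy_1dy_2}{y_1y_2}$, and a direct computation gives $\mathcal I=\pi^2+24\log 2-3$; the combinatorial constant $\frac1{144}$ records the normalisation from the $(\mathbb P^2)^2$ structure (the orders of the stabilisers / the $2^3\cdot 3!=48$-type symmetry factors). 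Matching this product $\frac{\mathfrak S_1\cdot\mathcal I}{144}$ against Peyre's constant is then a formal verification of the type carried out in~\cite{BBS}.

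The main obstacle I expect is \emph{not} the analytic estimate — that is packaged in~\cite{Sandro1} — but rather the bookkeeping of the torsor descent: correctly tracking all the gcd's and coprimality conditions so that the equation genuinely reduces to a \emph{primitive} fractional linear relation to which~\cite{Sandro1} applies, and then showing that the secondary contributions (the loci where some $\gcd$ is large, or the contribution of the excluded accumulating subset $x_1x_2x_3y_1y_2y_3=0$, which must be shown to be $O(B^{1-\xi_1})$ and hence negligible) do not swamp the $B(\log B)^4$ main term. Verifying that the constant that falls out of this process matches Peyre's prediction — in particular getting the $\frac1{144}$ and the shape of $\mathfrak S_1$ exactly right — is the second delicate point.
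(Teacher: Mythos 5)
Your overall strategy --- descent on the versal torsor to reduce to a ternary linear equation with coprimality conditions, followed by analytic input from~\cite{Sandro1,Sandro2} and an identification of the constant --- is the same as the paper's, and your first paragraph matches Section~\ref{par}. The genuine gap is in the middle step, where you propose to reduce the count to a binary shifted-divisor average (``$\sum_n d(n)d(n+h)$-flavoured''), dyadically decompose all variables, apply the main result of~\cite{Sandro1} on each box, and reassemble. First, the relevant object after the descent is not a binary shifted convolution but a \emph{ternary} additive divisor correlation: the torsor equation is $x_1d_1+x_2d_2+x_3d_3=0$, and each of the nine torsor variables must carry its own complex exponent, so what is needed is the six-shift ``parabolic Eisenstein series'' $\sum_{a_1n_1m_1+a_2n_2m_2+a_3n_3m_3=0}\prod_i |n_i|^{-\alpha_i}|m_i|^{-\beta_i}$ of Lemma~\ref{ml}, with its full polar structure as a function of the shifts, not a single asymptotic in fixed ranges.

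Second, and more seriously, the ``decompose into boxes and reassemble'' step is precisely what the paper argues does not work directly: the main contribution comes from hyperbolic spikes where the variables are highly unbalanced, and summing $O((\log B)^{9})$ dyadic boxes with individually power-saving errors neither automatically produces a degree-$4$ polynomial in $\log B$ with a power-saving error nor identifies the leading coefficient. In the paper the polynomial is extracted by a multi-variable contour-shifting argument: after Mellin-transforming the smooth cutoffs one has a ninefold integral, one splits $\mathcal A=\mathcal M+\mathcal E$ via Lemma~\ref{ml}, and the degree-$4$ leading coefficient arises as the sum $\tfrac1{16}-\tfrac7{72}-\tfrac1{48}+\sum_{\bepsilon}(\cdots)=\tfrac1{48}$ of residues at many distinct poles over all eight sign patterns $\bepsilon\in\{\pm1\}^3$ (Sections~\ref{wq}--\ref{wqqq}); even the constant term of $P_1$ receives a contribution of size $B$ from the ``error'' part $\mathcal E$, which a black-box application of~\cite{Sandro1} would miss. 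Your proposal treats this extraction as packaged bookkeeping, whereas it is the bulk of the paper; conversely the torsor descent and the treatment of the accumulating locus, which you flag as the main obstacles, occupy only Section~\ref{par} and a few lines. The heuristic for the factor $\tfrac1{144}$ (``$2^3\cdot 3!=48$-type symmetry factors'') also does not match how it actually arises, namely as $\tfrac1{48}\cdot\tfrac13$ from the residue computation combined with the evaluation of the fourfold inner integral in the final lemma.
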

The work of~\cite{BBS} shows that $\frac{\mathfrak S_1 \cdot \mathcal I}{144}$ coincides with Peyre's prediction for this variety and so Theorem~\ref{mtt1} gives Conjecture \ref{manin2} for $\widetilde{W}_3$.

Finally, a third interpretation of~\eqref{eq} is as the singular subvariety $\widehat{W}_n$ of $\left(\mathbb{P}^1\right)^n$ with multihomogeneous coordinates $[x_1:y_1],\dots,[x_n:y_n]$. The only record of study of an analogous equation is from La Bret\`eche \cite{Br} but with a non anticanonical height function. An anticanonical height is given in this setting by
$$
\widehat{H}\left(  [x_1:y_1],\dots,[x_n:y_n]\right)=\prod_{i=1}^n \max\{|x_i|,|y_i|\}.
$$
We prove the two following theorems, which, combined, gives that Conjecture \ref{manin2} holds for $\widehat{W}_3$.
\begin{theo}
Let $\widehat{U}$ be the Zariski open of $\widehat{W}_3$ defined by the condition $y_1y_2y_3 \neq 0$. Then there exist $\xi_2>0$ and a polynomial $P_2$ of degree 3 such that
$$
N_{\widehat{W}_3,\widehat{H}}(B):=\#\left\{\left([x_1:y_1],[x_2:y_2],[x_3:y_3]\right) \in \widehat{U}(\mathbb{Q}) \mid \widehat{H}(\mathbf{x},\mathbf{y}) \leqslant B\right\}=BP_2(\log(B))+O\left(B^{1-\xi_2}\right).
$$
The leading coefficient of $P_2$ 
is equal to $\frac{\mathfrak S_2 \cdot \mathcal I}{144}$, where $\mathcal I$ is as in Theorem~\ref{mtt1} and
\est{
\mathfrak S_2&:=\prod_{p}\pr{1-\frac1p}^4 \pr{1+ \frac4p + \frac1{p^2}}.\\
}
\label{2}
\end{theo}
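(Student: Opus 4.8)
The plan is to count points on $\widehat W_3$ by descending to the universal torsor, exactly as in the proof of Theorem~\ref{mtt1}, and then to recognise that the resulting lattice‑point count is essentially a specialisation of the same divisor‑type sum already estimated via the results of~\cite{Sandro2}. Concretely, a point of $\widehat U(\Q)$ is represented by coprime pairs $(x_i,y_i)$ with $y_i\neq0$, and after clearing common factors and using the $\mathbb{P}^1$ structure in each coordinate one writes the equation $x_1y_2y_3+x_2y_1y_3+x_3y_1y_2=0$ as a constraint on the six integers; the height condition becomes $\prod_{i=1}^3\max\{|x_i|,|y_i|\}\le B$. First I would carry out the torsor parametrisation: introduce the auxiliary variables encoding the pairwise gcd's of the $y_i$ (and of the $x_i$ with the complementary $y_j$'s), obtaining a bijection between $\widehat U(\Q)$‑points of height $\le B$ and integer tuples on the universal torsor subject to coprimality conditions and a single linear equation, weighted by nothing more than these coprimality conditions. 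This is the analogue of the torsor computation underlying Theorem~\ref{mtt1}, with the product height replacing $\widetilde H$, so the Picard rank drops from $5$ to $4$ and one expects a degree‑$3$ polynomial.

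Next I would solve the linear equation $x_1y_2y_3+x_2y_1y_3+x_3y_1y_2=0$ for one variable, say $x_3$, which forces divisibility conditions $y_1y_2\mid x_1y_2y_3+x_2y_1y_3$, i.e. after extracting the relevant gcd's one is left with summing a divisor‑type function over a region cut out by the height. The key analytic input is the asymptotic evaluation, due to~\cite{Sandro2}, of sums of the shape $\sum \mathbf 1_{(\cdot,\cdot)=1}\,\Delta(\dots)$ with power‑saving error, where the relevant ``$\Delta$'' is precisely the correlation of the divisor function that appears once the $y$‑variables are summed. Applying this uniformly in the remaining parameters and then executing the outer summations (Möbius inversion to remove the coprimality conditions, followed by summation over the torsor variables against the height constraint) produces the main term $B\,P_2(\log B)$ together with an error $O(B^{1-\xi_2})$; the exponent $\xi_2$ is inherited from the power‑saving in~\cite{Sandro2} after losing a little in the parameter sums.

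The arithmetic factor $\mathfrak S_2=\prod_p(1-\tfrac1p)^4\bigl(1+\tfrac4p+\tfrac1{p^2}\bigr)$ emerges as the product of local densities: at each prime $p$ one counts solutions of the equation modulo powers of $p$ subject to the coprimality pattern imposed by the torsor, and the local factor $1+4/p+1/p^2$ (against the convergence factor $(1-1/p)^4$, one for each of the four torsor ``divisor'' variables that survive) is the $p$‑adic density of the torsor equation; this is a routine but essential bookkeeping check. The archimedean density is the same integral $\mathcal I=\pi^2+24\log2-3$ as in Theorem~\ref{mtt1}, because the region $\{\frac{x_1}{y_1}+\frac{x_2}{y_2}\in[0,1/|z|]\}$ is exactly the shape that the product height carves out after the same change of variables; one only has to verify that the normalising constant is again $1/144$.

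The main obstacle will be the descent/torsor bookkeeping rather than the analysis: one must identify the correct coprimality conditions so that, after Möbius inversion, the inner sum is genuinely of the form handled by~\cite{Sandro2} \emph{uniformly} in all the auxiliary moduli, and then control the tail of the parameter summation well enough to preserve a power saving. A secondary difficulty is checking that the Zariski‑open set $\widehat U$ (defined by $y_1y_2y_3\neq0$) differs from the full torsor count only by an accumulating subset of strictly smaller order, so that removing the locus $y_1y_2y_3=0$ does not affect the leading polynomial; this parallels the treatment of the boundary strata in~\cite{BBS} and in the proof of Theorem~\ref{mtt1}, and I would reuse that argument verbatim with the product height.
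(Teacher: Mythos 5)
Your torsor descent is the right first step and matches the paper (after Lemma~\ref{lemmefacto} one reduces to counting $x_1z_1+x_2z_2+x_3z_3=0$ with the coprimality conditions~\eqref{cop_cond} and the height $\prod_i\max\{|x_i|,\prod_h z_h^{\epsilon_i(h)}\}\leqslant B$), but the analytic core of your plan is not the paper's and, as described, has a genuine gap. You propose to solve the equation for one variable, reduce to ``summing a divisor-type function over a region cut out by the height'', and then invoke~\cite{Sandro2} as an asymptotic with power saving, finishing by ``executing the outer summations''. What the paper actually needs from~\cite{Sandro2} is Lemma~\ref{ml}: the \emph{meromorphic continuation and polar structure} of the multiple Dirichlet series $\mathcal A_{\ba}(\balpha,\bbeta)$ attached to $a_1n_1m_1+a_2n_2m_2+a_3n_3m_3=0$, with all three products carrying divisor-type weights. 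The height condition is a product of three maxima, i.e.\ eight simultaneous inequalities indexed by $I\subseteq\{1,2,3\}$; the paper smooths each with $F_\delta^{\pm}$, takes eight Mellin transforms, recognises the summed series as $\zeta(\cdot)\,\mathcal A_{\ba}(\balpha-\xi,\balpha+\xi)$, and then extracts $BP(\log B)$ through a long sequence of contour shifts and residue computations (the four cases $k=0,\dots,3$ of $\mathcal M_{\ba}$, the quadruple pole at $\xi=0$, and the six-fold residue integral evaluated in Lemma~\ref{integral} to get $\tfrac1{144}\mathcal I_{\ba}\mathfrak S'_{\ba}$). Your plan never explains how the eight-fold height constraint is to be handled, and the paper explicitly warns that box-type asymptotics do not apply here because the main contribution comes from hyperbolic spikes; so ``applying the asymptotic uniformly and summing'' is precisely the step that does not go through without the Mellin/residue machinery (or an equivalent hyperbola-method decomposition, which the paper argues would be no simpler).

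Two further points. First, your treatment of the boundary is off: the locus $y_1y_2y_3=0$ is excluded by the definition of $\widehat U$, so there is nothing to remove there, whereas the locus $x_1x_2x_3=0$ lies \emph{inside} $\widehat U$ and contributes $\tfrac{36}{\pi^2}B+O(B^{1/2}\log B)$ --- a full order-$B$ term entering the constant coefficient of $P_2$, not an ``accumulating subset of strictly smaller order''; it must be computed separately (as in~\eqref{fs2}), since the torsor analysis assumes $x_i\neq0$. Second, in the proof itself $\mathfrak S_2$ does not appear as a product of local densities of the torsor equation; it arises by folding the singular series $S_{\ba}$ of Lemma~\ref{ml} with the M\"obius inversion data removing~\eqref{cop_cond}, and the identity $\mathfrak S^*_{(1,1,1)}=\mathfrak S_2$ is a separate Euler-product computation (Lemma~\ref{euler}); the identification with $p$-adic densities is the content of Theorem~\ref{3}, not of the counting argument. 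These are fixable, but together with the missing main-term extraction they mean the proposal is an outline of the right starting point rather than a proof.
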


\begin{theo}
The variety $\widehat{W}_3$ is isomorphic to a del Pezzo surface of degree 6 with singularity type~$\mathbf{A}_1$ and 3 lines over $\mathbb{Q}$ and the leading constant of the polynomial $P_2$ in Theorem \ref{2} agrees with Peyre's prediction.
\label{3}
\end{theo}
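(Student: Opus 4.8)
I would prove the two assertions of the statement in turn, the first purely geometric and the second a verification with Peyre's constant; write $\widetilde S\to\widehat W_3$ for the minimal desingularisation. \textit{Identification of $\widehat W_3$.} Since $\widehat W_3$ is the zero locus of a section of $\mathcal O(1,1,1)$ on $(\mathbb P^1)^3$, adjunction gives $\omega_{\widehat W_3}^{-1}=\mathcal O(1,1,1)|_{\widehat W_3}$, which is ample; hence $\widehat W_3$ is a Gorenstein del Pezzo surface of degree $(\omega_{\widehat W_3}^{-1})^2=\mathcal O(1,1,1)^3=6$. Inspecting the affine charts $x_i=1$ one checks that any point of $\widehat W_3$ with some $y_i=0$ has at least two of the $y_i$ vanishing, so $\{y_1y_2y_3=0\}$ is exactly the union of the three lines $\ell_{ij}=\{y_i=y_j=0\}$, and that the unique singular point is $P_0=([1:0],[1:0],[1:0])$, where the local equation $y_1y_2+y_1y_3+y_2y_3=0$ has nondegenerate quadratic tangent cone, so $P_0$ is an ordinary double point of type $\mathbf A_1$, through which all three lines $\ell_{ij}$ pass. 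To see that $\widehat W_3$ is \textit{the} such surface I would exhibit the birational map $\varphi\colon\mathbb P^2\dashrightarrow(\mathbb P^1)^3$, $[t_1:t_2:u]\mapsto\bigl([t_1:u],[t_2:u],[-(t_1+t_2):u]\bigr)$, whose image is $\widehat W_3$ and whose base scheme consists of the three distinct collinear $\mathbb Q$-points $q_1=[0:1:0]$, $q_2=[1:0:0]$, $q_3=[1:-1:0]$ on the line $\{u=0\}$; blowing these up resolves $\varphi$ and identifies $\widetilde S$ with $\mathrm{Bl}_{q_1,q_2,q_3}\mathbb P^2$, with the strict transform of $\{u=0\}$ the $(-2)$-curve $L$ contracted to $P_0$ and the exceptional curves $E_1,E_2,E_3$ mapping isomorphically onto $\ell_{23},\ell_{13},\ell_{12}$. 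Since all of this is defined over $\mathbb Q$, $\widehat W_3$ is $\mathbb Q$-isomorphic to the split — hence unique — singular del Pezzo surface of degree $6$ of singularity type $\mathbf A_1$ with $3$ lines.

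\textit{Peyre's constant.} I would apply Peyre's conjecture to the almost Fano model $\widetilde S$, for which $\rho:=\operatorname{rank}\operatorname{Pic}(\widetilde S)=4$ (consistent with $\deg P_2=\rho-1=3$), so the predicted leading constant is $c=\alpha(\widetilde S)\,\beta(\widetilde S)\,\tau_{\widehat H}(\widetilde S)$. The negative curves $E_1,E_2,E_3$ and $L=H-E_1-E_2-E_3$ are individually defined over $\mathbb Q$ and form a $\mathbb Z$-basis of $\operatorname{Pic}(\widetilde S)$, so the Galois action is trivial, giving $\beta(\widetilde S)=\#H^1(\mathbb Q,\operatorname{Pic}(\widetilde S_{\overline{\mathbb Q}}))=1$ and $L(s,\operatorname{Pic})=\zeta(s)^4$. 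The effective cone $\Lambda_{\mathrm{eff}}(\widetilde S)$ is the simplicial cone on $E_1,E_2,E_3,L$ (the only negative curves), and in this basis $\omega_{\widetilde S}^{-1}=3H-E_1-E_2-E_3=2E_1+2E_2+2E_3+3L$, whence $\alpha(\widetilde S)=\frac{1}{4!}\cdot\frac{1}{2\cdot2\cdot2\cdot3}=\frac{1}{576}$. For the Tamagawa number, $\widetilde S=\mathrm{Bl}_{q_1,q_2,q_3}\mathbb P^2$ has good reduction at every prime with $\#\widetilde S(\mathbb F_p)=\#\mathbb P^2(\mathbb F_p)+3p=p^2+4p+1$, so the finite part $\prod_p(1-p^{-1})^4\,p^{-2}\,\#\widetilde S(\mathbb F_p)$ equals exactly $\mathfrak S_2$; and the archimedean density $\tau_\infty$ (the integral over $\widehat W_3(\mathbb R)$ of the local anticanonical measure determined by $\widehat H=\prod_i\max\{\lvert x_i\rvert,\lvert y_i\rvert\}$) I would evaluate by pulling that measure back through $(t_1,t_2)=(x_1/y_1,x_2/y_2)$, $t_3=-(t_1+t_2)$ and unwinding the sup-norm metric; a change of variables should bring it to $\tau_\infty=4\mathcal I$ with $\mathcal I$ as in Theorem~\ref{mtt1} (the factor $4=2^2$ being the affine-chart normalisation on the two free $\mathbb P^1$-factors). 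Assembling, $c=\tfrac{1}{576}\cdot1\cdot4\mathcal I\cdot\mathfrak S_2=\tfrac{\mathfrak S_2\mathcal I}{144}$, which is exactly the leading coefficient of $P_2$ supplied by Theorem~\ref{2}.

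The geometric step is formal. The main obstacle will be the archimedean computation: normalising the local Tamagawa measure at $\infty$ compatibly with the sup-norm height, pushing it onto the two-dimensional parameter space, controlling the behaviour along the three lines and at $P_0$, and checking that it collapses to exactly $4\mathcal I$ (equivalently, that $\alpha(\widetilde S)\,\tau_\infty\prod_p\tau_p$ reduces to $\mathfrak S_2\mathcal I/144$). A secondary point is to fix the correct formulation of Peyre's conjecture and of the convergence factors for the singular surface; the parallel computation carried out for $\widetilde W_3$ in \cite{BBS}, in which the same integral $\mathcal I$ and the same denominator $144$ appear, provides a useful consistency check.
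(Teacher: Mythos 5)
Your geometric identification is sound and takes a different (arguably cleaner) route than the paper: you realise $\widehat W_3$ as the image of $\mathbb P^2$ blown up at three collinear rational points, whereas the paper exhibits an explicit pair of inverse maps between $\widehat W_3$ and the anticanonically embedded model in $\mathbb P^6$ cut out by nine quadrics, quoting \cite{Tim} for the classification of that model. Either argument establishes the first assertion of the theorem, and your blow-up description has the advantage of making the Picard lattice, the negative curves and the effective cone transparent. Your computation of $\beta=1$ and of the finite densities (via point counts on the smooth model $\widetilde S$, versus the paper's count on the torsor $O\subseteq\mathbb A^7$) also agrees with the paper.

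There is, however, a genuine error in the assembly of Peyre's constant, hidden by two compensating factors of $4$. Your $\alpha(\widetilde S)=\tfrac1{4!}\cdot\tfrac1{2\cdot2\cdot2\cdot3}=\tfrac1{576}$ is the Euclidean volume of $\{x\in\Lambda_{\mathrm{eff}}^\vee:\langle\omega^{-1},x\rangle\leqslant1\}$, but Peyre's normalisation is $\alpha(V)=\frac1{(\rho-1)!}\int_{\Lambda_{\mathrm{eff}}^\vee}e^{-\langle\omega^{-1},x\rangle}\,{\rm d}x=\rho\cdot\mathrm{vol}\{\cdots\leqslant1\}$, which here gives $\tfrac{4}{576}=\tfrac1{144}$ (this is the value the paper quotes from Derenthal; one can check the normalisation on $\mathbb P^1\times\mathbb P^1$, where the correct $\alpha$ is $\tfrac14$, not $\tfrac18$). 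Having lost this factor of $4$, you then posit — without carrying out the computation you yourself identify as the main obstacle — that the archimedean density equals $4\mathcal I$, attributing the $4$ to an ``affine-chart normalisation''. No such factor exists: the chart $y_1y_2y_3\neq0$, parametrised by $(u_1,u_2)\in\mathbb R^2$ with $u_i=x_i/y_i$ and $u_3=-u_1-u_2$, already has full measure in $\widehat W_3(\mathbb R)$, and the anticanonical density there is exactly $\frac{{\rm d}u_1\,{\rm d}u_2}{\max(|u_1|,1)\max(|u_2|,1)\max(|u_1+u_2|,1)}$, whose integral is $\pi^2+24\log2-3=\mathcal I$ (not $4\mathcal I$); this is precisely the Leray-form computation the paper performs. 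So your final product $\tfrac1{576}\cdot4\mathcal I\cdot\mathfrak S_2$ equals the correct answer only because the two mistakes cancel, and the step you defer is exactly the one where the compensating factor was reverse-engineered. To repair the argument, use $\alpha=\tfrac1{144}$ and actually carry out the archimedean integral, showing it equals $\mathcal I$.
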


We remark that by Theorem~\ref{3} and~\cite{Dan} one has that $\widehat{W}_3$ is an equivariant compactification of $\mathbb{G}_a^2$. In particular Theorem \ref{2} follows from the more general work of Chambert-Loir and Tschinkel~\cite{CLT}. 

The purpose of giving a new independent proof of Theorem~\ref{2} is double. First, the method presented here uses a descent on the versal torsor and thus it is different from the method in~\cite{CLT} which relies on harmonic analysis techniques and the study of the height zeta function. To our knowledge this is the first time that a full asymptotic with a power-saving error term is obtained on this del Pezzo surface by means of a descent on the versal torsor. The best result using such a method can be found in \cite[Chapter 5]{Tim} were Browning obtains a statement somewhere in between Conjectures~\ref{manin1} and~\ref{manin2}.\\
\indent
Secondly, following the same approach for proving Theorem~\ref{mtt1} and~\ref{2} allows one to appreciate the difference in the structure of the main terms in these two cases, showing how the extraction of the main term in the first case becomes substantially harder as well as allowing the use the proof of Theorem~\ref{2} as a guide for that of Theorem~\ref{mtt1}.

We remark that a generalization of the methods of this paper and of~\cite{BB14} should be within reach and would allow us to obtain a full asymptotic with a power-saving error term for each of these three embeddings and for every $n\geqslant 4$. This will be the focus of future work. We conclude these comments by the following remark about Theorem \ref{2}.

\begin{remark}
We prove Theorem~\ref{2} for any $\xi_2<0.00228169\dots$ One can easily give an explicit power saving also in the case of Theorem~\ref{mtt1} as well as improving the allowed range for $\xi_2$, but in order to simplify the presentation we choose not to do so, since in any case the values obtained could be greatly improved by tailoring the methods of~\cite{Sandro2} to these specific problems.
\end{remark}

The proofs of Theorem~\ref{mtt1} and~\ref{2} roughly proceed as follows. We use the same unique factorization as in recent work of the second author \cite{Dest2} to parametrize the counting problem combined with recent work of the first author~\cite{Sandro2}. More precisely, by means of a descent on the versal torsor we can transform the problem of counting solutions to~\eqref{eq} to that of counting solutions to $a_1x_1z_1+a_2x_2z_3+a_3x_3z_3=0$ with some coprimality conditions, with certain restraints on the sizes of $x_i,z_j$ (depending on the height we had originally chosen), and with $a_1,a_2,a_3$ that can be thought of being very small. By~\cite{Sandro2} (see also~\cite{Sandro1}) we have the meromorphic continuation for the ``parabolic Eisenstein series''
\est{
\sum_{n_1,n_2,n_3,m_1,m_2,m_3 \in \mathbb{Z}_{> 0} \atop a_1m_1+a_2m_2+a_3m_3=0}\hspace{-1em}\frac{\tau_{\alpha_1,\beta_1}(m_1)\tau_{\alpha_2,\beta_2}(m_3)\tau_{\alpha_3,\beta_3}(m_3)}{(m_1m_2m_3)^s},\quad \Re(s)>\tfrac23-\min(\Re(\alpha_i),\Re(\beta_i))\ \ \forall i=1,2,3
}
where $\tau_{\alpha_i,\beta_i}(m)=\sum_{d_1d_2=m}d_1^{-\alpha_i} d_2^{-\beta_i}$ for $(\alpha_1,\alpha_2,\alpha_3), (\beta_1,\beta_2,\beta_3) \in \mathbb{C}^3$. Using this we obtain that in both cases is given by a certain multiple complex integral of the products of $\Gamma$ and $\zeta$ functions, up to a power saving error term. The main part of the paper is then devoted to the use of complex analytic methods to extract the main terms from such integrals. 
{This process is reminiscent of the work~\cite{Br2001} of La Bret\`eche, where he showed how to deduce asymptotic formulas for generic  arithmetic averages from the analytic properties of their associated Dirichlet series. However, his work is not directly applicable to our case. Indeed, in his setting all variables are summed in boxes, whereas in our case the main action happens at complicated hyperbolic spikes. Of course, one could use La Bret\`eche's work in combination with some suitable version of the hyperbola method, but in fact this would not simplify substantially the problem and would still eventually require arithmetic and complex-analytic computations essentially equivalent to ours. For this reason, we prefered to approach the relevant sums in a more direct way.
}

The paper is structured as follows. First, in Section~\ref{par} we reparametrize the solutions to~\eqref{eq} using a descent on the versal torsor. In Section~\ref{geom} we prove Theorem~\ref{3}. In Section~\ref{parab} we state the main Lemma on the parabolic Eisenstein series and a smoothing lemma useful to avoid problems of sharp cut-offs. Then, in Section~\ref{nocop},~\ref{arith} and~\ref{pmt} we prove Theorems~\ref{2} and~\ref{mtt1} in three steps of increasing difficulties: first Theorem~\ref{2} without the aforementioned coprimality conditions, then we include these conditions and finally we prove Theorem~\ref{mtt1}.

\section*{Notations}
We use the vector notation $\bv=(v_1,\dots,v_k)$ where the dimension is clear from the context. Also, given a vector $\bv\in\C^k$ and $c\in\C$ with $\bv+c$ we mean $(v_1+c,\dots,v_k+c)$. With $\iint$ we indicate the integration with respect to several variables, whose number is clear from the context.
For $c\in\R$, with $\int_{(c)}$ we indicate that the integral is taken along the vertical line from $c-i\infty$ to $c+i\infty$. Also, we indicate with $c_{z}$ the line of integration corresponding to the variable $z$. 
Given, $a_1,\dots,a_k\in\Z$ we indicate the GCD and the HCF of $a_1,\dots,a_k$ by $(a_1,\dots,a_k)$ and $[a_1,\dots, a_k]$ respectively.

We indicate the real and imaginary part of a complex number $s\in\C$ by $\sigma$ and $t$ respectively, so that $s=\sigma+i t$. Also, $\eps$ will denote an arbitrary small and positive real number, which is assumed to be sufficiently small and upon which all bounds are allowed to depend.
{Finally, in Section~\ref{pmt}, we denote by $C_1,C_2,C_3,\dots$ a sequence of fixed positive real numbers.}
\section{The descent on the versal torsor}\label{par}
For $n \geqslant 2$, we let $N=2^n-1$. For every $h \in \{ 1,\dots,N\}$, we denote its binary expansion by
$$
h=\sum_{1 \leqslant j \leqslant n} \epsilon_j(h)2^{j-1},
$$ 
with $\epsilon_j(h) \in \{0,1\}$. We will let $s(h)=\sum_{j \geqslant 1} \epsilon_j(h)$ be the sum of the bits of $h$. We will say that a integer $h$ is dominated by $\ell$ if for every $j \in \mathbb{N}$, we have $\epsilon_j(h) \leqslant \epsilon_j(\ell)$. We will use the notation $h \preceq \ell$ to indicate that $h$ is dominated by $\ell$. We will say that an $N-$tuple $(z_{1},\dots,z_N)$ is reduced if $\gcd(z_h,z_{\ell})=1$ when $h \not\preceq \ell$ and $\ell \not\preceq h$.\\
\indent
We give the following lemma which gives a unique factorization for the variables $y_i$ inspired by \cite{5,4} and \cite{Br} and which will be very useful to parametrize rational solutions of~\eqref{eq}.

\begin{lemma}[\textbf{\cite[\textbf{Lemme 1}]{Dest2}}]
There is a one-to-one correspondence between the $n-$tuples of non negative integers $(y_i)_{1 \leqslant i \leqslant n}$ and the reduced $N-$tuples $(z_h)_{1 \leqslant h \leqslant N}$ of non negative integers such that
$$
\forall j \in \llbracket 1,n \rrbracket, \quad y_j=\prod_{1 \leqslant h \leqslant N} z_h^{\epsilon_j(h)} \quad \mbox{and} \quad [y_1,\dots,y_n]=\prod_{1 \leqslant h \leqslant N} z_h.
$$
\label{lemmefacto}
\end{lemma}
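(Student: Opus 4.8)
\textbf{Proof strategy for Lemma~\ref{lemmefacto}.} The plan is to prove the two directions of the bijection separately and to verify that the constructions are mutually inverse. First I would describe the map from $n$-tuples to $N$-tuples. Given $(y_1,\dots,y_n)$ of nonnegative integers, one defines, for each $h \in \{1,\dots,N\}$, the quantity $z_h$ as a suitable gcd-type expression extracting the part of the $y_j$'s that is ``supported exactly on the coordinates dictated by $h$''; concretely one sets
\est{
z_h := \frac{\prod_{\ell \,:\, h \preceq \ell} \gcd\big( (y_j)_{j \,:\, \epsilon_j(\ell)=1} \big)^{\mu(\ell/h)}}{\cdots}
}
— more cleanly, by induction on $s(h)$ descending, letting $z_h$ be $\gcd\big((y_j)_{j\,:\,\epsilon_j(h)=1}\big)$ divided by $\prod_{h \precneqq \ell} z_\ell$. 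One then checks this is a well-defined nonnegative integer (the division is exact because the $z_\ell$ for $\ell \succneqq h$ divide the relevant gcd), that the resulting $N$-tuple is reduced, and that $y_j = \prod_{h} z_h^{\epsilon_j(h)}$ holds, the latter by a Möbius-type inversion/telescoping on the lattice of subsets ordered by $\preceq$.

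For the reverse direction, starting from a reduced $N$-tuple $(z_h)$ one simply sets $y_j := \prod_{1\le h\le N} z_h^{\epsilon_j(h)}$; these are nonnegative integers, and one must verify that $[y_1,\dots,y_n] = \prod_h z_h$ and that this $(y_j)$ maps back to $(z_h)$ under the first construction. The key arithmetic input is that for a reduced tuple the $z_h$ are pairwise coprime whenever neither index dominates the other, so that the $p$-adic valuation of $y_j$ is $\sum_{h \preceq \{j\}\text{-compatible}} \dots$; working prime by prime reduces everything to a statement about a single prime $p$, where each $z_h$ contributes $v_p(z_h)$ and the reducedness condition forces, for each prime, the nonzero valuations to occur only among a chain of indices $h$, making both $y_j = \prod_h z_h^{\epsilon_j(h)}$ and $[y_1,\dots,y_n]=\prod_h z_h$ transparent.

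I would then close the loop: applying the forward map to $y_j = \prod_h z_h^{\epsilon_j(h)}$ recovers the original $z_h$ (using the descending induction on $s(h)$ and coprimality), and applying the reverse map to the $z_h$ extracted from a given $(y_j)$ recovers the original $y_j$ (using the product formula established above). The main obstacle is the bookkeeping in the forward construction: one must show simultaneously that the inductively-defined $z_h$ are integers, that they form a reduced tuple, and that they reconstruct the $y_j$, and these three facts are intertwined through the combinatorics of the domination partial order. Reducing to a fixed prime $p$ and analyzing the poset of indices $h$ with $v_p(z_h) > 0$ — showing it must be totally ordered by $\preceq$ — is the cleanest way to control this, and is the step I expect to require the most care. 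Since this lemma is quoted from~\cite{Dest2}, I would in practice cite that reference for the full details and only sketch the prime-by-prime argument here.
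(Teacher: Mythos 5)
Your proposal is correct in outline, but note that the paper itself offers no proof of this lemma at all: it is imported verbatim as \cite[Lemme 1]{Dest2}, so the "paper's approach" is simply the citation you yourself fall back on at the end. Your sketch (descending induction on $s(h)$ defining $z_h$ as $\gcd\big((y_j)_{j:\epsilon_j(h)=1}\big)$ divided by $\prod_{h\precneqq \ell} z_\ell$, with exactness, reducedness and uniqueness all checked prime by prime via the observation that reducedness forces the indices with positive $p$-adic valuation to form a chain under $\preceq$) is the standard argument and is sound; the only phrasing to tighten is that exactness of the division requires the \emph{product} $\prod_{h\precneqq\ell}z_\ell$ to divide the gcd, which is precisely the valuation inequality your prime-by-prime reduction delivers, not a consequence of each $z_\ell$ dividing it separately.
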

\subsection{The case of ${\widehat{W}}_n$}
\label{ut1}
We want to estimate, for $B \geqslant 1$, the quantity
$$
N_{\widehat{W}_n,\widehat{H}}(B):=\#\left\{\left([x_1:y_1],[x_2:y_2],[x_3:y_3]\right) \in \widehat{U}(\mathbb{Q}) \mid \widehat{H}(\mathbf{x},\mathbf{y}) \leqslant B\right\}.
$$
Clearly, we have
$$
N_{\widehat{W}_n,\widehat{H}}(B)=\#\left\{(\mathbf{x},\mathbf{y})\in \mathbb{Z}^n \times \mathbb{Z}_{>0}^n  \hspace{1mm}:\hspace{1mm} 
\begin{array}{l}
\hspace{5em}\widehat{H}(\mathbf{x},\mathbf{y}) \leqslant B\\
(\mathbf{x},\mathbf{y}) \mbox{ satisfies }~\eqref{eq},\
\gcd(x_i,y_i)=1
\end{array}
\right\}.
$$
Using Lemma \ref{lemmefacto} the equation~\eqref{eq} can be rewritten as
\begin{equation}
\sum_{j=1}^n d_jx_j=0
\quad
\mbox{with}
\quad 
 d_i=\prod_{1 \leqslant h \leqslant N} z_h^{1-\epsilon_i(h)}\quad \forall i \in \llbracket 1,n \rrbracket.
\label{torsor}
\end{equation}
We then obtain the divisibility relation $z_{2^{j-1}} \mid x_j$ for every $j \in \{ 1,\dots,n \}$. Since we have the conditions $\gcd(x_j,y_j)=1$ and $z_{2^{j-1}} \mid y_j$, we can deduce that for every $j \in \{ 1,\dots,n \}$, $z_{2^{j-1}}=1$. Finally, we have
$$
N_{\widehat{W}_n,\widehat{H}}(B)=\#\left\{(\mathbf{x},\mathbf{z})\in \mathbb{Z}^n \times \mathbb{Z}_{>0}^{N-n}  \hspace{1mm}:\hspace{1mm} 
\begin{array}{l}
\displaystyle\prod_{i=1}^n \max\bigg\{|x_i|,\prod_{1 \leqslant h \leqslant N} |z_h|^{\epsilon_i(h)}\bigg\} \leqslant B\\[3mm]
(z_h)_{1 \leqslant h \leqslant N} \mbox{ reduced}, \quad
\displaystyle\sum_{i=1}^n x_id_i=0
\end{array}
\right\}.
$$
In the case $n=3$, renaming for simplicity $z_6$ by $z_1$, $z_5$ by $z_2$ and $z_7$ by $z_4$, one gets the following expression for $N_{\widehat{W}_3,\widehat{H}}(B)$:
\es{\label{expwh}
&N_{\widehat{W}_3,\widehat{H}}(B)=\\
&\hspace{2em}\#\left\{\left(\mathbf{x},\mathbf{z}\right) \in \mathbb{Z}^{3}\times \mathbb{Z}_{>0}^4 : \begin{array}{l}
 \gcd(x_1,z_2z_3z_4)= \gcd(x_2,z_1z_3z_4)= \gcd(x_3,z_1z_2z_4)=1\\
 \gcd(z_1,z_2)=\gcd(z_1,z_3)=\gcd(z_2,z_3)=1\\
 \max \{|x_1|,z_2z_3z_4\} \times \max \{|x_2|,z_1z_3z_4\} \times \max \{|x_3|,z_1z_2z_4\} \leqslant B\\
 x_1z_1+x_2z_2+x_3z_3=0
 \end{array}
\right\}.
}
As explained in Section \ref{geom}, the subvariety of $\mathbb{A}^7$ given by the equation $x_1z_1+x_2z_2+x_3z_3=0$ along with the conditions
$$
\gcd(x_1,z_2z_3z_4)= \gcd(x_2,z_1z_3z_4)= \gcd(x_3,z_1z_2z_4)=1
$$
and
$$
 \gcd(z_1,z_2)=\gcd(z_1,z_3)=\gcd(z_2,z_3)=1
$$
is the versal torsor of $\widehat{W}_3$ and hence, through this parametrization, we just performed a descent on the versal torsor of $\widehat{W}_3$.

\subsection{The case of $\widetilde{W}_n$}\label{ut2}

We now want to estimate, for $B \geqslant 1$, the quantity
$$
N_{\widetilde{W}_n,\widetilde{H}}(B)=\frac{1}{4}\#\left\{(\mathbf{x},\mathbf{y})\in \mathbb{Z}^n \times \mathbb{Z}_{\neq 0}^n  \hspace{1mm}:\hspace{1mm} 
\begin{array}{l}
\widetilde{H}(\mathbf{x},\mathbf{y}) \leqslant B\\
(\mathbf{x},\mathbf{y}) \mbox{ satisfy }~\eqref{eq}\\
\gcd(x_1,\dots,x_n)=\gcd(y_1,\dots,y_n)=1
\end{array}
\right\}.
$$
Clearly, we have
$$
N_{\widetilde{W}_n,\widetilde{H}}(B)=2^{n-2}\#\left\{(\mathbf{x},\mathbf{y})\in \mathbb{Z}^n \times \mathbb{Z}_{>0}^n  \hspace{1mm}:\hspace{1mm} 
\begin{array}{l}
\widetilde{H}(\mathbf{x},\mathbf{y}) \leqslant B\\
(\mathbf{x},\mathbf{y}) \mbox{ satisfy }~\eqref{eq}\\
\gcd(x_1,\dots,x_n)=\gcd(y_1,\dots,y_n)=1
\end{array}
\right\}.
$$
We can still rewrite the equation~\eqref{eq} as (\ref{torsor}) using Lemma \ref{lemmefacto} but we can no longer deduce that $z_{2^{j-1}}=1$. We only have the divisibility relation $z_{2^{j-1}} \mid x_j$. However, we have $z_N=1$.\\
\indent
Finally, one gets
$$
N_{\widetilde{W}_3,\widetilde{H}}(B)=2\,\#\left\{(\mathbf{x},\mathbf{z})\in \mathbb{Z}^n \times \mathbb{Z}_{>0}^{N-1}  \hspace{1mm}:\hspace{1mm} 
\begin{array}{l}
\displaystyle\max_{1\leqslant i \leqslant n}|x_i|^{n-1}\max_{1\leqslant i \leqslant n}\left|\prod_{1 \leqslant h \leqslant N-1} z_h^{\epsilon_i(h)}\right| \leqslant B\\[2mm]
\displaystyle\sum_{i=1}^n x_id_i=0\\[2mm]
\gcd(x_1,\dots,x_n)=1, \hspace{1mm} (z_h)_{1 \leqslant h \leqslant N-1} \mbox{ reduced}
\end{array}
\right\}
$$
and particularly, in the case $n=3$, we obtain
\begin{align}
N_{\widetilde{W}_3,\widetilde{H}}(B)&=2\#\left\{(\mathbf{x},\mathbf{z})\in \mathbb{Z}^3 \times \mathbb{Z}_{>0}^6  \hspace{1mm}:\hspace{1mm} 
\begin{array}{l}
\displaystyle\max_{1\leqslant i \leqslant 3}|x_i|^{2}\max\left\{z_1z_3z_5,z_2z_3z_6,z_4z_5z_6\right\} \leqslant B\\[2mm]
x_1z_2z_4z_6+x_2z_1z_4z_5+x_3z_1z_2z_3=0\\[2mm]
\gcd(x_1,x_2,x_3)=1, \hspace{1mm} (z_h)_{1 \leqslant h \leqslant 6} \mbox{ reduced}
\end{array}
\right\}\notag\\
&=2\#\left\{(\mathbf{x}',\mathbf{z})\in \mathbb{Z}^3 \times \mathbb{Z}_{>0}^6  \hspace{1mm}:\hspace{1mm} 
\begin{array}{l}
\displaystyle\max\left\{z_{1}|x'_1|,z_2|x'_2|,z_4|x'_3|\right\}^{2}\max\left\{z_1z_3z_5,z_2z_3z_6,z_4z_5z_6\right\} \leqslant B\\[2mm]
x'_1z_6+x'_2z_5+x'_3z_3=0\\[2mm]
\gcd(z_1x'_1,z_2x'_2,z_4x'_3)=1, \hspace{1mm} (z_h)_{1 \leqslant h \leqslant 6} \mbox{ reduced}
\end{array}
\right\}.\label{ssf}
\end{align}
It is easily seen that the coprimality conditions given by
$
\gcd(z_1x'_1,z_2x'_2,z_4x'_3)=1,$ and $(z_h)_{1 \leqslant h \leqslant 6}$ reduced are equivalent to
$$
\gcd(x'_1,x'_2,x'_3)=\gcd(x'_1,x'_2,z_3)=\gcd(x'_1,z_5,x'_3)=\gcd(z_6,x'_2,x'_3)=1
$$
together with the fact that $(z_h)_{1 \leqslant h \leqslant 6}$ is reduced. It then follows from \cite{BB} that the subvariety of $\mathbb{A}^9$ given by the equation $x'_1z_6+x'_2z_5+x'_3z_3=0$ along with the conditions
$$
\gcd(x'_1,x'_2,x'_3)=\gcd(x'_1,x'_2,z_3)=\gcd(x'_1,z_5,x'_3)=\gcd(z_6,x'_2,x'_3)=1
$$
and
$$
\begin{aligned}
\gcd(z_1,z_2)&=\gcd(z_1,z_4)=\gcd(z_1,z_6)=\gcd(z_2,z_4)=\gcd(z_2,z_5)\\
&=\gcd(z_3,z_4)=\gcd(z_3,z_5)=\gcd(z_3,z_6)=\gcd(z_5,z_6)=1\\
\end{aligned}
$$
is the versal torsor of $\widetilde{W}_3$ and hence, through this parametrization, we just performed a descent on the versal torsor of $\widetilde{W}_3$.

\section{Geometry and the constant in the case $\mathbb{P}^1 \times \mathbb{P}^1 \times \mathbb{P}^1$}\label{geom}

We give in this section the proof of the Theorem \ref{3}. For example, \cite{Tim} yields that the surface $S \subseteq \mathbb{P}^6$ cut out by the following 9 quadrics
$$
\begin{aligned}
X_1^2-X_2X_4&=X_1X_5-X_3X_4=X_1X_3-X_2X_5=X_1X_6-X_3X_5\\
&=X_2X_6-X_3^2=X_4X_6-X_5^2=X_1^2-X_1X_4+X_5X_7\\
&=X_1^2-X_1X_2-X_3X_7=X_1X_3-X_1X_5+X_6X_7=0
\end{aligned}
$$
is a del Pezzo surface of degree 6 of singularity type $\mathbf{A}_1$ with three lines, the lines being given by
$$
X_1=X_2=X_3=X_5=X_6=0, \quad X_1=X_3=X_4=X_5=X_6=0
$$
and
$$
X_3=X_5=X_6=X_1-X_4=X_1-X_2=0.
$$
The maps $f:\widetilde{W}_3 \rightarrow S$ given by
$$
\left\{
\begin{array}{l}
X_1=-y_3x_1x_2\\
X_2=-x_1(x_2y_3+x_3y_2)\\
X_3=-y_2y_3x_1\\
X_4=-x_2(x_1y_3+x_3y_1)\\
X_5=y_1y_3x_2\\
X_6=y_1y_2y_3\\
X_7=x_1x_2x_3\\
\end{array}
\right.
$$
and $g:S \rightarrow \widetilde{W}_3$ given by
$$
g([X_1:\cdots:X_7])=\left([X_1:-X_5],[X_5:X_6],[X_7:-X_1]\right)
$$
are well defined and inverse from each other. Thus $\widetilde{W}_3 \cong S$ and is therefore a del Pezzo surface of degree 6 of singularity type $\mathbf{A}_1$ with three lines, the lines being given by $y_i=y_j=0$ for $1 \leqslant i \neq j \leqslant 3$. As mentioned in the introduction, it follows then from \cite{Dan} and from this isomorphism that $\widehat{W}_3$ is an equivariant compactification of $\mathbb{G}_a^2$ and Theorem \ref{2} can be derived from the more general work of Chambert-Loir and Tschinkel \cite{CLT}. However, the method presented here using a descent on the versal torsor is different from the method in \cite{CLT} and it is always interesting to unravel a different proof.\\
\indent
Let us denote by $\widetilde{W}_3^{\ast}$ the minimal desingularisation of $\widetilde{W}_3$. The fact that the variety $O \subseteq \mathbb{A}^7$ given by
$$
 x_1z_1+x_2z_2+x_3z_3=0
$$
with the coprimality conditions 
$$
\gcd(x_1,z_2z_3z_4)= \gcd(x_2,z_1z_3z_4)= \gcd(x_3,z_1z_2z_4)=1
$$
and
$$
 \gcd(z_1,z_2)=\gcd(z_1,z_3)=\gcd(z_2,z_3)=1
$$
is the versal torsor of $\widetilde{W}_3^{\ast}$ is a consequence of work of Derenthal \cite{De}.\\
\indent
To conclude, let us briefly justify why the leading constant of Theorem \ref{2}
$$
\frac{1}{144}\left(\pi^2+24\log(2)-3\right)\prod_p \left( 1-\frac{1}{p}\right)^4\left(1+\frac{4}{p}+\frac{1}{p^2}\right)
$$
agrees with Peyre's prediction.\\
\indent
First of all, the variety $\widetilde{W}_3$ being rational, we know that $\beta(\widetilde{W}_3^{\ast})=1$ and work from Derenthal \cite{De} immediately yields $\alpha(\widetilde{W}_3^{\ast})=\frac{1}{144}$. We now have that
$$
\omega_H\left(\widetilde{W}_3^{\ast}(\mathbb{A}_{\mathbb{Q}})\right)=\omega_{\infty}\prod_p\omega_p
$$
with $\omega_p$ and $\omega_{\infty}$ being respectively the $p$-adic and archimedean densities. It is now easy to get that
$$
\omega_p=\frac{\#O(\mathbb{F}_p)}{p^6}=\left( 1-\frac{1}{p}\right)^4\left(1+\frac{4}{p}+\frac{1}{p^2}\right)
$$
either by direct computation or by calling upon a more general result of Loughran \cite{Dan}. Turning to the archimedean density and reasoning like in \cite{Blomer2014} one gets that
$
\omega_{\infty}
$
is given by the archimedean density on the open subset $y_1\neq0$, $y_2\neq 0$ and $y_3 \neq 0$ of $\widetilde{W}_3$. This is the affine variety given by the equation
$$
u_1+u_2+u_3=0.
$$
Using a Leray form to parametrize in $u_3$, one finally obtains
$$
\omega_{\infty}=\int_{-\infty}^{+\infty}\int_{-\infty}^{+\infty}\frac{\mbox{d}u_1\mbox{d}u_2}{\max(|u_1|,1)\max(|u_2|,1)\max(|u_1+u_2|,1)}.
$$
An easy computation now yields 
$$
\int_{-\infty}^{+\infty}\int_{-\infty}^{+\infty}\frac{\mbox{d}u_1\mbox{d}u_2}{\max(|u_1|,1)\max(|u_2|,1)\max(|u_1+u_2|,1)}=\pi^2+24\log(2)-3
$$
which finally shows that the conjectural value of Peyre's constant is
$$
\alpha(\widetilde{W}_3^{\ast})\beta(\widetilde{W}_3^{\ast})\omega_H\left(\widetilde{W}_3^{\ast}(\mathbb{A}_{\mathbb{Q}})\right)=\frac{1}{144}\left(\pi^2+24\log(2)-3\right)\prod_p \left( 1-\frac{1}{p}\right)^4\left(1+\frac{4}{p}+\frac{1}{p^2}\right)
$$
and hence that the leading constant in Theorem \ref{2} agrees with Peyre's prediction.

\section{The parabolic Eisenstein series and smooth approximations}\label{parab}
We quote the following Lemma from~\cite{Sandro2}. 

\begin{lemma}\label{ml}
Let
\es{\label{A}
\mathcal A_{\ba}(\balpha,\bbeta):=\frac18\sum_{\substack{n_1,n_2,n_3,m_1,m_2,m_3\in\Z_{\neq0},\\ a_1n_1m_1+a_2n_2m_2+a_3n_3m_3=0}}\frac{1}{|n_1|^{\alpha_1}|m_1|^{\beta_1}|n_2|^{\alpha_2}|m_2|^{\beta_2}|n_3|^{\alpha_3}|m_3|^{\beta_3}},
}
where $\ba=(a_1,a_2,a_3)\in\Z_{\neq0}^3$ and $\balpha=(\alpha_1,\alpha_2,\alpha_3),\bbeta=(\beta_1,\beta_2,\beta_3)\in\C^3$. Then $\mathcal A_{\ba}(\balpha,\bbeta)$ converges absolutely if $\Re(\alpha_i),\Re(\beta_i)>\frac23$ for all $i=1,2,3$. Moreover for $\frac23+\eps<\Re(\alpha_i),\Re(\beta_i)\leqslant \frac{11}{12}$ it satisfies $\mathcal A_{\ba}(\balpha,\bbeta)\ll1$ and
\es{\label{decomposition}
\mathcal A_{\ba}(\balpha,\bbeta)=\mathcal M_{\ba}(\balpha,\bbeta)+\mathcal E_{\ba}(\balpha,\bbeta)
}
where
\est{
&\mathcal M_{\ba}(\balpha,\bbeta)= \sum_{\substack{\{\alpha_i^*,\beta_i^*\}=\{\alpha_i,\beta_i\}\\ \forall i\in\{1,2,3\}}}\frac{2\sqrt \pi\, S_{\ba}(\balpha^*,\bbeta^*)}{\alpha_1^*+\alpha_2^*+\alpha_3^*-2}\bigg(\prod_{i=1}^3\frac{\zeta(1-\alpha_i^*+\beta_i^*)}{|a_i|^{-\alpha_i^*+\frac{1+\alpha_1^*+\alpha_2^*+\alpha_3^*}{3}}}\frac{\Gamma(-\frac{\alpha_i^*}2+\frac{1+\alpha_1^*+\alpha_2^*+\alpha_3^*}{6})}{\Gamma(\frac{1+\alpha_i^*}2-\frac{1+\alpha_1^*+\alpha_2^*+\alpha_3^*}{6})}\bigg),
}
with 
\est{
S_{\ba}(\balpha^*,\bbeta^*):=\sum_{ \ell\geqslant1}\frac{(a_1,\ell)^{1-\alpha_1^*+\beta_1^*}(a_2,\ell)^{1-\alpha_1^*+\beta_2^*}(a_3,\ell)^{1-\alpha_3^*+\beta_3^*}}{\ell^{3-\sum_{i=1}^3(\alpha_i^*-\beta_i^*)}}\varphi(\ell),
}
and where $\mathcal E_{\ba}(\balpha,\bbeta)$ is an holomorphic function on
\es{\label{hre}
\Omega_\eps:=\left\{(\balpha,\bbeta)\in\C^6\mid  \Re(\alpha_i),\Re(\beta_i)\in[\tfrac5{12}+\eps,\tfrac{11}{12}-\eps]\ \forall i \in \{1,2,3\},\ \eta<\tfrac29-\eps\right\}
}
for all $\eps>0$ with $\eta:=\sum_{i=1}^3(|\Re(\alpha_i)-\tfrac23|+|\Re(\beta_i)-\tfrac23|)$. Moreover, for 
$(\balpha,\bbeta) \in\Omega_\eps$ one has
\es{\label{ml_bound}
\mathcal E_{\ba}(\balpha,\bbeta)\ll\bigg(\big(\max_{1 \leqslant i\leqslant 3}|a_i|\big)^{14}\Big(1+\max_{1 \leqslant i\leqslant 3}(|\Im(\alpha_i)|+|\Im(\beta_i)|)\Big)^{21}\bigg)^{\frac{9\eta+18\eps}{4-9\eta}}.
}
\end{lemma}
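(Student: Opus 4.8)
The plan is to prove the lemma along the lines of~\cite{Sandro2} (which rests on~\cite{Sandro1}); I only sketch the strategy here. First I would settle the convergence and the unconditional bound. Summing over the factorisations $k_i=n_im_i$ with $n_i,m_i\in\Z_{\neq0}$ produces $2\,\tau_{\alpha_i,\beta_i}(|k_i|)$ with $\tau_{\alpha,\beta}(k):=\sum_{d\mid k}d^{-\alpha}(k/d)^{-\beta}$, so that the prefactor $\tfrac18$ cancels the $2^3$ sign choices and $\mathcal A_{\ba}(\balpha,\bbeta)=\sum_{a_1k_1+a_2k_2+a_3k_3=0,\,k_i\neq0}\tau_{\alpha_1,\beta_1}(|k_1|)\tau_{\alpha_2,\beta_2}(|k_2|)\tau_{\alpha_3,\beta_3}(|k_3|)$. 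The estimate $\tau_{\alpha,\beta}(k)\ll_\eps|k|^{\eps-\min(\Re\alpha,\Re\beta)}$ together with a dyadic count of the integer points on the plane $a_1k_1+a_2k_2+a_3k_3=0$ (one coordinate being determined by the other two) gives absolute convergence precisely when $\min_i\min(\Re\alpha_i,\Re\beta_i)>\tfrac23$; moreover, since $|\tau_{\alpha,\beta}(k)|\leqslant\tau_{\Re\alpha,\Re\beta}(k)$ and, for fixed $\eps>0$, the real parts stay $\eps$-away from $\tfrac23$, the same count yields $\mathcal A_{\ba}(\balpha,\bbeta)\ll_\eps1$, uniformly in $\ba$ and in the imaginary parts, on the strip $\tfrac23+\eps<\Re(\alpha_i),\Re(\beta_i)\leqslant\tfrac{11}{12}$.

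Second, I would separate the six summation variables and reduce matters to the Estermann function. Writing the constraint additively, $\mathcal A_{\ba}$ becomes an integral over $\theta\in\R/\Z$ of a product of three generating series $\sum_{k\neq0}\tau_{\alpha_i,\beta_i}(|k|)\e{a_ik\theta}$, each interpreted through the Estermann-type Dirichlet series $D_{\alpha,\beta}\pr{s,\tfrac aq}:=\sum_{k\geqslant1}\tau_{\alpha,\beta}(k)\e{k\tfrac aq}k^{-s}$. The key inputs here are the meromorphic continuation and the functional equation of $D_{\alpha,\beta}$ --- equivalently, Voronoi summation for $\tau_{\alpha,\beta}$, or the fact that $\tau_{\alpha,\beta}(k)$ is essentially a Fourier coefficient of a real-analytic Eisenstein series for $\mathrm{SL}_2(\Z)$: $D_{\alpha,\beta}$ continues with (at most simple) poles at $s=1-\alpha$ and $s=1-\beta$ whose residues carry a factor $\zeta(1\mp\alpha\pm\beta)$, and its functional equation relates it to a dual series at $\tfrac{\bar a}q$ with archimedean factor a quotient of $\Gamma$-functions exchanging $\alpha$ and $\beta$. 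From this point of view $\mathcal A_{\ba}$ is a period of a product of three Eisenstein series, and the whole matter is a uniform, power-saving ternary shifted divisor problem.

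Third, I would extract $\mathcal M_{\ba}$ by splitting the $\theta$-integral into major arcs around rationals $\tfrac a\ell$ of small denominator and a complementary part. On the major arcs, applying the functional equation to each of the three factors produces: (i) the residue terms of the $D_{\alpha_i,\beta_i}$, which after summation over the modulus $\ell$ assemble into the singular series $S_{\ba}(\balpha^*,\bbeta^*)$ --- the weight $\varphi(\ell)$ counting primitive residues, and the factors $(a_i,\ell)^{1-\alpha_i^*+\beta_i^*}$ coming from the evaluation of the relevant complete exponential sums modulo $\ell$; (ii) the archimedean $\Gamma$-quotients together with the normalising powers of $|a_i|$; and (iii) the factors $\zeta(1-\alpha_i^*+\beta_i^*)$, where for each $i$ the choice of which of $\{\alpha_i,\beta_i\}$ is denoted $\alpha_i^*$ records whether one picked up the residue at $s=1-\alpha_i$ or at $s=1-\beta_i$ in the $i$-th factor --- hence the sum over the $2^3$ substitutions $\{\alpha_i^*,\beta_i^*\}=\{\alpha_i,\beta_i\}$. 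The overall denominator $\alpha_1^*+\alpha_2^*+\alpha_3^*-2$ reflects the main pole of the continuation: its real part is positive --- so it is no pole --- on the original domain of convergence, and it becomes singular only as one crosses $\sum_i\Re(\alpha_i^*)=2$. Matching all the numerical constants (the $2\sqrt\pi$, and the precise shapes of the $\Gamma$-quotient and of $S_{\ba}$) is then a bookkeeping exercise, and whatever remains is defined to be $\mathcal E_{\ba}(\balpha,\bbeta)$.

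Finally, the holomorphy of $\mathcal E_{\ba}$ on $\Omega_\eps$ would follow by continuing the complementary integral and the major-arc contour shifts past the line $\Re(\,\cdot\,)=\tfrac23$: the only singularities one meets are exactly those already placed in $\mathcal M_{\ba}$. The estimate~\eqref{ml_bound} is the true obstacle and the heart of the matter: it is a power-saving bound for the off-diagonal/error term that must be \emph{simultaneously} uniform in the shifts $a_1,a_2,a_3$ and polynomial in the imaginary parts of $\balpha,\bbeta$, and must survive the continuation below $\Re=\tfrac23$. This is where the full strength of~\cite{Sandro2,Sandro1} is required --- through the spectral theory of automorphic forms (the Kuznetsov formula, the spectral large sieve, and subconvexity for the $\mathrm{GL}_2$ $L$-functions that arise), or, equivalently, through the delta method together with Weil-type bounds for the resulting Kloosterman sums. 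Tracking the conductors of the objects involved through these estimates is what produces the polynomial factors $\big(\max_i|a_i|\big)^{14}$ and $\big(1+\max_i(|\Im\alpha_i|+|\Im\beta_i|)\big)^{21}$, while optimising the amount by which the contours are pushed --- a gain of a small power of the conductor per unit of shift, weighed against the total excursion $\eta=\sum_i(|\Re\alpha_i-\tfrac23|+|\Re\beta_i-\tfrac23|)$ of the real parts from $\tfrac23$ --- yields the exponent $\tfrac{9\eta+18\eps}{4-9\eta}$ and the admissibility condition $\eta<\tfrac29-\eps$. By contrast, the determination of the closed form of $\mathcal M_{\ba}$, although lengthy, requires no deep input.
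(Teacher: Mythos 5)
The paper does not prove this lemma from first principles: its entire proof is a two-line citation identifying the statement as the case $k=3$ of Lemma~4 and Theorem~3 of~\cite{Sandro2} under the substitution $(s,\balpha,\bbeta,\eps)\mapsto(\tfrac23,\balpha-\tfrac23,\bbeta-\tfrac23,3\eps)$, together with a warning that the normalisation of $\eta$ there carries an extra factor $\tfrac32$. Your proposal instead tries to sketch the underlying argument. The soft parts of your sketch are sound and consistent with the statement: the reduction to sums of $\tau_{\alpha_i,\beta_i}$, the convergence threshold $\tfrac23$ and the uniform boundedness, and the way the $2^3$ terms of $\mathcal M_{\ba}$, the singular series $S_{\ba}$ with its $\varphi(\ell)$ and $(a_i,\ell)$ factors, and the pole at $\alpha_1^*+\alpha_2^*+\alpha_3^*=2$ arise from the polar structure of the three divisor generating functions are all correctly identified.

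The gap is in the part of the lemma that the rest of the paper actually consumes. The content here is quantitative: the precise holomorphy domain $\Omega_\eps$ (the window $[\tfrac5{12}+\eps,\tfrac{11}{12}-\eps]$ and the constraint $\eta<\tfrac29-\eps$) and the bound~\eqref{ml_bound} with the exponents $14$, $21$ and $\tfrac{9\eta+18\eps}{4-9\eta}$; these numbers drive every later contour choice ($c_{S_3}=\tfrac{19}{108}+6\eps$, the value of $\gamma$, the exponents $\tfrac{296}{297}$ and $\xi_2$). Your proposal explicitly defers exactly this to ``the full strength of'' the cited works, so nothing beyond a bare citation is actually established --- and the machinery you invoke (Kuznetsov, spectral large sieve, subconvexity, or the delta method with Weil bounds) is not what the source uses. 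As the paper itself explains immediately after the lemma, \cite{Sandro2} proceeds by splitting the $r_i=n_im_i$ into ranges and eliminating the largest variable directly through the linear relation, expanding $\tau_{\alpha,\beta}$ via a shifted Ramanujan identity in Ramanujan sums, separating variables with Mellin transforms, and applying Voronoi summation; the error term is what Voronoi leaves over, with no spectral input, and there is no integration over major and minor arcs. To turn your sketch into a proof you would either have to carry out that elementary argument with explicit conductor bookkeeping to recover the stated exponents, or simply quote \cite{Sandro2} and verify the parameter translation (including the $\tfrac32$ rescaling of $\eta$), which is what the paper does.
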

\begin{proof}
This is the case of $k=3$ of Lemma~4 and Theorem~3 (in the form of Remark~2) of~\cite{Sandro2}  where the parameters $(s,\balpha,\bbeta,\eps)$  appearing there are taken to be $(\frac23,\balpha-\frac23,\bbeta-\frac23,3\eps)$. Notice that, contrary to~\cite{Sandro2}, we don't have the fraction $\frac32$ in front of the sum in the definition of $\eta$.
\end{proof}

{Since Lemma~\ref{ml} constitutes the main tool for our proof of Theorems~\ref{mtt1} and~\ref{2}, we say a few words about its proof. First, one divides the variables $r_i=n_im_i$ in various ranges and eliminates the largest one (say $r_1$) using the linear relation among them. In order to do this one has to write $\sum_{n_1m_1=r_1}|n_1|^{-\alpha_1}|m_1|^{-\beta_1}$ in an efficient way in terms of the remaining variables. This is done by using (a shifted version of) the identity of Ramanujan for the divisor function $\tau$ in terms of Ramanujan sums, in combination with a careful use of Mellin transforms to separate variables in expressions such as $(r_2\pm r_3)^{s}$. After the variables are completely separated, one applies Voronoi's summation formula to the sums over $r_2$ and $r_3$. The main terms will then give the polar structure, whereas the error term will produce functions which are holomorphic on the stated range.
}
\newline
\newline
\indent
The following lemma allows us to replace the characteristic function of the interval $[0,1]$ by a smooth approximation at a cost of a controlled error.

\begin{lemma}\label{cut-off}
Let $f(x)=e^{-1/(x-x^2)}$ for $0< x< 1$ and $f(x)=0$ otherwise. Let $C:=\int_0^1f(y){\rm{d}}y $ and for $0<\delta<1/2$, let
 $$F_\delta^{\pm}(x):=\frac1{\delta C}\int_x^{+\infty} f\left(\frac{y-1+(1\mp1)\delta/2}\delta\right)\,{\rm{d}}y, \quad x \in \mathbb{R}^+.$$ 
 Then $F_\delta^{\pm}\in\mathcal C^{\infty}\left(\R^+\right)$, $F_\delta^{\pm}(x)=1$ for $x\leqslant 1-\delta$, $F_{\delta}^\pm(x)=0$ for $x\geqslant 1+\delta$ and, for $x \geqslant 0$,
 $$0\leqslant F_{\delta}^-(x)\leqslant \chi_{[0,1]}(x)\leqslant F_\delta^{+}(x),$$ 
 where $\chi_{[0,1]}$ is the indicator function of the interval $[0,1].$ Moreover, the Mellin transform $\tilde F_\delta^{\pm}(s)$ of $F_\delta^{\pm} (x)$ is holomorphic in $\C\setminus\{0\}$ with a simple pole of residue one at $s=0$ and  for all $n\geqslant 0$ it satisfies for all $s \in \C\setminus\{0\}$
 \es{\label{bounds_cut-off}
 \tilde F^\pm_{\delta}(s)\ll_n \frac1{\delta^n(1+|s|)^{n+1}}, \qquad \tilde F^\pm_{\delta}(s)-\frac1s\ll \min(\delta,(1+|s|)^{-1}).
 }
\end{lemma}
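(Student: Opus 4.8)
The plan is to construct $F_\delta^\pm$ explicitly as antiderivatives of a rescaled bump function and then verify each claimed property by elementary manipulations, reserving the real work for the Mellin transform bounds. First I would record the basic facts about $f$: it is the standard mollifier, smooth on all of $\R$ with support exactly $[0,1]$, and $C=\int_0^1 f>0$ is a fixed positive constant, so $\frac{1}{\delta C}f\bigl(\frac{y-1+(1\mp1)\delta/2}{\delta}\bigr)$ is a smooth nonnegative function of $y$ supported on an interval of length $\delta$. For $F_\delta^+$ the shift is $0$, so the bump is supported on $[1-\delta,1]$; for $F_\delta^-$ the shift is $\delta$, so the bump is supported on $[1-2\delta,1-\delta]$. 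Since $F_\delta^\pm(x)=\frac{1}{\delta C}\int_x^{+\infty}(\cdots)\,dy$ is (up to the normalisation) the tail integral of a probability density, $F_\delta^\pm\in\mathcal C^\infty(\R^+)$, it is nonincreasing, equals $1$ for $x$ below the support of the bump and $0$ for $x$ above it; checking the endpoints $1-\delta$ and $1+\delta$ against the two support intervals gives $F_\delta^+(x)=1$ for $x\le1-\delta$ and $=0$ for $x\ge1$, while $F_\delta^-(x)=1$ for $x\le1-2\delta$ and $=0$ for $x\ge1-\delta$; in particular both are constant outside $[1-\delta,1+\delta]$ as stated. The sandwich $0\le F_\delta^-\le\chi_{[0,1]}\le F_\delta^+$ then follows because all three functions are nonincreasing, take values in $[0,1]$, and at the crossover point $x=1$ one has $F_\delta^-(1)=0\le\chi_{[0,1]}(1)=1$ wait—more carefully, one compares on the regions $x\le1-\delta$, $1-\delta\le x\le1$, $x\ge1$: on the first region all equal $1$ except $F_\delta^-$ which is $\le1$; on the last all equal $0$ except $F_\delta^+$ which is $\ge0$; in the middle $\chi_{[0,1]}=1=F_\delta^+$ and $F_\delta^-\le1$, except that $F_\delta^+$ may have already started to decrease below $1$ only for $x>1-\delta$, so one uses instead that $F_\delta^+$ is supported with bump on $[1-\delta,1]$ hence $F_\delta^+(x)\ge F_\delta^+(1^-)$; the cleanest argument is simply that $\chi_{[0,1]}(x)-F_\delta^-(x)=\int_{\max(x,0)}^{1}$ of a nonnegative integrand minus a nonnegative remainder, and symmetrically for $F_\delta^+$, so I would phrase it via $F_\delta^+(x)-\chi_{[0,1]}(x)=\frac{1}{\delta C}\int_{x}^{\infty}f(\cdots)\,dy-\chi_{[0,1]}(x)\ge0$ by splitting into cases on $x$.

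Next I would turn to the Mellin transform. By definition $\tilde F_\delta^\pm(s)=\int_0^\infty F_\delta^\pm(x)x^{s-1}\,dx$, which converges for $\Re(s)>0$ since $F_\delta^\pm$ is bounded and compactly supported; integrating by parts once, using that $F_\delta^\pm$ vanishes near $\infty$ and equals $1$ near $0$, gives
\[
\tilde F_\delta^\pm(s)=\frac1s-\frac1s\int_0^\infty (F_\delta^\pm)'(x)\,x^{s}\,dx,
\]
and since $(F_\delta^\pm)'=-\frac{1}{\delta C}f\bigl(\frac{x-1+(1\mp1)\delta/2}{\delta}\bigr)$ is a smooth compactly supported function of total mass $-1$, the second term is entire in $s$; this exhibits the meromorphic continuation to $\C\setminus\{0\}$ with the only singularity a simple pole of residue $1$ at $s=0$. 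Repeated integration by parts ($n$ further times, each time differentiating $(F_\delta^\pm)'$) replaces $x^s$ by $x^{s+n}$ up to a factor $1/(s(s+1)\cdots(s+n-1))$, and since each derivative of $f\bigl(\frac{\cdot-\cdot}{\delta}\bigr)$ contributes a factor $\delta^{-1}$ and $f^{(k)}$ is bounded on the unit interval, the resulting integral is $O_n(\delta^{-n})$ uniformly in $s$; combined with $|s(s+1)\cdots(s+n)|\gg(1+|s|)^{n+1}$ on, say, $\Re(s)\ge0$ (and using the functional-type periodicity of the Mellin transform to extend off the right half-plane, or just by a direct estimate since $\tilde F_\delta^\pm$ is bounded by $\delta^{-n}(1+|s|)^{-n-1}$ type bounds on vertical lines by the same computation shifted), one gets the first bound $\tilde F_\delta^\pm(s)\ll_n \delta^{-n}(1+|s|)^{-n-1}$. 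For the second bound I would write $\tilde F_\delta^\pm(s)-\frac1s=-\frac1s\int (F_\delta^\pm)'(x)x^s\,dx$; estimating the integrand trivially by $\frac{1}{\delta C}|f|$ on an interval of length $\delta$ with $x^s$ of size $O(1)$ there (since the support sits near $x=1$, so $x^{\Re(s)}=O(1)$ for $\Re(s)$ bounded, and $|x^s|=x^{\Re(s)}$ is $O(1)$ near $1$) gives a bound $O(1/|s|)\ll(1+|s|)^{-1}$; alternatively, writing $(F_\delta^\pm)'$ as $-\frac1{\delta C}f(\cdots)$ whose integral against $x^0=1$ is $-1$, one has $\int(F_\delta^\pm)'(x)x^s\,dx=\int(F_\delta^\pm)'(x)(x^s-1)\,dx$, and since $|x^s-1|\ll\delta(1+|s|)$ on the support (of length $\delta$ around $x=1$, by the mean value theorem on $x\mapsto x^s$), this integral is $O(\delta(1+|s|))$, whence $\tilde F_\delta^\pm(s)-\frac1s\ll\delta(1+|s|)/|s|\ll\delta$ for $|s|\gtrsim1$ and $\ll\delta$ trivially for $|s|\lesssim1$ as well; taking the better of the two cases yields $\tilde F_\delta^\pm(s)-\frac1s\ll\min(\delta,(1+|s|)^{-1})$.

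The only genuinely delicate point is making the estimates uniform in $s$ across all of $\C\setminus\{0\}$ rather than only on a right half-plane: the integration-by-parts representation $\tilde F_\delta^\pm(s)=\frac1s-\frac1s\int(F_\delta^\pm)'(x)x^s\,dx$ is valid as an identity of entire-plus-simple-pole functions, but the estimate $|\int(F_\delta^\pm)'(x)x^s\,dx|\le\int|(F_\delta^\pm)'(x)|x^{\Re(s)}\,dx$ becomes large when $\Re(s)\to-\infty$ because $x^{\Re(s)}$ blows up near $x=0$ — except that $(F_\delta^\pm)'$ is supported away from $0$ (on an interval around $x=1$), so in fact $x^{\Re(s)}$ stays bounded there by $\max(1,(1-2\delta)^{\Re(s)})\le(1/2)^{\Re(s)}$ say for $\delta<1/4$, which is $O(1)$ only for $\Re(s)\ge-O(1)$. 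For $\Re(s)$ very negative one instead iterates integration by parts the other way or, more simply, notes that the first bound $\tilde F_\delta^\pm(s)\ll_n\delta^{-n}(1+|s|)^{-n-1}$ is what is actually needed downstream and can be proven by $n$ integrations by parts which only ever involve the compactly-supported-away-from-zero functions $f^{(k)}(\cdots)$, so the dangerous factor is always $x^{\Re(s)+k}$ evaluated near $x=1$, i.e.\ harmless; thus the apparent obstruction dissolves once one observes that every boundary term vanishes and every integrand is supported in a fixed compact subinterval of $(0,\infty)$. I would therefore organise the proof so that the Mellin bounds are deduced purely from this iterated-integration-by-parts identity together with $\|f^{(k)}\|_{L^1[0,1]}\ll_k1$ and $x\asymp1$ on the support, and spend correspondingly little space on the elementary verification of the pointwise properties of $F_\delta^\pm$.
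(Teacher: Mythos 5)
Your overall strategy coincides with the paper's: the pointwise properties of $F_\delta^\pm$ are elementary, and the Mellin bounds come from integration by parts (once for the continuation and the residue, $n$ times for the decay in $s$) together with $x^s=1+O(|s|\,|x-1|)$ on the $\delta$-neighbourhood of $x=1$ carrying $(F_\delta^\pm)'$; the paper merely adds a change of variables to rewrite everything as $\frac1{sC}\int_0^1(1+\delta x-(1\mp1)\delta/2)^sf(x)\,dx$. However, two of your steps are wrong as written. First, you have the supports of the two bumps backwards: since $f$ is supported on $(0,1)$, the density $\frac1{\delta C}f\bigl(\frac{y-1}{\delta}\bigr)$ defining $F_\delta^+$ lives on $(1,1+\delta)$, and the shifted one defining $F_\delta^-$ lives on $(1-\delta,1)$ --- not $[1-\delta,1]$ and $[1-2\delta,1-\delta]$ as you claim. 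With your supports, $F_\delta^+(x)<1$ for some $x<1$, so the inequality $\chi_{[0,1]}\leqslant F_\delta^+$ you are trying to verify would be false; you notice the tension on $1-\delta\leqslant x\leqslant 1$ and never resolve it. With the correct supports the sandwich is immediate: $F_\delta^+\equiv1$ on $[0,1]$ and $F_\delta^-\equiv0$ on $[1,\infty)$.

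Second, the integration by parts. For $\Re(s)>0$ both boundary terms vanish ($F_\delta^\pm=0$ near $\infty$, and $x^s\to0$ at $0$), so $\tilde F_\delta^\pm(s)=-\frac1s\int_0^\infty(F_\delta^\pm)'(x)x^s\,{\rm d}x$; there is no separate $\frac1s$ term. Your formula $\tilde F_\delta^\pm(s)=\frac1s-\frac1s\int(F_\delta^\pm)'(x)x^s\,{\rm d}x$ would give residue $2$ at $s=0$, since the second term itself has residue $-\int(F_\delta^\pm)'=1$ there. You then compound this with the identity $\int(F_\delta^\pm)'x^s\,{\rm d}x=\int(F_\delta^\pm)'(x^s-1)\,{\rm d}x$, which is off by exactly $\int(F_\delta^\pm)'=-1$; the two errors cancel, and the expression you ultimately estimate, $\tilde F_\delta^\pm(s)-\frac1s=-\frac1s\int(F_\delta^\pm)'(x)(x^s-1)\,{\rm d}x$, is the correct one and does give $\ll\delta$ for $|s|\delta\leqslant1$, exactly as in the paper. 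So the analytic content is right and matches the paper's proof, but the bookkeeping must be redone before the residue computation and the second bound are actually established. (Your worry about uniformity as $\Re(s)\to-\infty$ is legitimate and not fully dispelled by your remark: for $F_\delta^-$ the factor $x^{\Re(s)}$ on $(1-\delta,1)$ can be as large as $(1-\delta)^{\Re(s)}$, so the stated bounds really require $\Re(s)$ bounded below, which is all that is used later; the paper glosses over this point as well.)
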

\begin{proof}
The statements on $F_\delta^{\pm}$ are immediate from the definitions. Moreover, assuming $\Re(s)>0$ and integrating by parts we have
\est{
\tilde F_\delta^{\pm}(s)=\frac1{\delta Cs}\int_0^{+\infty} x^s f\bigg(\frac{x-1+(1\mp1)\delta/2}\delta\bigg)\,\mbox{d}x=\frac1{sC}\int_0^1(1+\delta x-(1\mp1)\delta/2)^sf(x)\,\mbox{d}x,
} 
the last inequality resulting from a change of variable. This already shows that $\tilde F_\delta^{\pm}$ is holomorphic in $\C\setminus\{0\}$ with a simple pole of residue one at $s=0$. Integrating by parts $n$ times then yields $ \tilde F^\pm_{\delta}(s)\ll_n \delta^{-n}(1+|s|)^{-n-1}$, which also implies the second bound in~(\ref{bounds_cut-off}) if $\delta\geqslant 1/|s|$.
Finally, if $\delta<1/|s|$ we have
\est{
\tilde F^\pm_{\delta}(s)-\frac1s=\frac1{sC}\int_0^1\big((1+\delta x-(1\mp1)\delta/2)^s-1\big)f(x)\,\mbox{d}x\ll \delta
}
since $(1+x)^s=1+O(|sx|)$ for $|sx|<1$, $|x|\leqslant\frac 12$. 
\end{proof}

\section{Proof of Theorem \ref{2} neglecting the coprimality conditions}\label{nocop}
By Section~\ref{ut1}, we need to count the integer solutions to
\es{\label{torsor_eq}
x_1z_1+x_2z_3+x_3z_3=0
}
satisfying the inequality $\max\{|x_1|,z_2z_3z_4\}\times \max\{|x_2|,z_1z_3z_4\}\times\max\{|x_3|,z_1z_2z_4\}\leqslant B$ and the coprimality conditions
\es{\label{cop_cond}
(z_1,z_2)=(z_1,z_3)=(z_2,z_3)=1,\quad (x_1,z_2z_3z_4)=(x_2,z_1z_3z_4)=(x_3,z_2z_3z_4)=1
}
with $z_1,z_2,z_3,z_4>0$. The case where $x_1x_2x_3=0$ can be dealt with easily and we postpone its treatment to section \ref{arith}, so we focus on the case where $x_1x_2x_3\neq0$. We start with the following proposition which gives an asymptotic formula for the number of solutions to the more general equation $a_1x_1z_1+a_2x_2z_3+a_3x_3z_3=0$ without imposing any coprimality condition. These conditions do not factor out immediately at the beginning of the argument, so one cannot deduce Theorem~\ref{2} directly from the Proposition \ref{mlc}, however it is instructive to prove this result first, as all the analytic difficulties are exactly the same but the notations and the arithmetic are simplified. In Section~\ref{arith} we shall conclude the proof of Theorem~\ref{2} by performing the required arithmetic computations and indicating the minor differences in the analytic argument.

\begin{prop}\label{mlc}
Let $B\geqslant1$ and $\varepsilon>0$. Let $\ba=(a_1,a_2,a_3)\in\Z_{\neq0}^3$ and 
\es{\label{def_K}
&K_{\ba}(B):=\# \Bigg\{ (\bx,\bz)\in \Z_{\neq0}^3\times\Z^4_{>0} \Bigg|
  \begin{aligned}
  & a_1x_1z_1+a_2x_2z_2+a_3x_3z_3=0 \\ 
& \max\{|x_1|,z_2z_3z_4\}\times \max\{|x_2|,z_1z_3z_4\}\times\max\{|x_3|,z_1z_2z_4\}\leqslant B
  \end{aligned}\Bigg\}.\\
}
Then there exists a polynomial $P$ of degree $3$ such that
\es{\label{fv}
 K_{\ba}(B):=BP(\log B)+O_{\varepsilon}\Big(B^{{\frac{296}{297}}+\eps}\max_{1\leqslant i\leqslant 3}|a_i|^{{14}}\Big).
}
The polynomial  $P$ has leading coefficient $\frac1{144}\mathcal I_{\ba}{{\mathfrak S}'_\ba}$, where 
\es{\label{defI}
\mathcal I_{\ba}:=\iint_{{[-1,1]^3\times [0,1]^2}}\chi_{[0,|a_3/z|]}\Big(a_1\frac{x_1}{y_1}+a_2\frac{x_2}{y_2}\Big){\rm{d}}x_1{\rm{d}}x_2{\rm{d}}z\frac{{\rm{d}}y_1{\rm{d}}y_2}{|a_3|y_1y_2}
}
and
\est{
{{\mathfrak S}'_\ba}&:=\sum_{\ell=1}^\infty\frac{(a_1,\ell)(a_2,\ell)(a_3,\ell)\varphi(\ell)}{\ell^{3}}.
}

\end{prop}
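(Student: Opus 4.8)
The plan is to express $K_{\ba}(B)$ as a Perron-type multiple contour integral, insert the analytic information on the parabolic Eisenstein series $\mathcal A_{\ba}(\balpha,\bbeta)$ from Lemma~\ref{ml}, and then extract the main term by moving contours past the relevant poles. First I would replace the sharp constraint $\max\{|x_1|,z_2z_3z_4\}\cdot\max\{|x_2|,z_1z_3z_4\}\cdot\max\{|x_3|,z_1z_2z_4\}\leqslant B$ by smooth weights $F_\delta^{\pm}$ from Lemma~\ref{cut-off}: writing each factor $\max\{|x_i|,\,z_jz_kz_4\}$ and taking Mellin transforms in the three height variables plus the overall $B$, the count becomes a triple (really, after handling the product over the three blocks, a six-fold) Mellin--Barnes integral whose integrand involves $\tilde F_\delta^{\pm}$, a power of $B$, and a Dirichlet series in the exponents. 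The point of the max's is that $\max\{|x_i|,M\}^{-w}$ unfolds, via a Mellin transform, into a sum of two monomials $|x_i|^{-\alpha_i}M^{-\beta_i}$-type terms with a kernel that is a ratio of Gamma functions and has good decay; after doing this for all three blocks and collecting, the $(\bx,\bz)$-sum over solutions of $a_1x_1z_1+a_2x_2z_2+a_3x_3z_3=0$ is exactly of the shape $\mathcal A_{\ba}(\balpha,\bbeta)$ (with $z_4$ contributing an extra $\zeta$-factor through a separate, easy, variable).

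Next I would substitute the decomposition $\mathcal A_{\ba}=\mathcal M_{\ba}+\mathcal E_{\ba}$ of Lemma~\ref{ml}. The error piece $\mathcal E_{\ba}$ is holomorphic on $\Omega_\eps$ and satisfies the polynomial bound~\eqref{ml_bound}; choosing the contours inside $\Omega_\eps$ with $\eta$ small (of size $\eps$, balanced against $\delta$ and $B$) and using the rapid decay of $\tilde F_\delta^{\pm}$ in the imaginary directions from~\eqref{bounds_cut-off}, this contributes only to the error term, producing the stated $B^{296/297+\eps}\max_i|a_i|^{14}$ after optimising $\delta$ and the contour positions. The main term comes from $\mathcal M_{\ba}$: it is a sum over the $2^3$ choices of $\{\alpha_i^*,\beta_i^*\}$ of an explicit product of $\zeta$'s, $\Gamma$-ratios, the arithmetic factor $S_{\ba}$, and the pole $(\alpha_1^*+\alpha_2^*+\alpha_3^*-2)^{-1}$. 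I would then shift the six contours to pick up residues; the power $B$ and the factor $\log^3 B$ arise from a pole of total order $4$ at the relevant point (the simple pole of $\mathcal M_{\ba}$ combined with poles of the three $\zeta(1-\alpha_i^*+\beta_i^*)$ and of $\tilde F_\delta^{\pm}$ at $0$), giving $B\cdot P(\log B)$ with $\deg P=3$. Letting $\delta\to0$ at the end removes the dependence of $P$ on the smoothing.

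The last step is to identify the leading coefficient of $P$. Tracking the residue computation at the top-order pole, the arithmetic part collapses to $\mathfrak S'_\ba=\sum_\ell (a_1,\ell)(a_2,\ell)(a_3,\ell)\varphi(\ell)\ell^{-3}$ (this is $S_{\ba}$ evaluated at the central point, where all shifts vanish), and the archimedean part, coming from the $\Gamma$-ratios and the original height integral, must be reassembled into the real integral $\mathcal I_{\ba}$ of~\eqref{defI}; the normalising constant $\tfrac1{144}$ appears from the product of the elementary factors ($2\sqrt\pi$, the $\Gamma$-values at the central point, and the combinatorial count of the $2^3$ terms). Concretely I would verify $\mathcal I_{\ba}$ independently by a direct real-variable computation: parametrising the solutions of $a_1x_1z_1+a_2x_2z_2+a_3x_3z_3=0$ over the reals, the volume of the region cut out by the three $\max$-constraints with $|x_i|,z_j$ in dyadic-normalised boxes is exactly the integral displayed in~\eqref{defI}, which matches the residue output.

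The main obstacle will be the contour-shifting bookkeeping in the main-term extraction: the integrand $\mathcal M_{\ba}$ has the single explicit pole at $\alpha_1^*+\alpha_2^*+\alpha_3^*=2$ but also the three moving poles of the $\zeta$-factors and the pole of $\tilde F_\delta^{\pm}(s)$ at $s=0$, and these interact in all $2^3$ summands; keeping track of which residues genuinely contribute to the $B\log^3 B$ term versus the lower-order terms of $P$, while simultaneously controlling the tails using~\eqref{ml_bound} and the decay of the smoothing kernel, is where the delicate work lies. A secondary difficulty is making the $\eps$-management uniform in $\ba$ so that the $\max_i|a_i|^{14}$ dependence in~\eqref{fv} comes out cleanly, which forces one to be careful about how the contours are allowed to depend on $\ba$.
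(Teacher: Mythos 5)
Your outline follows essentially the same route as the paper's proof: smooth the height condition with $F_\delta^{\pm}$, separate variables by Mellin transforms so that the inner sum becomes $\zeta(\cdots)\,\mathcal A_{\ba}(\balpha-\xi,\balpha+\xi)$, split via Lemma~\ref{ml} into $\mathcal M_{\ba}+\mathcal E_{\ba}$, bound the error using~\eqref{ml_bound} together with the decay of $\tilde F_\delta^{\pm}$, and obtain $BP(\log B)$ from residues at an order-four pole, identifying the leading coefficient as $\tfrac1{144}\mathcal I_{\ba}{\mathfrak S}'_{\ba}$. The only (cosmetic) divergences are that the paper separates the maxima by smoothing all $2^3$ inequalities $\prod_{i\in I}|x_i|\prod_{j\in J}(z_1z_2z_3z_4/z_j)\leqslant B$ with eight Mellin variables $s_I$ rather than unfolding each $\max$ by a Mellin--Barnes kernel, and that the $\log^3 B$ actually arises from the quadruple pole of $\zeta(1+3\xi)\zeta(1+2\xi)^3$ at $\xi=0$ in the single summand of $\mathcal M_{\ba}$ with all $\epsilon_i=+1$ (the $k=2$ summand contributing only an extra term $W_{\ba}B$ and the $k=0,1$ summands only error terms), not from the pole of $\tilde F_\delta^{\pm}$ at the origin.
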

\begin{proof}
In the set defining $K_{\ba}(B)$ we have $8$ inequalities coming from all the possible values taken by the maxima. In other words, given each subset $I\seq S_3:=\{1,2,3\}$  we have the condition $$\frac{(z_1z_2z_3z_4)^{|J|}}{B}\prod_{i\in I}|x_i|\prod_{j\in J}z_j^{-1}\leqslant 1$$ where $J:=S_3\setminus I$. Now, let $0<\delta<\frac12$ and $F^{\pm}_{\delta}$ be as in Lemma~\ref{cut-off}. Then we have $K_{\ba}^-(B)\leqslant K_{\ba}(B)\leqslant K_{\ba}^+(B)$, where 
\est{
&K_{\ba}^\pm(B):=\sum_{(\bx,\bz)\in \Z_{\neq0}^3\times\Z^4_{>0}\atop a_1x_1z_1+a_2x_2z_2+a_3x_3z_3=0} \prod_{I\seq S_3}F^{\pm}_{\delta}\Bigg(\frac{(z_1z_2z_3z_4)^{|J|}}{B}\prod_{i\in I}|x_i|\prod_{j\in J}z_j^{-1}\Bigg).
}
Clearly it is sufficient to show that~\eqref{fv} holds  for both $K_{\ba}^-(B)$ and $K_{\ba}^+(B)$ with the same polynomial~$P$.
We now write each $F_{\delta}^{\pm}$ in terms of its Mellin transform using the variable $s_{I}$ for the cut-off function corresponding to the set $I$. For brevity we shall often indicate for example with $s_{123}$ the variable $s_{\{1,2,3\}}$ and with $c_{\{1,2,3\}}$ or $c_{123}$ the corresponding line of integration, and similarly for the other variables. In particular, we will denote by $s$ the variable $s_{\varnothing}$.
 As lines of integration, we take  $c_{I}=\frac{|I|}{12}+\eps$ for all $I$ for a fixed $\varepsilon>0$ small enough. Doing so we obtain
\est{
K_{\ba}^\pm(B)=\!\!\!\hspace{-2em}\sum_{(\bx,\bz)\in \Z_{\neq0}^3\times\Z^4_{>0}\atop a_1x_1z_1+a_2x_2z_2+a_3x_3z_3=0}\frac1{(2\pi i)^8}\iint_{(c_{I})}\frac{B^{\sum_{I}s_{I}}}{z_4^{\sum_{I}s_I(3-|I|)}}\prod_{i=1}^3|x_i|^{-\sum_{I, i\in I}s_I}z_i^{-\sum_{I}s_{I}(2+\delta_{i\in I}-|I|)}
\prod_I\,\tilde F_\delta^\pm (s_I)\mbox{d}s_I
}
where $\delta_{i\in I}=1$ if $i\in I$ and $\delta_{i\in I}=0$ otherwise and where the sums inside the integrals are over $I\subseteq S_3$. Notice that with this choice we have
\est{
\sum_{I\subseteq S_3}c_I=1+8\eps,\quad \sum_{I\subseteq S_3\atop i\in I}c_I=\frac23+4\eps,\quad \sum_{I\subseteq S_3}c_{I}(2+\delta_{i\in I}-|I|)=\frac23+8\eps,\quad \sum_{I\subseteq S_3}c_I(3-|I|)=1+12\eps.
}
In particular the above series are absolutely convergent by Lemma~\ref{ml}. Now, write 
\begin{equation}
\begin{aligned}
\xi&:=\tfrac12(2s+s_1+s_2+s_3-s_{123})\\
\alpha_1&:=\tfrac12(2 s +3 s_1+s_2 + s_3 + 2 s_{12} + 2 s_{13}+  s_{123}  )=\sum_{1\in I}s_I+\xi=\sum_{I}s_{I}(2+\delta_{1\in I}-|I|)-\xi\\
\alpha_2&:=\tfrac12(2 s + s_1+3s_2 + s_3 + 2 s_{12} + 2 s_{23}+  s_{123}   )=\sum_{2\in I}s_I+\xi=\sum_{I}s_{I}(2+\delta_{2\in I}-|I|)-\xi\\
\alpha_3&:=\tfrac12(2 s + s_1+s_2 +3 s_3 + 2 s_{13} + 2 s_{23}+  s_{123}   )=\sum_{3\in I}s_I+\xi=\sum_{I}s_{I}(2+\delta_{3\in I}-|I|)-\xi\\
\end{aligned}
\label{nn}
\end{equation}
where we are neglecting here the dependencies on the variables $s_I$ in the notations in order to simplify the exposition.
 Notice then that 
\est{
\sum_{I\seq S_3}s_I=\tfrac12(\alpha_1+\alpha_2+\alpha_3-\xi),\qquad \sum_{I\seq S_3}s_I(3-|I|)=\tfrac12(\alpha_1+\alpha_2+\alpha_3+3\xi).
}
Thus, summing the Dirichlet series we have
\est{
K_{\ba}^\pm(B)=\frac1{(2\pi i)^8}\iint_{(c_{I})}B^{\frac12(\alpha_1+\alpha_2+\alpha_3-\xi)}\zeta(\tfrac12(\alpha_1+\alpha_2+\alpha_3+3\xi))\mathcal A_{\ba}(\balpha-\xi,\balpha+\xi)
\prod_I\,\tilde F_\delta^\pm (s_I)\mbox{d}s_I
}
with the notation of Lemma \ref{ml}. By Lemma~\ref{ml} and using the notations (\ref{decomposition}), we can split $\mathcal A_\ba(\balpha-\xi,\balpha+\xi)$ into $$\mathcal A_\ba(\balpha-\xi,\balpha+\xi)=\mathcal {M}_\ba(\balpha-\xi,\balpha+\xi)+\mathcal {E}_\ba(\balpha-\xi,\balpha+\xi)$$ thus obtaining the corresponding decomposition $K_{\ba}^\pm(B)=M_{\ba}^\pm(B)+E_{\ba}^\pm(B)$, with
\es{\label{M}
M_{\ba}^\pm(B):=\frac1{(2\pi i)^8}\iint_{(c_{I})}B^{\frac12(\alpha_1+\alpha_2+\alpha_3-\xi)}\zeta(\tfrac12(\alpha_1+\alpha_2+\alpha_3+3\xi))\mathcal M_{\ba}(\balpha-\xi,\balpha+\xi)
\prod_I\,\tilde F_\delta^\pm (s_I)\mbox{d}s_I
}
and
\est{
E_{\ba}^\pm(B):=\frac1{(2\pi i)^8}\iint_{(c_{I})}B^{\frac12(\alpha_1+\alpha_2+\alpha_3-\xi)}\zeta(\tfrac12(\alpha_1+\alpha_2+\alpha_3+3\xi))\mathcal E_{\ba}(\balpha-\xi,\balpha+\xi)
\prod_I\,\tilde F_\delta^\pm (s_I)\mbox{d}s_I.
}
In the latter integral we move the line of integration $c_{S_3}$ to $c_{S_3}=\frac{1}{4}-\frac{2}{27}+6\eps=\frac{19}{108}+6\eps$. Notice that doing so, in the new lines of integration, we have $\Re(\alpha_i+\xi)=\frac23+8\eps$ and $\Re(\alpha_i-\xi)=\frac{2}3-\frac2{27}+9\eps=\frac{16}{27}+9\eps$ for all $i\in \{1,2,3\}$. In particular we stay on the right of the pole of the $\zeta$-function and we  have $\eta=\sum_{i=1}^3(|\Re(\alpha_i-\xi)-\tfrac23|+|\Re(\alpha_i+\xi)-\tfrac23|)=\frac29-3\eps$. By~\eqref{ml_bound} we then have for $\eps$ small enough,
\est{
E_{\ba}^\pm(B)\ll A^{{14}}B^{\frac{25}{27}+13\eps}\iint_{(c_{I})}\big(1+\max_{I}|s_I|\big)^{{21}}
\prod_I\, \big|\tilde F_\delta^\pm (s_I)\big| \mbox{d}s_I
}
where $\displaystyle A:=\max_{1\leqslant i\leqslant 3}|a_i|$. Now, we have
\begin{equation}
\begin{aligned}
&\int_{1}^{+\infty}\min\Big(\frac1{x},\frac1{\delta x^2}\Big)\mbox{d}x\ll |\log \delta|\ll_{\varepsilon} \delta^{-\varepsilon},\\
& \int_{1}^{+\infty}x^{{21}}\min\Big(\frac1{x},\frac1{{\delta^{22} x^{23}}}\Big)\mbox{d}x\leqslant \int_{1}^{1/\delta}x^{{20}}\,\mbox{d}x+\delta^{-{22}}\int_{1/\delta}^{+\infty} x^{-{2}}\,\mbox{d}x\ll \delta^{-{21}}
\end{aligned}
\label{bb}
\end{equation}
and so, using Lemma~\ref{cut-off} we find
\es{\label{dax}
E_{\ba}^\pm(B)\ll A^{{14}}B^{\frac{25}{27}+13\eps}\delta^{-{21}-\eps}.
}
Now, we consider the main term $M_{\ba}^\pm(B)$ defined in (\ref{M}). We can write 
 $\mathcal {M}_{\ba}(\balpha-\xi,\balpha+\xi)$ as
 \est{
\mathcal {M}_\ba(\balpha-\xi,\balpha+\xi)=\sum_{k=0}^3\frac{\zeta(1+2\xi)^k\zeta(1-2\xi)^{3-k}}{\alpha_1+\alpha_2+\alpha_3+(3-2k)\xi-2} \mathcal {Q}_{\ba,k}({\balpha,\xi})
}
with
\es{\label{defQ}
\mathcal {Q}_{\ba,k}({\balpha,\xi})&:=\sum_{\bepsilon\in\{\pm1\}^3\atop \#\{i\mid \epsilon_i=1\}=k}\sum_{\ell=1}^{+\infty}\frac{(a_1,\ell)^{1+2\epsilon_1\xi}(a_2,\ell)^{1+2\epsilon_2\xi}(a_3,\ell)^{1+2\epsilon_3\xi}}{\ell^{3+2(2k-3)\delta}}\varphi(\ell)\\
&\quad\times
2\pi^\frac12\prod_{i=1}^3\frac{\Gamma(\frac{-\alpha_i+\epsilon_i\xi}2+\frac{1+\alpha_1+\alpha_2+\alpha_3+(3-2k)\xi}{6})}{|a_i|^{-\alpha_i+\epsilon_i\xi+\frac{1+\alpha_1+\alpha_2+\alpha_3+(3-2k)\xi}{3}}\Gamma(\frac{1+\alpha_i-\epsilon_i\xi}2-\frac{1+\alpha_1+\alpha_2+\alpha_3+(3-2k)\xi}{6})}
}
and where the sum is over  $\bepsilon=(\epsilon_1,\epsilon_2,\epsilon_3)\in\{\pm1\}^3$. Notice that in the region
\es{\label{region_Q_bounded}
{0\leqslant}{\Re\bigg({-\alpha_i+\epsilon_i\xi}+\frac{1+\alpha_1+\alpha_2+\alpha_3+(3-2k)\xi}{3}\bigg)\leqslant \frac12\quad \forall i\in\{1,2,3\}},\qquad {|\Re(\xi)|\leqslant\tfrac16-\eps}
}
we have that $\mathcal {Q}_{\ba,k}(\balpha,\xi)$ is holomorphic and satisfies
\es{\label{boundQ}
\mathcal {Q}_{\ba,k}(\balpha,\xi)\ll {A^{3+6\Re(\xi)}}
}
uniformly in $\ba$, by the bound $\left|(a,\ell)^s/\ell^s\right|\leqslant {(|a|/\ell)^{\Re(s)}}$ {for $\Re(s)\geq0$}, and since
\es{\label{sfg}
\Gamma(\tfrac s2)/\Gamma(\tfrac{1-s}2)\ll_\sigma (1+|t|)^{\sigma-\frac12}
}
by Stirling's formula~\cite[(8.328.1)]{GR}.

Then, we write $M_{\ba}^\pm(B)=\displaystyle \sum_{k=0}^3M_{\ba,k}^\pm(B)$ where
\est{
M_{\ba,k}^\pm(B)&=\frac1{(2\pi i)^8}\iint_{(c_{I})}B^{\frac12(\alpha_1+\alpha_2+\alpha_3-\xi)}\zeta(\tfrac12(\alpha_1+\alpha_2+\alpha_3+3\xi))\frac{\zeta(1+2\xi)^k\zeta(1-2\xi)^{3-k}}{\alpha_1+\alpha_2+\alpha_3+(3-2k)\xi-2} \\
&\hspace{7em}\times\mathcal {Q}_{\ba,k}({\balpha,\xi})
\prod_I\,\tilde F_\delta^\pm (s_I)\mbox{d}s_I
}
and with all the lines of integration still at $c_{I}=\frac{|I|}{12}+\eps$ for all $I$.
If $k\in\{0,1\}$ we move the line of integration $c_{S_3}$ and $c$ to $c_{S_3}=\frac14 -\frac 8{81}+\eps$ and $c=\frac2{81}+\eps$ without passing through any pole. Indeed, $\Re(\xi)=\frac2{27}+2\eps$ stays positive, $\Re(\alpha_1+\alpha_2+\alpha_3+\xi)=2+20\eps$ is unchanged and in particular  $\Re(\alpha_1+\alpha_2+\alpha_3+3\xi)>2$. Thus, for $k\in \{0,1\}$ we have
\est{
M_{\ba,k}^\pm(B)
&\ll {A^{\frac{31}9+12\eps}}B^{\frac{25}{27}+8\eps}\iint_{(c_{I})}(1+\max_I|s_I|) \prod_I\,\big|\tilde F_\delta^\pm (s_I)\big|\mbox{d}s_I\ll {A^{\frac{31}9+12\eps}}B^{\frac{25}{27}+8\eps}\delta^{-1-\eps}
}
since we are inside the region~\eqref{region_Q_bounded} and since $|\zeta(1-2\xi)|^3\ll 1+|\xi|$ for $\Re(\xi)=\frac2{27}+\eps$ by the convexity bound~\cite[(5.1.4)]{Tit}.

Now, consider the case $k\in\{2,3\}$. In those cases and for $\varepsilon$ small enough we move the line of integration $c_{S_3}$ to $c_{S_3}=\frac{19}{108}+\eps$ passing through the pole at $\alpha_1+\alpha_2+\alpha_3+(3-2k)\xi-2=0$, namely $s_{123} =\frac2k-\frac1k \sum_{I\neq S_3}((2-|I|)(3-k)+|I|)s_I$ with respect to~$s_{123}$, but without crossing the poles of the~$\zeta$ functions since we increased $\Re(\xi)$ from $2\varepsilon$ to $\frac1{27}+2\varepsilon$. The contribution of the integral on the new lines of integration is easily seen to be $O\left({A^{\frac{29}9+12\eps}}B^{\frac{25}{27}+8\eps}\delta^{-1-\eps}\right)$ and so we are left with examining the contribution of the residue. 

First we consider the case $k=2$. As mentioned above, with respect to $s_{123}$ the pole is located at $s_{123} =1- \sum_{I\neq S_3}s_I$. Also, we can replace each $\tilde F_\delta^\pm (s_I)$ with $\frac1{s_I}$ at a cost of committing an error which, by~\eqref{bounds_cut-off} and~\eqref{boundQ} since~\eqref{region_Q_bounded} is satisfied, is bounded by
\est{
&\ll B\max _{I'\subseteq S_3} \iint_{(c_{I})}{A^{3+6\Re(\xi)}}|\zeta(1-2\xi)|\min\left(\delta,\frac1{|s_{I'}|}\right)
\prod_{I\neq I'}\frac1{|s_I|}\cdot\prod_{I\neq S_3}\mbox{d}s_I
}
where $s_{123} :=1- \sum_{I\neq S_3}s_I$. In particular $\Re(s_{123})=\frac14-7\eps$. Also, $\Re(\xi)=6\eps$ and so $|\zeta(1-2\xi)|\ll |\xi|^{7\eps}$ by the convexity bound~\cite[(5.1.4)]{Tit}. Thus, the above is, for $I' \neq S_3$,
\es{\label{decb}
&\ll  \iint_{\substack{(c_{I})\\ |s_{I'}|\leqslant\delta^{-1}}}\frac{\delta {A^{3+36\eps}}B \max_{I}|s_I|^{7\eps}}{|1-\sum_{I\neq S_3}s_I|}
\prod_{I\neq I' ,S_3}\frac1{|s_I|}\cdot\prod_{I\neq S_3}\mbox{d}s_I+ \iint_{\substack{(c_{I})\\ |s_{I'}|\geqslant\delta^{-1}}}\frac{{A^{3+36\eps}}B\max_{I}|s_I|^{7\eps}}{|1-\sum_{I\neq S_3}s_I|}
\prod_{I\neq S_3}\frac{\mbox{d}s_I}{|s_I|}\\
&\ll {A^{3+36\eps}} B\delta^{1-7\eps}
}
{and a similar argument gives the same bound also for $I=S_3$}.
It follows that 
\est{
M_{\ba,2}^\pm(B)=W_\ba B+O\left({A^{\frac{31}9+12\eps}}B^{\frac{25}{27}+8\eps}\delta^{-1-\eps}+{A^{3+36\eps}}B\delta^{1-8\eps}\right)
}
where
\est{
W_\ba:=\frac1{(2\pi i)^7}\iint_{(c_{I})}\frac{\zeta(1+2\xi)^3\zeta(1-2\xi)}{2\big(1- \sum_{I\neq S_3}s_I\big)} \mathcal {Q}_{\ba,k}({\balpha,\xi})
\prod_{I\neq S_3}\frac{\mbox{d}s_I}{s_I}
}
and where $\boldsymbol{\alpha}$ and $\xi$ are given by (\ref{nn}) with $s_{123}$ replaced by $1- \sum_{I\neq S_3}s_I$.\\
\indent
Now, let us consider the case $k=3$. We proceed as above replacing $\tilde F_\delta^\pm (s_I)$ by $s_I^{-1}$ for all $I\neq \emptyset$. We can't do the same for $I=\emptyset$ yet because the pole giving the residue is, for $\eps$ small enough, at $s_{123}=\frac23-\frac13\sum_{I\neq S_3}|I|s_I$ with respect to $s_{123}$ which does not depend on $s$ and thus the integral with $\tilde F_\delta^\pm (s)$ replaced by $1/s$ is not absolutely convergent. We arrive to
\est{
M_{\ba,3}^\pm(B)&=\frac1{(2\pi i)^7}\iint_{(c_{I})}B^{1+\xi}\frac{\zeta(1+3\xi)\zeta(1+2\xi)^3}{2-\sum_{I\neq S_3}|I|s_I } \mathcal {Q}_{\ba,3}({\balpha,\xi})\,
s\,\tilde F_\delta^\pm (s)\prod_{I\neq S_3}\frac{\mbox{d}s_I}{s_I}\\
&\quad+O\left({A^{\frac{31}9+12\eps}}B^{\frac{25}{27}+8\eps}\delta^{-1-\eps}+{A^{3+36\eps}}B^{1+4\eps}\delta^{1-8\eps}\right),
}
where the lines of integration are still at $c_{I}=\frac{|I|}{12}+\eps$ for all $I\neq S_3$. Next, we move the lines of integration~$c_{I}$ to $c_{I}=\eps$ for all $I$ satisfying $|I|=1$. This has the effect of moving $\Re(\xi)$ from  $4\eps$ to $-\frac16+4\eps$ and $\Re(\frac23-\frac13\sum_{I\neq S_3}|I|s_I)$ from $\frac14-3\eps$ to $\frac13-3\eps$. In particular we stay on the right of the poles at $s_{I}=0$ for all $I$ and we encounter a quadruple pole at $s:=\frac13-\frac13\sum_{I \neq \emptyset,S_3}(3-|I|)s_I$ with respect to $s$. Note that we have $\xi=0$ at the quadruple pole. The contribution of the integrals on the new lines of integration is, as in~\eqref{bb}
\est{
\ll
{A^{2+24\eps}}B^{\frac56+4\eps}\iint_{(c_{I})}\frac{\max_I|s_I|^\frac34}{\big|2-\sum_{I\neq S_3}|I|s_I\big|} \,
|s\,\tilde F_\delta^\pm (s)|\prod_{I\neq S_3}\frac{\mbox{d}s_I}{|s_I|}\ll {A^{2+24\eps}}B^{\frac56+4\eps}\delta^{-\frac34-\eps},\\
}
since we are on the region~\eqref{region_Q_bounded} and since, by the convexity bound \cite[(5.1.4)]{Tit}, $|\zeta(1+3\xi)\zeta(1+2\xi)|^3\ll |\xi|^{\frac34}$ for $\Re(\xi)=-\frac16+4\eps$. As for the residue, we notice
that we can replace $s\tilde F_\delta^\pm (s)$ by $1$ at a cost of an error which is $O\left(\delta {A^{3+6\eps}}B^{1+5\eps}\right)$. Indeed, we can write the residue as an integral in $s$ along a circle of radius $\eps$ around $\frac13-\frac13\sum_{I\neq \emptyset, S_3}(3-|I|)s_I=O(\eps)$.
 We then use $s\tilde F_\delta^\pm (s)-1=O(|s|\delta)$ coming from (\ref{bounds_cut-off}) and bound trivially the integrals. Thus, we have
\est{
M_{\ba,3}^\pm(B)&=\frac1{(2\pi i)^6}\iint_{(c_{I})}\Res_{\xi=0}\Big(B^{1+\xi}\frac{\zeta(1+3\xi)\zeta(1+2\xi)^3}{3s_{123}\,s} \mathcal {Q}_{\ba,3}({\balpha,\xi})\Big)\,
\prod_{I\neq \emptyset, S_3}\frac{\mbox{d}s_I}{s_I}\\
&\quad+O\left(B^{\frac{25}{27}+8\eps}\delta^{-1-\eps}+B^{1+5\eps}\delta\right)
}
with 
\begin{equation}
s:=\frac13-\frac13\sum_{I\neq \emptyset,S_3}(3-|I|)s_I, \quad s_{123}:=\frac23-\frac13\sum_{I\neq S_3}|I|s_I,
\label{ss}
\end{equation}
$\boldsymbol{\alpha}$ given by (\ref{nn}) with $s_{123}$ and $s$ replaced by (\ref{ss})
and lines of integration which we can take to be $c_{I}=\frac1{12}$ for all $I\neq S_3,\emptyset$. Note that computing the residue in $\delta$ rather than in $s$ doesn't change the result.
Computing the residue then gives
\est{
M_{\ba,3}^\pm(B)&=BP_\ba(\log B)+O\left({A^{\frac{31}9+12\eps}}B^{\frac{25}{27}+8\eps}\delta^{-1-\eps}+{A^{3+36\eps}}B^{1+5\eps}\delta^{1-7\eps}\right),
}
where $P_\ba$ is a degree $3$ polynomial with leading coefficient
\es{\label{w3}
P_{\ba,3}&:=\frac1{(2\pi i)^6}\iint_{(\frac1{12})}\frac{\mathcal {Q}_{\ba,3}({\balpha,0})}{432\,s_{123}\,s} \,
\prod_{I\neq \emptyset, S_3}\frac{\mbox{d}s_I}{s_I}
}
with $s$ and $s_{123}$ given by (\ref{ss}), 
\es{\label{af}
\alpha_2=\tfrac13 (2 + 2 s_2 -s_1-s_3+ s_{12} + s_{23}  - 2 s_{13}),\qquad
\alpha_3=\tfrac13 (2 + 2 s_3 -s_1-s_2+ s_{13} + s_{23}  - 2 s_{12}),
}
and $\alpha_1=2-\alpha_2-\alpha_3$, where again we neglect the dependencies on $s_I$ for $I \neq \emptyset, S_3$ in the notations. Collecting the above results, we have
\est{
K_{\ba}^\pm(B)&=BP_\ba(\log B)+W_\ba B+O\left({A^{3+36\eps}}B^{1+5\eps}\delta^{1-7\eps}+A^{{14}}B^{\frac{25}{27}+13\eps}\delta^{-{21}-\eps}\right)\\
&=BP_\ba(\log B)+W_\ba B+O\left(A^{{14}}B^{{\frac{296}{297}}+14\eps}\right),
}
upon choosing $\delta={B^{-\frac{1}{297}}}$. Thus, it remains to show that $P_{\ba,3}=\frac1{144}\mathcal I_{\ba}{{\mathfrak S}'_\ba}$ with the notations of Proposition \ref{mlc}.

 First, we notice that, for $\alpha_1+\alpha_2+\alpha_3=2$ and $\xi=0$, $\mathcal {Q}_{\ba,3}(\balpha,\xi)$ simplifies to
\est{
\mathcal {Q}_{\ba,3}({\balpha,0})&=2\pi^\frac12{{\mathfrak S}'_\ba}\prod_{i=1}^3\frac{\Gamma(\frac{1-\alpha_i}2)}{|a_i|^{1-\alpha_i}\Gamma(\frac{\alpha_i}2)}.
}
Next, we use $\alpha_2,\alpha_3$ as new variables, 
writing $s_{2}$ and $s_3$ as
\begin{equation}
\begin{aligned}	
s_2=-2+2\alpha_2+\alpha_3+s_1+s_{13}-s_{23},\qquad
s_3=-2+2\alpha_3+\alpha_2+s_1+s_{12}-s_{23}.	
\end{aligned}
\label{cvv}
\end{equation}
Note that with this change of variables, we also have
\begin{equation}
\begin{aligned}	
s_{123} = 2-\alpha_2 - \alpha_3 - s_1 - s_{12} - s_{13},\qquad
s=3-2\alpha_2-2\alpha_3-2s_1-s_{12}-s_{13}+s_{23}
\end{aligned}
\label{cvv2}
\end{equation}
and remind that $\alpha_1=2-\alpha_2-\alpha_3$. The lines of integration for $\alpha_2,\alpha_3$ are at real part equal to $\frac23$. Since the Jacobian of the above change of variables is equal to $3$ we find, with (\ref{cvv}) and (\ref{cvv2})
\est{
P_{\ba,3}&=\frac1{(2\pi i)^2}\iint_{(\frac23)}\mathcal {Q}_{\ba,3}({\balpha,0})\frac1{(2\pi i)^4}
\iint_{(\frac1{12})}\frac{\mbox{d}s_1\mbox{d}s_{12}\mbox{d}s_{13}\mbox{d}s_{23}}{144\,s_{123}\,s s_2 s_3 s_1s_{12}s_{13}s_{23}} \,\mbox{d}\alpha_2\mbox{d}\alpha_3.
}
The inner integrals can be evaluated by moving each integral to $-\infty$ (or, equivalently, to $+\infty$), repeatedly applying the residue theorem. The inner integrals can be evaluated by a simple but tedious calculation, which can be readily checked by some mathematical software, whence obtaining
\est{
P_{\ba,3}
&=\frac1{(2\pi i)^2}\iint_{(\frac23)}
\frac{-\mathcal {Q}_{\ba,3}({\balpha,0})}{144\alpha_1\alpha_2\alpha_3 (\alpha_1-1)(\alpha_2-1)(\alpha_3-1)  } \,\mbox{d}\alpha_2\mbox{d}\alpha_3\\
&=
\frac{{{\mathfrak S}'_\ba}}{(2\pi i)^2}\iint_{(\frac2{3})}\frac{2\pi^\frac12}{144}
\prod_{i=1}^3\frac{\Gamma(\frac{1-\alpha_i}2)}{\alpha_i(1-\alpha_i)|a_i|^{1-\alpha_i}\Gamma(\frac{\alpha_i}2)} \,{\mbox{d}\alpha_2\mbox{d}\alpha_3}
}
with $\alpha_1=2-\alpha_2-\alpha_3$ and the result follows by the following lemma. 
\end{proof}
\begin{lemma}\label{integral}
For $\ba\in\Z_{\neq0}^3$ we have
\es{\label{altI}
\mathcal I_\ba=\frac1{(2\pi i)^2}\iint_{(\frac2{3})}2\pi^\frac12
\prod_{i=1}^3\frac{\Gamma(\frac{1-\alpha_i}2)}{\alpha_i(1-\alpha_i)|a_i|^{1-\alpha_i}\Gamma(\frac{\alpha_i}2)} {{\rm{d}}\alpha_2{\rm{d}}\alpha_3}
}
where $\alpha_1:=2-\alpha_2-\alpha_3$.
Moreover, $\mathcal I_{\bone}=\pi^2 + 24 \log 2-3$.
\end{lemma}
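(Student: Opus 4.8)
The plan is to first rewrite the $5$-dimensional integral $\mathcal I_\ba$ of~\eqref{defI} as the $2$-dimensional ``archimedean density'' integral
$$
\mathcal I_\ba=\iint_{\R^2}\frac{\mathrm{d}u_1\,\mathrm{d}u_2}{\max(|a_1|,|u_1|)\max(|a_2|,|u_2|)\max(|a_3|,|u_1+u_2|)},
$$
and then to evaluate this by Mellin inversion. For the first step I would integrate out the variables one family at a time. The integrand of~\eqref{defI} depends on $z$ only through $|a_3/z|=|a_3|/|z|$, and $\chi_{[0,|a_3/z|]}(w)$ forces $w\geq 0$; for $w\geq 0$ one has $\int_{-1}^1\chi_{[0,|a_3/z|]}(w)\,\mathrm{d}z=2\min(1,|a_3|/w)=2|a_3|/\max(|a_3|,w)$. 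Next I would substitute $u_i=a_ix_i/y_i$ for $i=1,2$ --- so that $w=u_1+u_2$, $\mathrm{d}x_i=(y_i/|a_i|)\,\mathrm{d}u_i$, and the constraint $x_i\in[-1,1]$ reads $y_i\leq|a_i|/|u_i|$ --- and integrate out $y_1,y_2\in[0,1]$: the Jacobian factors cancel the $1/(y_1y_2)$, and $\int_0^1\chi_{[0,|a_i|/|u_i|]}(y_i)\,\mathrm{d}y_i=|a_i|/\max(|a_i|,|u_i|)$. Finally, on $\{u_1+u_2\geq 0\}$ (where $\max(|a_3|,u_1+u_2)=\max(|a_3|,|u_1+u_2|)$) the integrand is invariant under $(u_1,u_2)\mapsto(-u_1,-u_2)$, which swaps the two half-planes, so the factor $2$ and the restriction can be dropped, giving the displayed formula. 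In particular, for $\ba=\bone$ the right-hand side is exactly the integral $\omega_\infty$ computed in Section~\ref{geom}, whence $\mathcal I_{\bone}=\pi^2+24\log 2-3$ at once.

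For~\eqref{altI}, I would expand two of the three factors via the Mellin transform of $x\mapsto\min(1,1/x)$, namely $1/\max(A,|t|)=\frac1{2\pi i}\int_{(c)}\frac{A^{s-1}}{s(1-s)}|t|^{-s}\,\mathrm{d}s$ for $0<c<1$, applied with $(A,t,s)=(|a_i|,u_i,\alpha_i)$ along $\Re(\alpha_i)=\tfrac23$ for $i=1,2$. After checking absolute convergence (so that Fubini applies), this writes $\mathcal I_\ba$ as a double contour integral along $\Re(\alpha_1)=\Re(\alpha_2)=\tfrac23$ of $\frac{|a_1|^{\alpha_1-1}|a_2|^{\alpha_2-1}}{\alpha_1(1-\alpha_1)\alpha_2(1-\alpha_2)}\,K(\alpha_1,\alpha_2)$, where $K(\alpha_1,\alpha_2):=\iint_{\R^2}|u_1|^{-\alpha_1}|u_2|^{-\alpha_2}\max(|a_3|,|u_1+u_2|)^{-1}\,\mathrm{d}u_1\,\mathrm{d}u_2$.

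It remains to evaluate $K$. I would change variables to $(u_1,w)$ with $w=u_1+u_2$ and integrate first in $u_1$: the $u_1$-integral is the convolution $(|\cdot|^{-\alpha_1}*|\cdot|^{-\alpha_2})(w)$, which by the classical formula for the Fourier transform of $|x|^{-s}$ as a tempered distribution --- equal to $\gamma_\infty(s)\,|\xi|^{s-1}$ with $\gamma_\infty(s):=\pi^{s-1/2}\Gamma(\tfrac{1-s}2)/\Gamma(\tfrac s2)$ --- and the convolution theorem equals $\gamma_\infty(\alpha_1)\gamma_\infty(\alpha_2)\gamma_\infty(2-\alpha_1-\alpha_2)\,|w|^{1-\alpha_1-\alpha_2}$ (valid since $\Re(\alpha_i)=\tfrac23<1$ and $\Re(\alpha_1+\alpha_2)=\tfrac43>1$). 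The remaining $w$-integral $\int_\R|w|^{1-\alpha_1-\alpha_2}\max(|a_3|,|w|)^{-1}\,\mathrm{d}w$ is elementary and, setting $\alpha_3:=2-\alpha_1-\alpha_2$ (so $1-\alpha_3=\alpha_1+\alpha_2-1$ and $\Re(\alpha_3)=\tfrac23$), equals $2|a_3|^{\alpha_3-1}/(\alpha_3(1-\alpha_3))$. Substituting back, collecting the factors, and using $\prod_{i=1}^3\pi^{\alpha_i-1/2}=\pi^{1/2}$ (because $\alpha_1+\alpha_2+\alpha_3=2$) turns the $\gamma_\infty$ factors into the ratios $\Gamma(\tfrac{1-\alpha_i}2)/\Gamma(\tfrac{\alpha_i}2)$, giving $\mathcal I_\ba=\frac1{(2\pi i)^2}\iint_{(\frac23)}2\pi^{1/2}\prod_{i=1}^3\frac{\Gamma(\frac{1-\alpha_i}2)}{\alpha_i(1-\alpha_i)|a_i|^{1-\alpha_i}\Gamma(\frac{\alpha_i}2)}\,\mathrm{d}\alpha_1\,\mathrm{d}\alpha_2$; since this integrand and the constraint $\alpha_1+\alpha_2+\alpha_3=2$ are symmetric under permuting the pairs $(\alpha_i,a_i)$, one may replace $\mathrm{d}\alpha_1\,\mathrm{d}\alpha_2$ by $\mathrm{d}\alpha_2\,\mathrm{d}\alpha_3$, which is exactly~\eqref{altI}.

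The routine parts are the $z$- and $y_i$-integrations and the final $w$-integral; the only points demanding care are the bookkeeping of the domains of absolute convergence --- one needs $\Re(\alpha_i)<1$ for all $i$ together with $\Re(\alpha_1+\alpha_2)>1$, all of which hold when $\Re(\alpha_i)=\tfrac23$, so that no contour shift is ever required --- and the correct normalization of the Fourier transform of $|x|^{-s}$, which is precisely what produces the gamma ratios and the power of $\pi$ in~\eqref{altI} (a useful consistency check). An alternative route for the ``moreover'' part would be to evaluate~\eqref{altI} directly at $\ba=\bone$ by shifting contours and summing residues, but passing through $\omega_\infty$ and Section~\ref{geom} is considerably cleaner.
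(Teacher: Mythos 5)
Your argument is correct, but it runs in the opposite direction from the paper's and is organised differently, so a comparison is worthwhile. The paper starts from the right-hand side of~\eqref{altI}: it expands $\pi^{1/2}\prod_i\Gamma(\frac{1-\alpha_i}2)/\Gamma(\frac{\alpha_i}2)$ into three terms via the Gamma identity (2.8) of~\cite{Sandro2}, inverts each term using the Mellin pairs~\eqref{mellin_tr} for $(1\mp x)^{-\alpha_3}$ and $(1+x)^{-\alpha_3}$, and reassembles the five-fold integral~\eqref{defI} piece by piece; the value of $\mathcal I_{\bone}$ is then obtained by reducing to Lemma~2.10 of~\cite{BBS}. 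You instead start from the left-hand side, first collapsing~\eqref{defI} to the clean two-dimensional identity
$$\mathcal I_\ba=\iint_{\R^2}\frac{{\rm d}u_1\,{\rm d}u_2}{\max(|a_1|,|u_1|)\max(|a_2|,|u_2|)\max(|a_3|,|u_1+u_2|)},$$
which is correct (the $z$-, $y_1$-, $y_2$-integrations and the symmetrisation all check out) and is a nice intermediate statement in its own right: it identifies $\mathcal I_{\bone}$ with the archimedean density $\omega_\infty$ of Section~\ref{geom}, so the ``moreover'' part becomes immediate, whereas the paper has to re-derive that link through the reflection/duplication formulae. You then Mellin-expand two of the three $\max$ factors and evaluate the remaining convolution. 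The two proofs are not as far apart as they look: your convolution identity $\int_\R|u|^{-\alpha_1}|w-u|^{-\alpha_2}\,{\rm d}u=\gamma_\infty(\alpha_1)\gamma_\infty(\alpha_2)\gamma_\infty(2-\alpha_1-\alpha_2)|w|^{1-\alpha_1-\alpha_2}$ is exactly the three-term Gamma identity the paper invokes, written as the beta integral $\int_\R|t|^{-\alpha_1}|1-t|^{-\alpha_2}\,{\rm d}t$. One presentational caveat: invoking ``the convolution theorem'' for the tempered distributions $|x|^{-\alpha_i}$ is not literally licensed without further justification (neither factor is in $L^1$ or $L^2$); in a careful writeup you should instead substitute $u_1=wt$ and evaluate the resulting beta integral directly over the three ranges $(-\infty,0)$, $(0,1)$, $(1,\infty)$ --- which is valid precisely under the conditions $\Re(\alpha_i)<1$, $\Re(\alpha_1+\alpha_2)>1$ that you already record. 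This is a matter of rigour in one step, not a gap in the argument.
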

\begin{proof}
For  $\alpha_1+\alpha_2+\alpha_3=2$, we have the $\Gamma$ identity (see (2.8) in~\cite{Sandro2})
\est{
\pi^\frac12\prod_{i=1}^3\frac{\Gamma(\frac{1-\alpha_i}2)}{\Gamma(\frac{\alpha_i}2)}=\sum_{i=1}^3\frac{\Gamma(1-\alpha_1)\Gamma(1-\alpha_2)\Gamma(1-\alpha_3)}{\Gamma(1-\alpha_i)\Gamma(\alpha_i)}
}
and, considering $\alpha_1=2-\alpha_2-\alpha_3$ as a function of $\alpha_2$ with $\alpha_3$ fixed, we have the Mellin transforms~\cite[(7.7.9) and (7.7.14-15)]{Tit2}
\es{\label{mellin_tr}
\frac1{2\pi i}\int_{(c)}\frac{\Gamma(1-\alpha_1)\Gamma(1-\alpha_2)\Gamma(1-\alpha_3)}{\Gamma(1-\alpha_i)\Gamma(\alpha_i)} x^{\alpha_2-1}\,\mbox{d}\alpha_2 =
\begin{cases}
(1-x)^{-\alpha_3}\chi_{[0,1]}(x) & \text{if $i=1$}\\
(x-1)^{-\alpha_3}\chi_{[1,\infty)}(x) & \text{if $i=2$}\\
(1+x)^{-\alpha_3} & \text{if $i=3$}\\
\end{cases}
}
for $c>0$ and $\Re(\alpha_3)<1$ if $i=1$, $c>0$, $\Re(\alpha_3)>0$ if $i=2$ and $0<c<\Re(\alpha_3)$ if $i=3$. Also, for $0<\Re(\alpha_2),\Re(\alpha_3)<1$, we have the identity
\est{
\int_{[0,1]^4}(x_1/y_1)^{1-\alpha_2-\alpha_3}(x_2/y_2)^{\alpha_2-1}(y_1y_2)^{-1}\,\mbox{d}x_1\mbox{d}x_2\mbox{d}y_1\mbox{d}y_2=\frac1{\alpha_1(\alpha_1-1)\alpha_2(\alpha_2-1)}.
}
It follows that, {in the case $i=1$},
\est{
&\frac1{(2\pi i)^2}\iint_{(\frac23)}
\frac{\Gamma(1-\alpha_2)\Gamma(1-\alpha_3)|a_1|^{1-\alpha_2-\alpha_3}|a_2|^{\alpha_2-1}|a_3|^{\alpha_3-1}}{\alpha_1\alpha_2\alpha_3 {(1-\alpha_1)(1-\alpha_2)(1-\alpha_3)}\Gamma(\alpha_1)  } \,\mbox{d}\alpha_2\mbox{d}\alpha_3\\
&\hspace{2em} =\int_{[0,1]^4 \atop |a_1|x_1/y_1-|{a_2}|x_2/y_2\geqslant 0}\frac1{2\pi i}\int_{(\frac23)}
\frac{(|a_1|x_1/y_1-|a_2|x_2/y_2)^{-\alpha_3}|a_3|^{\alpha_3-1}}{\alpha_3{(1-\alpha_3)} } \,\mbox{d}\alpha_3(y_1y_2)^{-1}\mbox{d}x_1\mbox{d}x_2\mbox{d}y_1\mbox{d}y_2\\
&\hspace{2em}=\int_{[0,1]^5\atop {0\leqslant}|a_1|x_1/y_1-|{a_2}|x_2/y_2\leqslant |a_3|/z}(|a_3|y_1y_2)^{-1}dx_1dx_2dy_1dy_2dz,\\
}
since $\frac1{2\pi i}\int_{(\frac23)}x^{-s}\frac{ds}{s(s-1)}=\int_{0}^1\chi_{[0,1/z)}(x)dz$, for $x>0$.
One evaluates similarly the cases arising from $i\in \{2,3\}$ and~\eqref{altI} easily follows.\\
\indent
 Finally, we notice that after using the Gamma identity $\cos(\frac {\pi s}2)\Gamma(s)={\pi^{1/2}2^{s-1}\Gamma(\frac s2)}/{\Gamma(\frac{1-s}2)}$, which follows from the reflection and duplication formulae for the Gamma function, the integral on the right hand side of~\eqref{altI} reduces to the one computed in Lemma~2.10 of~\cite{BBS}, where it was shown to be equal to $\pi^2 + 24 \log 2-3$  by means of a long calculation. One could also give a shorter proof of this identity (still requiring some computations) by writing $(\alpha_1(\alpha_1-1)\alpha_2(\alpha_2-1))^{-1}$ in terms of its ($1$-variable) Mellin transform, applying~\eqref{mellin_tr} and evaluating the resulting integrals.
\end{proof}

\section{Proof of Theorem~\ref{2}}\label{arith}
We now move to the proof of Theorem \ref{2}, namely counting points satisfying~\eqref{torsor_eq} and the coprimality conditions~\eqref{cop_cond}. First, we give three lemmata which respectively remove the extra coprimality conditions by mean of M\"obius inversion formula, show the convergence of the resulting sums, and compute the Euler product arising in the main term.
\begin{lemma}\label{mobius}
Let $f:\R^7\to\C$ a function of compact support. Then,
\est{
&\sumprime_{\substack{(\bx,\bz)\in \Z_{\neq0}^3\times\Z^4_{>0}}}f(x_1,z_1,x_2,z_2,x_3,z_3,z_4)\\
&\hspace{6em}=\sum_{\be,\bl\in\N^3 \atop \bd\in\N^6}\mu(\be,\bd,\bl)\sum_{\substack{(\bx,\bz)\in \Z_{\neq0}^3\times\Z^4_{>0}}}f(b_1x_1,c_1z_1,b_2x_2,c_2z_2,b_3x_3,c_3z_3,c_4z_4),\\
}
where here and below $\sum'$ indicates that the sum is restricted to satisfy the coprimality conditions 
\est{
(z_1,z_2)=(z_1,z_3)=(z_2,z_3)=1,\quad (x_1,z_2z_3z_4)=(x_2,z_1z_3z_4)=(x_3,z_1z_2z_4)=1
}
and where $\be:=(e_1,e_2,e_3)$, $\bd:=(d_{12},d_{13},d_{21},d_{23},d_{31},d_{32})$, $\bl:=(\ell_{12},\ell_{13},\ell_{23})$,
\es{\label{bc}
&b_1:= [e_1,d_{12},d_{13}],\ b_2:=[e_2,d_{21},d_{23}],\ b_3:=[e_3,d_{31},d_{32}]\\
&c_1:=[d_{21},d_{31},\ell_{12},\ell_{13}],\ c_2:=[d_{12},d_{32},\ell_{12},\ell_{23}],\ c_3:=[d_{13},d_{23},\ell_{13},\ell_{23}],\ c_4:=[e_1,e_2,e_3]
}
and, with a slight abuse of notation,
\est{
&\mu(\be,\bd,\bl):=\mu(\ell_{12})\mu(\ell_{13})\mu(\ell_{23}) \prod_{1\leqslant i\leqslant 3}\mu(e_i)\prod_{1\leqslant i,j\leqslant 3\atop i\neq j}\mu(d_{ij}).\\
}
\end{lemma}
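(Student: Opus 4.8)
The plan is a routine application of the Möbius inversion formula; the only real work is the combinatorial bookkeeping of which auxiliary divisor is forced to divide which variable. First I would note that, since a prime divides a product exactly when it divides one of the factors, each condition $(x_i,z_jz_kz_4)=1$ occurring in $\sumprime$ is equivalent to the conjunction $(x_i,z_j)=(x_i,z_k)=(x_i,z_4)=1$. Thus the primed summation is the same as summing over $(\bx,\bz)\in\Z_{\neq0}^3\times\Z_{>0}^4$ subject to twelve pairwise coprimality conditions: the three $(z_i,z_j)=1$ with $1\leqslant i<j\leqslant 3$, together with the nine conditions $(x_i,z_j)=1$ for $i\neq j$ and $(x_i,z_4)=1$ for $i\in\{1,2,3\}$ (in particular the pairs $(x_i,z_i)$ never appear).

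Next I would insert, for each such condition $(u,v)=1$, the identity $\sum_{m\mid u,\ m\mid v}\mu(m)=\mathbf 1_{(u,v)=1}$, using the letter $\ell_{ij}$ for the divisor attached to $(z_i,z_j)$, the letter $d_{ij}$ for the one attached to $(x_i,z_j)$, and the letter $e_i$ for the one attached to $(x_i,z_4)$. Expanding the product of these twelve sums and interchanging the order of summation, the constraint on each of the seven variables becomes a simultaneous divisibility by all the new variables attached to it, which is the same as divisibility by their least common multiple. A direct inspection of the list shows that $x_i$ is then constrained to be a multiple of $[e_i,d_{ij},d_{ik}]=b_i$, that $z_1$ is a multiple of $[d_{21},d_{31},\ell_{12},\ell_{13}]=c_1$ (and analogously for $z_2,z_3$ with $c_2,c_3$), and that $z_4$ is a multiple of $[e_1,e_2,e_3]=c_4$ — precisely the quantities introduced in~\eqref{bc}. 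Reindexing $x_i=b_ix_i'$ with $x_i'\in\Z_{\neq0}$ and $z_j=c_jz_j'$ with $z_j'\in\Z_{>0}$ — these being bijections onto $\Z_{\neq0}$, resp.\ $\Z_{>0}$, compatible with the sign restrictions — turns the inner sum into the unrestricted sum of $f(b_1x_1',c_1z_1',\dots,c_4z_4')$, while the accumulated signs multiply to exactly $\mu(\be,\bd,\bl)$ as defined in the statement. Relabelling the primed variables back to unprimed ones yields the asserted identity.

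All the interchanges of summation and the reindexings above will be trivially legitimate because $f$ has compact support: only finitely many lattice points $(\bx,\bz)$ contribute, and for each of them only finitely many $\be,\bd,\bl$ (bounded in terms of the coordinates) give a nonzero term, so every sum in sight is in fact finite. I do not expect any genuine obstacle; the single point that demands care is to check that, for each of the seven variables, the list of attached divisors — and hence the least common multiple appearing in~\eqref{bc} — is complete and correct, which is an immediate verification against the twelve conditions.
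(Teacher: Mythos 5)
Your argument is correct and is exactly the route the paper takes (the paper simply states that the lemma is an immediate application of Möbius inversion, which you have spelled out in full). Your bookkeeping of the twelve pairwise conditions and of the least common multiples $b_i,c_j$ matches \eqref{bc}, and the finiteness justification via the compact support of $f$ (each auxiliary divisor divides one of the $b_i$ or $c_j$, hence is bounded) is sound.
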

\begin{proof}
This is just an immediate application of M\"obius' inversion formula.
\end{proof}

\begin{lemma}\label{b_dirichlet}
With the same notations as in Lemma~\ref{mobius} and for 
\es{\label{cond_b_dirichlet}
& \Re(u_i),\Re(w_i),\Re(u_i+w_4-1-\eps)\geqslant\Re(\kappa),\ \forall i\in \{1,2,3\};\quad \Re(w_4)\geqslant0;\quad  \Re(\kappa)\geqslant 0;\\
&\Re(u_i+w_j-\kappa)>1+\eps,\ \forall i,j\in\{1,2,3\}, i\neq j;\hspace{0.75em}
\Re(w_i+w_j-\kappa)>1+\eps,\ \forall i,j\in\{1,2,3\}, i<j;
}
we have
\es{\label{bound1}
\sum_{\be,\bl\in\N^3 \atop \bd\in\N^6}\frac{\max(b_1c_1,b_2c_2,b_3c_3)^{\kappa}}{b_1^{u_1}b_2^{u_2}b_3^{u_3}c_1^{w_1}c_2^{w_2}c_3^{w_3}c_4^{w_4}}\ll 1.
}
\end{lemma}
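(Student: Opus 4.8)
\textbf{Proof proposal for Lemma~\ref{b_dirichlet}.} The plan is to bound the series in~\eqref{bound1} by first discarding the maximum and then recognising what is left as a genuine (finite) Euler product, using the fact that each of $b_1,b_2,b_3,c_1,c_2,c_3,c_4$ in~\eqref{bc} is a squarefree LCM of squarefree numbers indexed by the variables $\be,\bd,\bl$, so that prime by prime each variable $e_i,d_{ij},\ell_{ij}$ is either $1$ or $p$. Concretely, I would first bound $\max(b_1c_1,b_2c_2,b_3c_3)^{\kappa}\leqslant (b_1c_1)^{\kappa}+(b_2c_2)^{\kappa}+(b_3c_3)^{\kappa}$ (since $\Re(\kappa)\geqslant0$), and treat each of the three resulting sums separately; by the obvious symmetry it suffices to bound, say, the term with $(b_1c_1)^{\kappa}$, which amounts to replacing $(u_1,w_1)$ by $(u_1-\kappa,w_1-\kappa)$ in the remaining series and showing the latter converges. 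So it is enough to prove
\est{
\sum_{\be,\bl\in\N^3 \atop \bd\in\N^6}\frac{1}{b_1^{v_1}b_2^{v_2}b_3^{v_3}c_1^{x_1}c_2^{x_2}c_3^{x_3}c_4^{x_4}}\ll 1
}
whenever $\Re(v_i),\Re(x_i)\geqslant 0$ for all $i$, $\Re(v_i+x_4-1-\eps)\geqslant 0$, and all the ``mixed'' real parts $\Re(v_i+x_j)$, $\Re(x_i+x_j)$ exceed $1+\eps$ in the relevant index ranges; here $(v_1,x_1)=(u_1-\kappa,w_1-\kappa)$ and $(v_i,x_i)=(u_i,w_i)$ for $i\geqslant2$, and one checks these inequalities follow from~\eqref{cond_b_dirichlet}.

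Next I would factor the series into an Euler product. Because all of $e_i,d_{ij},\ell_{ij}$ are Möbius-supported (squarefree) and $b_i,c_i,c_4$ are LCMs, the summand is multiplicative in the $9$-tuple $(\be,\bd,\bl)$ once we sum over prime powers; more precisely each variable contributes independently at each prime, taking value $1$ or $p$. Thus the series equals $\prod_p \pr{1+R_p}$ where $R_p$ is a finite sum over the $2^9-1$ nonzero choices of which of the nine variables equal $p$ at that prime. For each such choice, the exponent of $p$ appearing in the denominator is a sum of real parts of the $v_i,x_i$ governed by which of $b_1,b_2,b_3,c_1,c_2,c_3,c_4$ pick up a factor $p$ from~\eqref{bc}. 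The key structural observation is that a single variable being equal to $p$ forces at least two of the seven quantities $b_i,c_j,c_4$ to be divisible by $p$ unless it is one of $e_i$ acting through both $b_i$ and $c_4$ — so in every case the corresponding term in $R_p$ is $O(p^{-(1+\eps)})$ by the hypotheses: for a single $d_{ij}$ we get a factor $p^{-\Re(v_i+x_j)}$ or $p^{-\Re(x_i+x_j)}$ with exponent $>1+\eps$; for a single $e_i$ we get $p^{-\Re(v_i+x_4)}$ with exponent $\geqslant 1+\eps$; for a single $\ell_{ij}$ we get $p^{-\Re(x_i+x_j)}$; and combinations of two or more variables only make the exponent larger. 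Hence $R_p\ll p^{-1-\eps}$, so $\prod_p(1+R_p)$ converges absolutely and is $O(1)$, uniformly in the parameters as long as they satisfy~\eqref{cond_b_dirichlet}.

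The main obstacle is the bookkeeping in the Euler factor: one has to verify, for each of the admissible patterns of which variables equal $p$, that every one of the seven LCM-quantities that receives a factor of $p$ does so in a way that the sum of the two (or more) corresponding exponents is controlled by one of the listed inequalities, and in particular that no ``cheap'' pattern slips through with a total exponent $\leqslant 1$. This is a finite but somewhat tedious case check over subsets of the nine indices; I would organise it by first noting that any pattern in which some $d_{ij}$ or $\ell_{ij}$ is nonzero is immediately fine (two of the $c$'s or a $b$ and a $c$ get hit), and then separately handling the patterns supported only on $\be$, where the relevant bound is the $\Re(v_i+x_4)\geqslant 1+\eps$ hypothesis coming from $b_i$ and $c_4$ simultaneously. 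A clean way to avoid enumerating all $2^9-1$ cases is to bound the series termwise by a product of seven simpler geometric-type series — one for each ``edge'' $(v_i,x_j)$, $(x_i,x_j)$, $(v_i,x_4)$ appearing in the inequalities — after distributing the Möbius constraints; but I expect the direct prime-by-prime estimate above to be the most transparent and is the route I would write up.
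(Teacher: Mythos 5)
Your proposal is correct and follows essentially the same route as the paper: reduce the maximum to the case of a single term (the paper restricts by symmetry to $b_1c_1\geqslant b_2c_2,b_3c_3$ where you bound the max by the sum, but both amount to replacing $(u_1,w_1)$ by $(u_1-\kappa,w_1-\kappa)$), then factor the remaining series as an Euler product over $p$ using the squarefree/LCM structure of~\eqref{bc} and check that every nontrivial term of the local factor carries an exponent exceeding $1$ thanks to~\eqref{cond_b_dirichlet}. The prime-by-prime bookkeeping you describe is exactly what the paper records in~\eqref{e_p} and~\eqref{euler_exp}.
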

\begin{proof}
We have the formal Euler product formula
\es{\label{e_p}
\sum_{\be,\bl\in\N^3\atop \bd\in\N^6}\frac{1}{b_1^{u_1}b_2^{u_2}b_3^{u_3}c_1^{w_1}c_2^{w_2}c_3^{w_3}c_4^{w_4}}=\prod_p\bigg(\sum_{\be',\bl'\in\Z_{\geqslant 0}^3,\bd'\in\Z_{\geqslant 0}^6}{\hspace{-0.5em}p^{-(b_1'u_1+b_2'u_2+b_3'u_3+c_1'{w_1}+c_2'{w_2}+c_3'{w_3}+c_4'{w_4})}}\bigg),
}
where 
\es{\label{euler_exp}
&b_1'=\max(e_1',d_{12}',d_{13}'),\ b_2'=\max(e_2',d_{21}',d_{23}'),\ b_3'=\max(e_3',d_{31}',d_{32}'),\ c_4'=\max(e_1',e_2',e_3')\\
&c_1'=\max(d_{21}',d_{31}',\ell_{12}',\ell_{13}'),\ c_2'=\max(d_{12}',d_{32}',\ell_{12}',\ell_{23}'),\ c_3'=\max(d_{13}',d_{23}',\ell_{13}',\ell_{23}').
}
Assuming $u_i,w_i\in\R_{\geqslant 0}$ for $i\in\{1,2,3\}$, we have that the $p$-factor of the Euler product is 
\est{
1+O\Big(\sum_{1\leqslant i\leqslant 3}p^{-(u_i+w_4)}+\sum_{1\leqslant i,j\leqslant  3,\atop i\neq j}p^{-(u_i+w_j)}+\sum_{1\leqslant i< j\leqslant 3}p^{-(w_i+w_j)}\Big)
}
and thus both sides of~\eqref{e_p} converge whenever each of the exponents above are smaller than $-1$. 

As for~\eqref{bound1}, we notice that by symmetry it suffices to consider the contribution to the series coming from the terms with $b_1c_1\geqslant b_2c_2, b_3c_3$. This is less or equal than the left hand side of~\eqref{e_p} with $(u_1,w_1)$ replaced by $(u_1-\kappa,w_1-\kappa)$ and the lemma follows.
\end{proof}
\begin{lemma}\label{euler}
With the same notations as in Lemma~\ref{mobius}, let
\es{\label{eulerCa}
\mathfrak S^*_{\ba}:=\sum_{ q\geqslant 1}\frac{\varphi(q)}{q^{3}} \sum_{\be,\bl\in\N^3 \atop \bd\in\N^6}\frac{(a_1b_1c_1,q)(a_2b_2c_2,q)(a_3b_3c_3,q)}{b_1b_2b_3c_1c_2c_3c_4}.
}
Then $\mathfrak S^*_{(1,1,1)}=\mathfrak S_2$ with $\mathfrak S_2$ as in Theorem~\ref{2}.
\end{lemma}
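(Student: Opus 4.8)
The plan is to evaluate $\mathfrak S^*_{(1,1,1)}$ as an Euler product and to verify that its local factor at each prime $p$ coincides with that of $\mathfrak S_2$, namely $(1-p^{-1})^4(1+4p^{-1}+p^{-2})$.

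First I would note that the summand of~\eqref{eulerCa} --- understood to carry the M\"obius weight $\mu(\be,\bd,\bl)$ of Lemma~\ref{mobius} --- is multiplicative as a function of the joint variable $(q,\be,\bd,\bl)$: each $b_i$ and $c_j$ is the least common multiple of a fixed subset of $\{e_i,d_{ij},\ell_{ij}\}$, so the $p$-adic valuation of every factor in~\eqref{eulerCa} is determined by the $p$-adic valuations of $q$ and of the $e_i,d_{ij},\ell_{ij}$, and $\gcd(\,\cdot\,,q)$, $\mathrm{lcm}$, $\varphi$, $\mu$ all split primewise. Absolute convergence of the series is guaranteed by Lemma~\ref{b_dirichlet}. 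Hence $\mathfrak S^*_{(1,1,1)}=\prod_p\sigma_p$, and since $\mathfrak S_2=\prod_p(1-p^{-1})^4(1+4p^{-1}+p^{-2})$ it is enough to prove, for every prime $p$, that $\sigma_p=(1-p^{-1})^4(1+4p^{-1}+p^{-2})$.

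Next I would set up $\sigma_p$ explicitly. Writing $m=v_p(q)$ and $\epsilon_i=v_p(e_i)$, $\delta_{ij}=v_p(d_{ij})$, $\lambda_{ij}=v_p(\ell_{ij})$, the M\"obius weight forces these twelve exponents into $\{0,1\}$ and contributes the sign $(-1)^{\sum_i\epsilon_i+\sum_{i\neq j}\delta_{ij}+\sum_{i<j}\lambda_{ij}}$, while $m$ runs over $\mathbb{Z}_{\geq 0}$ with weight $\varphi(p^m)/p^{3m}$, equal to $1$ for $m=0$ and to $p^{-2m}(1-p^{-1})$ for $m\geq 1$. Denoting by $\beta_1,\beta_2,\beta_3$ and $\gamma_1,\dots,\gamma_4$ the exponents $b_1',b_2',b_3'$ and $c_1',\dots,c_4'$ of~\eqref{euler_exp} formed from these $\{0,1\}$-valued exponents, and recalling that for $\ba=(1,1,1)$ the numerator in~\eqref{eulerCa} has $p$-valuation $\sum_{i=1}^3\min(\beta_i+\gamma_i,m)$, one gets
\[
\sigma_p=\sum_{m\geq 0}\frac{\varphi(p^m)}{p^{3m}}\sum_{\epsilon_\bullet,\delta_\bullet,\lambda_\bullet\in\{0,1\}}(-1)^{\sum\epsilon+\sum\delta+\sum\lambda}\;p^{\,\sum_{i=1}^3\min(\beta_i+\gamma_i,\,m)-\beta_1-\beta_2-\beta_3-\gamma_1-\gamma_2-\gamma_3-\gamma_4}.
\]
Since $\beta_i+\gamma_i\leq 2$, the inner signed sum takes only three values --- call them $T_0,T_1,T_\infty$ according as $m=0$, $m=1$, or $m\geq 2$ --- and summing the geometric series in $m$ yields
\[
\sigma_p=T_0+(1-p^{-1})\Bigl(p^{-2}T_1+\tfrac{p^{-4}}{1-p^{-2}}\,T_\infty\Bigr).
\]

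It then remains to evaluate $T_0,T_1,T_\infty$, each a finite alternating sum over $(\epsilon_i,\delta_{ij},\lambda_{ij})\in\{0,1\}^{12}$, and to check that the resulting $\sigma_p$ simplifies to $(1-p^{-1})^4(1+4p^{-1}+p^{-2})=1-9p^{-2}+16p^{-3}-9p^{-4}+p^{-6}$. The computation is purely mechanical: one sums out the variables one block at a time, first $\lambda_{12},\lambda_{13},\lambda_{23}$ (which occur only inside $\gamma_1,\gamma_2,\gamma_3$), then $\epsilon_1,\epsilon_2,\epsilon_3$ (occurring inside $\beta_1,\beta_2,\beta_3$ and $\gamma_4$), then $d_{12},\dots,d_{32}$, each step being a two-term alternating sum of powers of $p$; the whole thing is readily confirmed with a computer algebra system. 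I expect the bookkeeping of this last step to be the main obstacle, since the six ``lcm exponents'' $\beta_i,\gamma_j$ have overlapping supports among the twelve underlying variables, so $T_0,T_1,T_\infty$ do not factor and the cancellations must be tracked with care. As a consistency check, $\sigma_p$ must equal the local density $\omega_p=\#O(\mathbb{F}_p)/p^6=(1-p^{-1})^4(1+4p^{-1}+p^{-2})$ computed in Section~\ref{geom}; in fact one could alternatively prove the lemma by recognizing $\sigma_p$ as this $p$-adic density attached to the versal torsor $O$, though a rigorous such argument essentially reduces to the same local computation.
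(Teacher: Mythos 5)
Your proposal is correct and follows essentially the same route as the paper: both reduce $\mathfrak S^*_{(1,1,1)}$ to an Euler product whose local factor is a finite alternating sum over the twelve $\{0,1\}$-valued exponents (you correctly supply the M\"obius weight $\mu(\be,\bd,\bl)$ that is implicit in~\eqref{eulerCa}), and both defer the resulting finite computation to a mechanical check. The only cosmetic difference is that the paper first observes, as in Lemma~2.7 of~\cite{BBS}, that the terms with $v_p(q)\geqslant 2$ cancel because their exponent is independent of $d_{12}'$ --- in your notation this is exactly the statement $T_\infty=0$ --- whereas you carry $T_\infty$ along and sum the geometric series.
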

\begin{proof}
With the same notations as in~\eqref{euler_exp}, we have that for $\ba=\bone$, the right hand side of~\eqref{eulerCa} is equal to
$$
\prod_p\Bigg(\sum_{\substack{q'\in\Z_{\geqslant 0},\,\bd'\in\{0,1\}^6\\ \be',\bl'\in\{0,1\}^3}}{\frac{(p-1)^{\rho_{q'}}(-1)^{e_1'+e_2'+e_3'+d_{12}'+d_{13}'+d_{21}'+d_{23}'+d_{31}'+d_{32}'+\ell_{12}'+\ell_{13}'+\ell_{23}'}}{p^{2q'+\rho_{q'}+b_1'+b_2'+b_3'+c_1'{}+c_2'{}+c_3'{}+c_4'{}-\min(q',b_1'+c_1')-\min(q',b_2'+c_2')-\min(q',b_3'+c_3')}}}\Bigg)
$$
where $\rho_{0}=0$ and $\rho_{q'}=1$ if $q'\geqslant 1$. As in lemma~2.7 of~\cite{BBS}
we observe that the terms with $q'\geqslant 2$ do not contribute. Indeed, if $q'\geqslant 2$, then $\min(q',b_i'+c_i')=b_i'+c_i'$ so that the exponent of $p$ above is $2q'+c_4'$ and so it does not depend on $d_{12}'$. In particular the contributions of $d_{12}'=0$ and $d_{12}'=1$ cancel out. After restricting the sum over $q'$ to $q'\in\{0,1\}$, we are just left with performing a finite computation which gives the claimed Euler product formula for $\mathfrak S^{*}_{(1,1,1)}$.
\end{proof}

We are now ready to prove our Theorem~\ref{2}.
\begin{proof}[Proof of Theorem~\ref{2}]
Let $K^{*}_\ba(B)$ as in~\eqref{def_K} but imposing also the coprimality conditions~\eqref{cop_cond}. In particular, by~\eqref{expwh} we have 
\es{\label{fs1}
&N_{\widehat{W}_3,\widehat{H}}(B)=K_{(1,1,1)}^*(B)+N'_{\widehat{W}_3,\widehat{H}}(B),
}
where $N'_{\widehat{W}_3,\widehat{H}}(B)$ counts the number of points in $\widehat{W}_3$ of height less than $B$  which also satisfy $x_1x_2x_3=0$. 
Now, for $x_3=0$ (and thus $y_3=1$) then~\eqref{eq} reduces to $\frac{x_1}{y_1}+\frac{x_2}{y_2}=0$. Since we have $(x_1,y_1)=(x_2,y_2)=1$, then $x_1=-x_2$, $y_1=y_2$ and thus
\es{\label{fs2}
N'_{\widehat{W}_3,\widehat{H}}(B)=1+3\sum_{x\in\Z_{\neq0},y\in\N,\,(x,y)=1\atop |x|,y\leqslant B^{1/2}}1=\frac{36}{\pi^2}B+O\left(B^\frac12\log B\right).
}
In particular, it suffices to prove an asymptotic formula with power saving error term for $K_{(1,1,1)}^*(B)$. Since it doesn't introduce any difficulties, in the following we shall compute an asymptotic formula for~$K_{\ba}^*(B)$ for all $\ba \in \mathbb{Z}^3_{\neq 0}$.

Let $0<\delta<\frac12$. Using the same approach used for Proposition~\ref{mlc} and with the notations of Lemma~\ref{mobius} and of the proof of Proposition~\ref{mlc}, we need to compute an asymptotic formula for 
\est{
&K_{\ba}^{*\pm}(B)=\sumprime_{(\bx,\bz)\in \Z_{\neq0}^3\times\Z^4_{>0}\atop a_1x_1z_1+a_2x_2z_2+a_3x_3z_3=0} \prod_{I\seq S_3}F^{\pm}_{\delta}\left(\frac{(z_1z_2z_3z_4)^{|J|}}{B}\prod_{i\in I}|x_i|\prod_{j\in J}z_j^{-1}\right).
}
With the notations of Lemma~\ref{mobius}, we can rewrite this as
\est{
&K_{\ba}^{*\pm}(B)=\sum_{\be,\bl\in\N^3,\atop \bd\in\N^6}\mu(\be,\bd,\bl)\hspace{-1em}\sum_{(\bx,\bz)\in \Z_{\neq0}^3\times\Z^4_{>0}\atop a_1^*x_1z_1+a_2^*x_2z_2+a_3^*x_3z_3=0} \prod_{I\seq S_3}F^{\pm}_{\delta}\left(\frac{(c_1z_1c_2z_2c_3z_3c_4z_4)^{|J|}}{B}\prod_{i\in I}|b_ix_i|\prod_{j\in J}(c_jz_j)^{-1}\right),
}
where $\ba^*:=(a_1^*,a_2^*,a_3^*)=(a_1b_1c_1,a_2b_2c_2,a_3b_3c_3)$ with $b_i,c_i$ as in~\eqref{bc}. Thus, proceeding as in Proposition~\ref{mlc} and using the same notations and lines of integrations we find the following expression for $K_{\ba}^{*\pm}(B)$
\est{
&\sum_{\be,\bl\in\N^3,\atop \bd\in\N^6} \frac{\mu(\be,\bd,\bl)}{(2\pi i)^8}\iint_{(c_{I})}\frac{B^{\frac12(\alpha_1+\alpha_2+\alpha_3-\xi)}\zeta(\tfrac12(\alpha_1+\alpha_2+\alpha_3+3\xi))\mathcal A_{\ba^*}(\balpha-\xi,\balpha+\xi)}{c_4^{\frac12(\alpha_1+\alpha_2+\alpha_3+3\xi)}b_1^{\alpha_1-\xi}b_2^{\alpha_2-\xi}b_3^{\alpha_3-\xi}c_1^{\alpha_1+\xi}c_2^{\alpha_2+\xi}c_3^{\alpha_3+\xi}}
\prod_I\,\tilde F_\delta^\pm (s_I)\mbox{d}s_I.
}
 Notice that by~\eqref{bound1} with $\kappa=0$ the outer series converges absolutely. We keep following the same approach as the proof of Proposition~\ref{mlc} splitting $\mathcal A_{\ba}$ into $\mathcal {M}_{\ba}+\mathcal {E}_{\ba}$ and thus $K^{*\pm}_\ba(B)$ into $M_{\ba}^{*\pm}(B)+E_{\ba}^{*\pm}(B)$. We can treat $E_{\ba}^{*\pm}(B)$ as above with the only difference that in this case we move $c_{S_3}$ to $\frac14-2\gamma+6\eps$, where $\gamma:=\frac{391 - \sqrt{152737}}{108}$ (this value is the smallest one can take under the condition that the inequalities~\eqref{cond_b_dirichlet} are satisfied). With this choice,~\eqref{ml_bound} and~\eqref{bound1} give the bound
\est{
E_{\ba}^{*\pm}(B)&\ll\! \sum_{\be,\bl\in\N^3,\atop \bd\in\N^6}\frac{\max(a_1b_1c_1,a_2b_2c_2,a_3b_3c_3)^{\frac{378\gamma}{2-27\gamma}}}{{c_4^{1+12\eps}(b_1b_2b_3)^{\frac{2}{3}-2\gamma+9\eps}c_1^{\frac23}c_2^{\frac23}c_3^{\frac23}}} 
B^{1-2\gamma+13\eps}\!\iint_{(c_{I})}\!\!\big(1+\max_{I}|s_I|\big)^{{\frac{567\gamma}{2-27\gamma}}}
\prod_I\, \big|\tilde F_\delta^\pm (s_I)\big| \mbox{d}s_I\\
&\ll_A B^{1-2\gamma+13\eps}\delta^{-{\frac{567\gamma}{2-27\gamma}}-\eps},
}
where $A:=\max_{1\leqslant i\leqslant 3} |a_i|$.
As for $M_{\ba}^{*\pm}(B)$, we treat it exactly as in Proposition~\ref{mlc}, splitting it into $M_{\ba}^{*\pm}(B)=\sum_{k=0}^3E_{\ba,k}^{*\pm}(B)$. As above we have that $M_{\ba,0}^{*\pm}(B),M_{\ba,1}^{*\pm}(B)\ll_{A} B^{\frac{25}{27}+8\eps}\delta^{-1-\eps}$
 where the sums in the error terms are immediately seen to be convergent by~\eqref{boundQ} and~\eqref{bound1}. For $M_{\ba,3}^{*\pm}(B)$ we find similarly as in the proof of Proposition~\ref{mlc}
\est{
M_{\ba,3}^{*\pm}(B)&=\sum_{\be,\bl\in\N^3,\atop \bd\in\N^6}\frac{\mu(\be,\bd,\bl)}{(2\pi i)^6}\iint_{(c_{I})}\Res_{\xi=0}\bigg(\frac{B^{1+\xi}\zeta(1+3\xi)\zeta(1+2\xi)^3\mathcal {Q}_{\ba^*,3}({\balpha,\xi})}{3s_{123}\,s\,c_4^{1+3\xi}b_1^{\alpha_1-\xi}b_2^{\alpha_2-\xi}b_3^{\alpha_3-\xi}c_1^{\alpha_1+\xi}c_2^{\alpha_2+\xi}c_3^{\alpha_3+\xi}} \bigg)\,
\prod_{I\neq \emptyset, S_3}\frac{\mbox{d}s_I}{s_I}\\
&\quad+O_{A}\left(B^{\frac{25}{27}+8\eps}\delta^{-1-\eps}+B^{1+5\eps}\delta^{1-7\eps}\right)
}
with $s:=\frac13-\frac13\sum_{I\neq \emptyset,S_3}(3-|I|)s_I$, $s_{123}:=\frac23-\frac13\sum_{I\neq S_3}|I|s_I$, lines of integration $c_{I}=\frac1{12}$ for all $I\neq S_3,\emptyset$, $\mathcal{Q}_{\ba^*,3}$ as in~\eqref{defQ} and $\boldsymbol{\alpha}$ and $\xi$ given by (\ref{nn}) with $s_{123}$ and $s$ replaced by (\ref{ss}). 
Computing the residue then gives
\est{
M_{\ba,3}^\pm(B)&=BP^*_{\ba}(\log B)+O_{A}\left(B^{\frac{25}{27}+8\eps}\delta^{-1-\eps}+B^{1+5\eps}\delta^{1-7\eps}\right)
}
where $P^*_{\ba}$ is a degree $3$ polynomial with leading constant
\est{
P^*_{\ba,3}&:=\sum_{\be,\bl\in\N^3,\atop \bd\in\N^6}\frac{\mu(\be,\bd,\bl)}{(2\pi i)^6}\iint_{(\frac1{12})}\frac{\mathcal {Q}_{\ba^*,3}({\balpha,0})}{432\,s_{123}\,s\,c_4b_1^{\alpha_1}b_2^{\alpha_2}b_3^{\alpha_3}c_1^{\alpha_1}c_2^{\alpha_2}c_3^{\alpha_3}} \,
\prod_{I\neq \emptyset, S_3}\frac{\mbox{d}s_I}{s_I}
}
and $\alpha_2,\alpha_3$ as in~\eqref{af} and $\alpha_1=2-\alpha_2-\alpha_3$. Now, 
\est{
\mathcal {Q}_{\ba^*,3}({\balpha,0})&=2\pi^\frac12\sum_{\ell=1}^\infty\frac{(a_1b_1c_1,\ell)(a_2b_2c_2,\ell)(a_3b_3c_3,\ell)\varphi(\ell)}{\ell^{3}}\prod_{i=1}^3\frac{\Gamma(\frac{1-\alpha_i}2)}{|a_i|^{1-\alpha_i}\Gamma(\frac{\alpha_i}2)}.
}
and hence by the same computation as in Proposition~\ref{mlc} we find $P^*_{\ba,3}=\frac1{144}\mathfrak S^*_\ba \mathcal  I_\ba$ with $\mathfrak S^*_\ba$ as in Lemma~\ref{euler}.

Finally, we treat $M_{\ba,2}^{*\pm}(B)$ exactly as in Proposition~\ref{mlc} and so, collecting the various asymptotics and bounds, we arrive to
\est{
K_{\ba}^{*\pm}(B)&=BP^*_{\ba}(\log B)+W^*_\ba B+O_A\left(B^{1+5\eps}\xi^{1-8\eps}+B^{1-2\gamma+13\eps}\delta^{-{\frac{567\gamma}{2-27\gamma}}-7\eps}\right)
}
for a certain $W^*_\ba \in \R$. Choosing $\delta=B^{{-\frac{\gamma (2 -27 \gamma)}{1 + 270 \gamma}}}$  we obtain 
\est{
 K^*_{\ba}(B):=BP_{\ba}^*(\log B)^3+W^*_\ba B+O_A\left(B^{1-{\frac{\gamma (2 -27 \gamma)}{1 + 270 \gamma}}+13\eps}\right)
}
In particular, by~\eqref{fs1},~\eqref{fs2} and Lemma~\ref{euler} and recalling that $\gamma={\frac{391 - \sqrt{152737}}{108}}$ we obtain Theorem~\ref{2} for all $\xi_2\leqslant{\frac{1165 - 3 \sqrt{152737}}{3264} = 0.00228169\dots}$ 
\end{proof}

\section{Proof of Theorem~\ref{mtt1}}\label{pmt}
By~\eqref{ssf} and renaming for simplicity $z_6=d_1$, $z_5=d_2$, $z_3=d_3$ and $z_4$ by $z_3$  we have to count the solutions to 
\est{
x_1d_1+x_2d_2+x_3d_3=0
}
where $\bx\in\Z_{\neq0}^3, \bd,\bz\in\N^3$ with 
$\bx=(x_1,x_2,x_3),\bd=(d_1,d_2,d_3),\bz=(z_1,z_2,z_3)$ 
subject to the coprimality conditions 
\es{\label{cop2}
&(d_i,d_j)=(z_i,z_j)=(d_k,z_k)=1\quad \forall\ i,j,k\in\{1,2,3\},\ i\neq j,\\
&(x_1,x_2,x_3)=(x_i,x_j,z_k)=1\quad \forall i,j,k \text{ such that } \{i,j,k\}=\{1,2,3\},
}
and
\est{
\max_{1\leqslant i,j \leqslant 3}\left\{|x_iz_i|^2d_1d_2d_3\frac{z_j}{d_j}\right\}\leqslant B.
}
Let $0<\delta<\frac12$. This parametrization and (\ref{ssf}) then imply that we just need to consider 
\es{\label{dfnn}
N^{\pm}_\delta (B):=2\sumprime_{\bx\in\Z^3_{\neq0},\bd,\bz\in\N^3\atop x_1d_1+x_2d_2+x_3d_3=0} \prod_{1\leqslant i,j\leqslant 3}F_{\delta}^{\pm} \left((x_iz_i)^2\frac{d_1d_2d_3}{d_j}\frac{z_j}{B}\right),
}
where $\sum'$ indicates the coprimality conditions~\eqref{cop2}, 
since for all $\delta>0$ we have
\est{
N^{-}_\delta (B)\leqslant N_{\widetilde{W}_3,\widetilde{H}}(B) \leqslant N^{+}_\delta (B).
}
We shall prove 
\es{\label{casc}
N^{\pm}_\delta (B)= B P_1(\log B)+O\left(B^{1+\eps} \delta^{1-C\eps}+B^{1-K}\delta^{-C}\right)
}
where $P_1$ is a polynomial of degree $4$ with leading coefficient $\frac{\mathfrak S_1 \cdot \mathcal I}{144}$ with the notations of Theorem \ref{mtt1} and for some $C,K>0$ and $\eps>0$ small enough, so that choosing $\delta=B^{-K/(C+1)}$, we obtain Theorem \ref{mtt1}.

\subsection{Initial manipulations}\label{iman}
We write the $F_{\delta}^{\pm}$ in term of its Mellin transform using the variable $s_{ij}$ for the cut-off function corresponding to $(i,j)$ and choosing
\est{
c_{s_{1j}}=\tfrac19+\eps,\quad c_{s_{2j}}=\tfrac19+4\eps, \quad c_{s_{3j}}=\tfrac19+6\eps}
 as lines of integration  for all $j\in \{1,2,3\}$.  We obtain
\est{
N^{\pm}_\delta (B)&=2\sumprime_{\bx\in\Z^3_{\neq0},\bd,\bz\in\N^3\atop x_1d_1+x_2d_2+x_3d_3=0}\frac1{(2\pi i)^9} \iint_{(c_{s_{ij}})} \frac{B^{\sum_{i,j}s_{ij}}}{\prod_{k}x_k^{2\sum_js_{kj}}d_k^{\sum_{i,j\, j\neq k}s_{ij}}z_k^{2\sum_{j}s_{kj}+\sum_{i}s_{i,k}}}\prod_{i,j}\tilde F_\delta^\pm (s_{ij})\mbox{d}s_{ij}\\
&=2\sumprime_{\bx\in\Z^3_{\neq0},\bd,\bz\in\N^3\atop x_1d_1+x_2d_2+x_3d_3=0} \frac1{(2\pi i)^9}\iint_{(c_{s_{ij}})} \frac{B^{s^*}}{\prod_{k}x_k^{\alpha_k-\xi_k}d_k^{\alpha_k+\xi_k}z_k^{s^*-2\xi_k}}\prod_{i,j}\tilde F_\delta^\pm (s_{ij})\mbox{d}s_{ij}
}
where
\est{
&\alpha_{k}:=\frac12\sum_{1 \leqslant i,j\leqslant 3}s_{ij}+
\sum_{1\leqslant j \leqslant 3}s_{kj}-
\frac12\sum_{1 \leqslant i \leqslant 3}s_{i,k}
,\qquad \xi_k:=\frac12\sum_{1 \leqslant i,j \leqslant 3 \atop j\neq k}s_{ij}-\sum_{1 \leqslant j \leqslant 3} s_{kj},\qquad \forall k\in\{1,2,3\}\\
&s^*:=\sum_{1 \leqslant i,j \leqslant 3}s_{ij}
}
so that 
\est{
\alpha_k-\xi_k=2\sum_{1 \leqslant j \leqslant 3}s_{kj},\qquad \alpha_k+\xi_k=\sum_{1 \leqslant i,j \leqslant 3 \atop  j\neq k}s_{ij}.
}
Note that we have
\begin{equation}
\xi_1+\xi_2+\xi_3=0, \quad \alpha_1+\alpha_2+\alpha_3=2s^*
\label{da}
\end{equation}
and that, like in the proof of Theorem \ref{2}, we are neglecting the dependencies of these notations on variables $s_{ij}$ in order to simplify the exposition.
Also, notice that with the above lines of integration we have
\es{\label{lit}
&\Re(\xi_1)
=8\eps,\quad \Re(\xi_2)
=-\eps,\quad \Re(\xi_3)
=-7\eps,\qquad \Re(s^*)=1+33\eps
\\
&\Re(\alpha_1)=\frac23+14\eps,
\quad \Re(\alpha_2)=\frac23+23\eps,
\quad \Re(\alpha_3)=\frac23+29\eps
}
so that in particular the above series are absolutely convergent by Lemma~\ref{ml}.\\
\indent
We make a change of variables, discarding the variables $s_{11},s_{22},s_{13},s_{23},s_{33}$, and introducing the variables 
$\alpha_1,\alpha_2,\xi_1,\xi_2$ and $s^*$. The inverse transformations are
\as{
&s_{11}= s^*-\alpha_1 - \xi_1 - s_{21} - s_{31} ,\qquad
&&s_{22} = s^* -\alpha_2 - \xi_2 - s_{12} - s_{32} ,\notag\\
&s_{13} =  -  s^*+\tfrac 32 \alpha_1 +\tfrac12 \xi_1 -  s_{12} +  s_{21} +  s_{31},\quad
&&s_{23} =  -  s^*+ \tfrac32 \alpha_2 +\tfrac12\xi_2 + s_{12} -  s_{21} +  s_{32},\label{invt}\\
&s_{33}= s^* -\tfrac12(\alpha_1 + \alpha_2- \xi_1 -\xi_2) -  s_{31} -  s_{32 }\notag
}
and the Jacobian is equal to $1$. In the following, to simplify the exposition, we shall keep using also the older variables (as well as $\xi_3$ and $\alpha_3$ given by (\ref{da})), treating them as function of the new ones. 

\subsection{Resolving the coprimality conditions} For $\Re(\alpha_k\pm\xi_k)>\frac23$ and $\Re(s^*-2\xi_k)>1$, using M\"obius inversion formula to remove the coprimality conditions~\eqref{cop2} (this is lemma~2.1 of~\cite{BBS}) we obtain
\es{\label{arm}
&\sumprime_{\bx\in\Z^3_{\neq0},\bd,\bz\in\N^3\atop x_1d_1+x_2d_2+x_3d_3=0}\prod_{k=1}^3\frac1{x_k^{\alpha_k-\xi_k}d_k^{\alpha_k+\xi_k}z_k^{s^*-2\xi_k}}\\[-0.5em]
&\hspace{5em}=
\sum_{\bb,\bc,\bff,\bg\in\N^3,\atop h\in\N}\sum_{\bx\in\Z^3_{\neq0},\bd,\bz\in\N^3\atop \sum_{k=1}^3 r_{k}x_kd_k=0}\mu(h)\prod_{k=1}^3\frac{\mu(b_k)\mu(c_k)\mu(f_k)\mu(g_k)}{( r_{1,k}x_k)^{\alpha_k-\xi_k}( r_{2,k}d_k)^{\alpha_k+\xi_k}( r_{3,k}z_k)^{s^*-2\xi_k}}\\
&\hspace{5em}=
\sum_{\bb,\bc,\bff,\bg\in\N^3,\atop h\in\N}\mu(h)\Bigg(\prod_{k=1}^3\frac{\mu(b_k)\mu(c_k)\mu(f_k)\mu(g_k)}{ r_{1,k}^{\alpha_k-\xi_k} r_{2,k}^{\alpha_k+\xi_k} r_{3,k}^{s^*-2\xi_k}}\zeta(s^*-2\xi_k)\Bigg)\mathcal A_{\br}(\balpha-\boldsymbol{\xi},\balpha+\boldsymbol{\xi}),\\
}
with the notation of Lemma \ref{ml} and where for $\{i,j,k\}=\{1,2,3\}$ we defined 
\begin{equation}
 r_{1,k}:=[g_i,g_j,h],\qquad
 r_{2,k}:=[b_i,b_j,f_k],\qquad
 r_{3,k}:=[c_i,c_j,f_k,g_k],\qquad  r_k:= r_{1,k} r_{2,k}.
\label{rrr}
\end{equation}
\indent
For future use we also observe that for $\sigma\geqslant\frac12+\eps$, with $\eps>0$, we have
\as{
\sum_{\bb,\bc,\bff,\bg\in\N^3,\atop h\in\N}\prod_{k=1}^3\frac{1}{(r_{1,k}r_{2,k}r_{3,k})^\sigma}
&=\prod_{p}\Bigg(
\sum_{\bb',\bc',\bff',\bg'\in\N^3,\atop h'\in\N}p^{-\sigma\sum_k\big(\max(g_i',g_j',h')+\max(b_i',b_j',f_k')+\max(c_i',c_j',f_k',g_k')\big)}\Bigg)\notag\\
&=\prod_{p}
(1+O(p^{-2\sigma}))\ll 1,\label{sbd}
}
where, in the sum over $k$ in the first line, $i,j$ are such that $\{i,j,k\}=\{1,2,3\}$.

Now, by~\eqref{arm}, we have 
\est{
N^{\pm}_\delta (B)&=2\sum_{\bb,\bc,\bff,\bg\in\N^3,\atop h\in\N}\frac{\mu(h)}{(2\pi i)^9} \iint_{(\cdotsp)} B^{s^*}\Bigg(\prod_{k=1}^3\frac{\mu(b_k)\mu(c_k)\mu(f_k)\mu(g_k)}{ r_{1,k}^{\alpha_k-\xi_k} r_{2,k}^{\alpha_k+\xi_k} r_{3,k}^{s^*-2\xi_k}}\zeta(s^*-2\xi_k)\Bigg)\\
&\hspace{11em}\times\mathcal A_{\br}(\balpha-\boldsymbol{\xi},\balpha+\boldsymbol{\xi})\bigg(\prod_{i,j}\tilde F_\delta^\pm (s_{ij})\bigg)\mbox{d}(\cdotsp),
}
{where, here and below, we indicate by $\int_{(\cdotsp)} \mbox{d}(\cdotsp)$ an integral with respect to the variables $s^*,$$\alpha_1$,$\alpha_2$,$\delta_1$,$\delta_2$ and $s_{12}$,$s_{21}$,$s_{31}$,$s_{32}$, along the lines of integration previously indicated, with the exclusion of the variables which have been eliminated by the computation of a residue.}

\subsection{Applying Lemma~\ref{ml}} 
\label{73}

We write $\mathcal A_{\br}(\balpha-\boldsymbol{\xi},\balpha+\boldsymbol{\xi})$ as $\mathcal M_{\br}(\balpha-\boldsymbol{\xi},\balpha+\boldsymbol{\xi})+\mathcal E_{\br}(\balpha-\boldsymbol{\xi},\balpha+\boldsymbol{\xi})$ and we split accordingly $N^{\pm}_\delta (B)$ into 
\es{\label{fsp}
N^{\pm}_\delta (B)=M^{\pm}_\delta (B)+E^{\pm}_\delta (B). 
}
Differently from the case of Theorem~\ref{2}, here $E^{\pm}_\delta (B)$ also contributes to a main term, of size $B$, which can be extracted as follow.

We move the lines of integration $c_{\xi_1}$, $c_{\xi_2}$ and  $c_{s^*}$  in the integrals defining $E^{\pm}_\delta (B)$ to $c_{\xi_1}=2K$, $c_{\xi_2}=-K$ and $c_{s^*}=1-K$ for some fixed real number $K>0$ small enough, passing through the simple pole of the integrand at 
$s^*=1+2\xi_1$. If $K$ is sufficiently small then we don't pass through any other pole and we stay inside the region~\eqref{hre} where  $\mathcal E_{\br}$ is holomorphic and where the sums are absolutely convergent. For the integral on the new lines of integration we use~\eqref{ml_bound} and a trivial bound for $\zeta$ and we obtain that, for $K$ small enough, the integral is bounded by
\as{
&\ll B^{1-K}\sum_{\bb,\bc,\bff,\bg\in\N^3,\atop h\in\N} \iint_{(\cdotsp)} \Bigg(\prod_{k=1}^3\frac{(r_k(|s^*|+|\xi_k|)(|\alpha_k|+|\xi_k|))^{C_1K}}{ (r_{1,k} r_{2,k}r_{3,k})^{\frac23-K}}\Bigg)
\bigg|\prod_{i,j}\tilde F_\delta^\pm (s_{ij})\bigg|\mbox{d}(\cdotsp)\notag\\
&\ll B^{1-K}\delta^{-C_2K},\notag
}
where the second line is obtained as for~\eqref{dax} using~\eqref{bounds_cut-off} and~\eqref{sbd}, after reintroducing the original variables $s_{ij}$.
Also, {we remind that,} here and below, $C_1,C_2,C_3,\dots$ will indicate fixed positive real numbers.

Collecting the contribution of the residue we obtain
\est{
E^{\pm}_\delta (B)&=2\!\!\!\sum_{\bb,\bc,\bff,\bg\in\N^3,\atop h\in\N}\!\!\!\frac{\mu(h)}{(2\pi i)^8} \iint_{(\cdotsp)} B^{1+2\xi_1}\Bigg(\prod_{k=1}^3\frac{\mu(b_k)\mu(c_k)\mu(f_k)\mu(g_k)}{ r_{1,k}^{\alpha_k-\xi_k} r_{2,k}^{\alpha_k+\xi_k} r_{3,k}^{1+2\xi_1-2\xi_k}}\Bigg)\zeta({1+}2\xi_1-2\xi_2)\zeta({1+}4\xi_1+2\xi_2)\\
&\hspace{11em}\times\mathcal E_{\br}(\balpha-\boldsymbol{\xi},\balpha+\boldsymbol{\xi})\bigg(\prod_{i,j}\tilde F_\delta^\pm (s_{ij})\bigg)\mbox{d}(\cdotsp)+O\left(B^{1-K}\delta^{-C_2K}\right),
}
since $\xi_3=-\xi_1-\xi_2$ and with the notations (\ref{invt}). We then move the line of integration $c_{\xi_1}$ to $c_{\xi_1}=-K/2$ passing through the pole at $\xi_1=-\xi_2/2$.  The integral on the new lines of integration can be bounded as above, whereas in the integral coming from the contribution of the residue, we move $c_{\xi_2}$ to $c_{\xi_2}=K$ passing through a simple pole at $\xi_2=0$ (in which case $\xi_3=0$). Bounding once again the contribution of the integral as above we arrive to
\est{
E^{\pm}_\delta (B)&=\frac{B}{6}\sum_{\bb,\bc,\bff,\bg\in\N^3,\atop h\in\N}\frac{\mu(h)}{(2\pi i)^6} \iint_{(\cdotsp)} \Bigg(\prod_{k=1}^3\frac{\mu(b_k)\mu(c_k)\mu(f_k)\mu(g_k)}{ r_{1,k}^{\alpha_k} r_{2,k}^{\alpha_k} r_{3,k}}\Bigg)\mathcal E_{\br}(\balpha,\balpha)\bigg(\prod_{i,j}\tilde F_\delta^\pm (s_{ij})\bigg)\mbox{d}(\cdotsp)\\
&\quad+O\left(B^{1-K}\delta^{-C_3K}\right).
}
The product $\prod_{i,j}\tilde F_\delta^\pm (s_{ij})$ can now be replaced by $\prod_{i,j}\frac1{s_{ij}}$ at a cost of an error which is $O\left(B \delta^{1-C_5\eps}\right)$. Indeed, by~\eqref{ml_bound} and~\eqref{sbd} we have 
\est{
&\sum_{\bb,\bc,\bff,\bg\in\N^3,\atop h\in\N}\frac{\mu(h)}{(2\pi i)^6} \iint_{(\cdotsp)} \Bigg(\prod_{k=1}^3\frac{\mu(b_k)\mu(c_k)\mu(f_k)\mu(g_k)}{ r_{1,k}^{\alpha_k} r_{2,k}^{\alpha_k} r_{3,k}}\Bigg)\mathcal E_{\br}(\balpha,\balpha)\bigg(\prod_{i,j}\tilde F_\delta^\pm (s_{ij})-\frac1{s_{ij}}\bigg)\mbox{d}(\cdotsp)\\
&\hspace{5em}\ll \iint_{(\cdotsp)} \bigg(\prod_{i,j}\bigg|\tilde F_\delta^\pm (s_{ij})-\frac1{s_{ij}}\bigg||s_{ij}|^{C_4\eps}\bigg)\mbox{d}(\cdotsp)\ll \delta^{1-C_5\eps},
}
by proceeding as in~\eqref{decb} after reintroducing {six of the variables $s_{ij}$ (with the remaining three variables kept as functions of those)}, since we now have the extra relation $\sum_{1 \leqslant i,j \leqslant 3}s_{ij}=s^*=1$.
Collecting the above computations, we then get
\es{\label{emt}
E^{\pm}_\delta (B)=B P_0+O\left(B \delta^{1-C_5\eps}+B^{1-K}\delta^{-C_6K}\right)
} for some $P_0\in\R$. 

We now move to the analysis of $M^{\pm}_\delta (B)$. Following the definition of $\mathcal M_{\br}$, we split $M^{\pm}_\delta (B)$ in the following way
\es{\label{emmt}
M_{\delta}^\pm(B)=\sum_{\bepsilon\in\{\pm1\}^3}M_{\delta,\bepsilon}^\pm(B),
}
where the sum is over $\bepsilon=(\epsilon_1,\epsilon_2,\epsilon_3)\in\{\pm 1\}^3$ and
\es{\label{inine}
M_{\delta,\bepsilon}^\pm(B)&:=\frac2{(2\pi i)^9}\iint_{(\cdotsp)}B^{s^*}\frac{\prod_{k}\zeta(s^*-2\xi_k)\zeta(1+2\epsilon_k\xi_k)}{2s^*-\epsilon_1\xi_1-\epsilon_2\xi_2-\epsilon_3\xi_3-2} \mathcal {Q_{\bepsilon}}({\balpha,\boldsymbol{\xi}})
\bigg(\prod_{i,j}\,\tilde F_\delta^\pm (s_{ij})\bigg)\,\mbox{d}(\cdotsp)
}
with lines of integration as given in Section~\ref{iman} (in particular~\eqref{lit} is satisfied) and where
\est{
\mathcal {Q_{\bepsilon}}({\balpha,\boldsymbol{\xi}})&:=
\sum_{\bb,\bc,\bff,\bg\in\N^3,\atop h,\ell\in\N}\mu(h) \Bigg(\prod_{k=1}^3\frac{\mu(b_k)\mu(c_k)\mu(f_k)\mu(g_k)}{ r_{1,k}^{\alpha_k-\xi_k} r_{2,k}^{\alpha_k+\xi_k} r_{3,k}^{s^*-2\xi_k}}\Bigg)
\frac{(r_1,\ell)^{1+2\epsilon_1\xi_1}(r_2,\ell)^{1+2\epsilon_2\xi_2}(r_3,\ell)^{1+2\epsilon_3\xi_2}}{\ell^{3+2\epsilon_1\xi_1+2\epsilon_2\xi_2+2\epsilon_3\xi_3}}\varphi(\ell)\\
&\hspace{1em}\times
2\pi^\frac12\prod_{i=1}^3\frac{\Gamma(\frac{-\alpha_i+\epsilon_i\xi_i}2+\frac{1+2s^*-\epsilon_1\xi_1-\epsilon_2\xi_2-\epsilon_3\xi_3}{6})}{r_i^{-\alpha_i+\epsilon_i\xi_i+\frac{1+2s^*-\epsilon_1\xi_1-\epsilon_2\xi_2-\epsilon_3\xi_3}{3}}\Gamma(\frac{1+\alpha_i-\epsilon_i\xi_i}2-\frac{1+2s^*-\epsilon_1\xi_1-\epsilon_2\xi_2-\epsilon_3\xi_3}{6})}
}
where we recall that $2s^*=\alpha_1+\alpha_2+\alpha_3$ and the notations (\ref{invt}). Notice that by~\eqref{sbd} and~\eqref{sfg} we have that $\mathcal {Q_{\bepsilon}}({\balpha,\boldsymbol{\xi}})$ is holomorphic and bounded for
\est{
|\Re(s^*-1)|, |\Re(\xi_i)|,|\Re(\alpha_i-\tfrac23)|<20K,
}
with $K$ small enough.

Now, we move the lines of integration $c_{s^*},c_{\xi_1},c_{\xi_2}$ in~\eqref{inine} to
$c_{s^*}=1-K$, $c_{\xi_1}=16K$, $c_{\xi_2}=-14 K$ (so that on the new lines of integration $\Re(\xi_3)=-2K$ since $\xi_1+\xi_2+\xi_3=0$) passing through simple poles at $s^*=1+2\xi_1$ and, if $\bepsilon\neq(-1,1,1)$, $(-1,-1,1)$, $(-1,1,-1)$, at $s^*=1+\frac{1}{2}\left(\epsilon_1\xi_1+\epsilon_2\xi_2+\epsilon_3\xi_3\right)$.
Indeed, the denominator has positive real part on the original lines of integrations whereas on the new lines of integrations it has real part equal to $2(-K-8\epsilon_1K+7\epsilon_2K+\epsilon_3K)$ which is negative if and only if $\epsilon_1=1$ or $\epsilon_1=\epsilon_2=\epsilon_3=-1$. Also note that we stay on the same side of the poles of the other $\zeta$ factors. Alluding to~\eqref{bounds_cut-off}, a trivial bound for $\zeta$ and the fact that $\mathcal {Q}_{\bepsilon}$ is bounded, we can use the same argument used several times in Sections~\ref{nocop}-\ref{arith} in order to bound trivially the contribution of the integral on the new lines of integration obtaining that its contribution is $O\left(B^{1-K/2}\delta^{-C_7}\right)$.

It follows that 
\est{
M_{\bepsilon}^\pm(B)= M_{\bepsilon,1}^\pm(B)+ M_{\bepsilon,2}^\pm(B),
}
where $M_{\bepsilon,1}^\pm$ denotes the contribution of the pole at $s^*=1+2\xi_1$ and $M_{\bepsilon,2}^\pm(B)$ is  the contribution of the pole at $s^*=1+\frac{1}{2}\left(\epsilon_1\xi_1+\epsilon_2\xi_2+\epsilon_3\xi_3\right)$ if $\bepsilon\notin\{(-1,1,1),(-1,-1,1),(-1,1,-1)\}$ and $M_{\bepsilon,2}^\pm(B):=0$ otherwise.

\subsection{The pole at $s^*=1+2\xi_1$ when $\bepsilon\neq(1,-1,1)$} \label{wq}
Using the fact that $\xi_3=-\xi_1-\xi_2$ we have
\as{
M_{\bepsilon,1}^\pm(B)&=\frac2{(2\pi i)^8}\iint_{(\cdotsp)}B^{1+2\xi_1}\frac{\prod_{k\neq 1}\zeta(1+2\xi_1-2\xi_k)\prod_{k=1}^3\zeta(1+2\epsilon_k\xi_k)}{(4-\epsilon_1)\xi_1-\epsilon_2\xi_2-\epsilon_3\xi_3} \notag \\
&\hspace{6em}\times\mathcal {Q_{\bepsilon}}({\balpha,\boldsymbol{\xi}})
\bigg(\prod_{i,j}\,\tilde F_\delta^\pm (s_{ij})\bigg)\,\mbox{d}(\cdotsp) \notag\\
&=\frac2{(2\pi i)^8}\iint_{(\cdotsp)}B^{1+2\xi_1}\frac{\zeta(1+2\epsilon_1\xi_1)\zeta(1+2\xi_1-2\xi_{2})\zeta(1+2\epsilon_2\xi_{2})}{(4-\epsilon_1+\epsilon_3)\xi_1+(-\epsilon_2+\epsilon_3)\xi_{2}}\label{inte} \\
&\hspace{6em}\times\zeta(1+4\xi_1+2\xi_{2})\zeta(1-2\epsilon_3(\xi_1+\xi_{2}))\mathcal {Q_{\bepsilon}}({\balpha,\boldsymbol{\xi}})
\bigg(\prod_{i,j}\,\tilde F_\delta^\pm (s_{ij})\bigg)\,\mbox{d}(\cdotsp), \notag
}
with $c_{\xi_1}=8\eps$ and $c_{\xi_2}=-\eps$.
 Notice that for $\bepsilon=(1,-1,1)$ one has a double pole when $4\xi_1+2\xi_2=0$ which causes some (mostly notational) issues when moving the lines of integration as we shall do throughout this section. For this reason, we prefer to defer to the next section the treatment of this term.

Next, we move the lines of integration $c_{\xi_1}$ and $c_{\xi_{2}}$ to $c_{\xi_1}=-K$ and $c_{\xi_2}=-4K$ passing through several poles. As before, the integral on the new lines of integration is $O\left(B^{1-K}\delta^{-C_8}\right)$. The poles we encounter are the following:
\begin{enumerate}[(a)]
\item a pole at $\xi_1=0$ which is simple if $\epsilon_2\neq\epsilon_3$ and is double if $\epsilon_2=\epsilon_3$;
\item a simple pole at $\xi_1=-\frac12\xi_2$;
\item a simple pole at $\xi_1=-\xi_2$;
\item a simple pole at $\xi_1=-\frac13\xi_2$ if $\bepsilon=(-1,-1,1)$.
\end{enumerate}
We now examine the contribution of the residue of each of these poles.

(a) We write the contribution of the residue at $\xi_1=0$ as a small circuit integral
\est{
& \frac{2B}{(2\pi i)^7}\iint_{(\cdotsp)}{\frac1{2\pi i}\oint_{|\xi_1|=\eps/2}}\Bigg(\left(1+{2\xi_1\log B}\right)\frac{\zeta(1+2\epsilon_1\xi_1)\zeta(1+2\xi_1-2\xi_{2})\zeta(1+2\epsilon_2\xi_{2})}{(4-\epsilon_1+\epsilon_3)\xi_1+(-\epsilon_2+\epsilon_3)\xi_{2}} \\
&\hspace{7em}\times\zeta(1+4\xi_1+2\xi_{2})\zeta(1\mp_{3}2(\xi_1+\xi_{2}))\mathcal {Q_{\boldsymbol{\epsilon}}}({\balpha,\boldsymbol{\xi}})
\bigg(\prod_{i,j}\,\tilde F_\delta^\pm (s_{ij})\bigg)\notag\Bigg)\,\mbox{d}(\cdotsp)
}
where we can assume that the line of integration $c_{\xi_2}$ is at $c_{\xi_2}=-\eps$ .
The next step is to observe that we can replace $\prod_{i,j}\tilde F_\delta^\pm (s_{ij})$ by $\prod_{i,j}\frac1{s_{ij}}$ at a cost of an error which is 
\es{\label{erx}
O\left(B^{1+\eps} \delta^{1-C_9\eps}\right).
}
To show this we first observe that, by the convexity bound \cite[(5.1.4)]{Tit}, on the lines of integration the integrand is \est{
\ll \log B\,(1+|\xi_1|+|\xi_2|)^{C_{10}\eps} \prod_{i,j}|\tilde F_\delta^\pm (s_{ij})|
\ll \log B \prod_{i,j}|\tilde F_\delta^\pm (s_{ij})|(1+|s_{ij}|)^{C_{11}\eps}.
}
We go back to using the $s_{ij}$ as variables (excluding for example the variable $s_{11}$ because we have a variable less) and observe we have $s^*=\sum_{1\leqslant i,j\leqslant 3}s_{ij}=1+2\xi_1=1+O(\varepsilon)$ and thus $s_{11}=1-\sum_{(i,j)\neq(1,1)}s_{ij}+O(\varepsilon)$. Thus, proceeding as for~\eqref{decb} we can replace $\prod_{i,j}\tilde F_\delta^\pm (s_{ij})$ by $\prod_{i,j}\frac1{s_{ij}}$ at a cost of an error which is bounded by~\eqref{erx}. In the end, we find that the contribution of the pole at $\xi_1=0$ is
\est{
B P_{\bepsilon,1,1}(\log B)+O\left(B^{1+\eps} \delta^{1-C_9\eps}+B^{1-K}\delta^{-C_8}\right)
}
where $P_{\bepsilon,1,1}$ is a polynomial of degree $0$ or $1$ (not depending on the choice for $\delta$ and $\pm$ in $N^{\pm}_\delta (B)$) obtained by evaluating the above integral with the $\prod_{i,j}\frac1{s_{ij}}$ instead of $\prod_{i,j}\tilde F_\delta^\pm (s_{ij})$.

(b) The contribution of the pole at $\xi_1=-\frac12\xi_2$ is  
\est{
 &\frac{-1}{(2\pi i)^7}\iint_{(\cdotsp)}B^{1-\xi_2}\frac{\zeta(1-\epsilon_1\xi_2)\zeta(1-3\xi_{2})\zeta(1+2\epsilon_2\xi_{2})\zeta(1-\epsilon_3\xi_{2})}{(4-\epsilon_1+2\epsilon_2-\epsilon_3)\xi_2} \mathcal {Q_{\bepsilon}}({\balpha,\boldsymbol{\xi}})
\bigg(\prod_{i,j}\,\tilde F_\delta^\pm (s_{ij})\bigg)\,\mbox{d}(\cdotsp). \notag
}
We move the line of integration $c_{\xi_2}$ to $c_{\xi_2}=K$ passing through a pole at $\delta_2=0$. The contribution of the integral on the new lines of integration is $O\left(B^{1-K}\delta^{-C_{12}}\right)$. For the contribution of the residue, we follow the same approach as above writing it as a small circle integral and observing that since again $s^*=\sum_{1\leqslant i,j\leqslant 3}s_{ij}=1+2\xi_1=1-\xi_2=1+O(\varepsilon)$ we can replace $\prod_{i,j}\,\tilde F_\delta^\pm (s_{ij})$ by $\prod_{i,j}\frac1{s_{ij}}$ at the cost of an error which is $O\left(B^{1+\eps}\delta^{1-C_{13}\eps}\right)$. Then, computing the integral we find that the contribution to~\eqref{inte} from the pole at $\xi_1=-\frac12\xi_2$ is 
\est{
B P_{\bepsilon,1,2}(\log B)+O\left(B^{1+\eps} \delta^{1-C_{13}\eps}+B^{1-K}\delta^{-C_{12}}\right)
}
where $P_{\bepsilon,1,2}$ is a polynomial of degree $4$ of leading coefficient 
\est{
&-\frac{\epsilon_1\epsilon_2\epsilon_3}{ 2\cdot 3\cdot (4-\epsilon_1+2\epsilon_2-\epsilon_3)}\II,
}
where
\begin{equation}
\II:= \frac1{4!}\frac1{(2\pi i)^6}\iint_{(\cdotsp)}\mathcal {Q_{\bepsilon}}({\balpha,\bzero})
\bigg(\prod_{i,j}\frac1{s_{ij}}\bigg)\,\mbox{d}(\cdotsp).
\label{intt}
\end{equation}
It is noteworthy that, since $\boldsymbol{\xi}=\bzero$, $\II$ does not depend on $\bepsilon$.

(c) The contribution of the pole at $\xi_1=-\xi_2$ is 
$$
\frac{\epsilon_3}{(2\pi i)^7}\!\!\iint_{(\cdotsp)}\!\!\!\!B^{1-2\xi_2}\frac{\zeta(1-2\epsilon_1\xi_2)\zeta(1-4\xi_{2})\zeta(1+2\epsilon_2\xi_{2})\zeta(1-2\xi_{2})}{(4-\epsilon_1+\epsilon_2)\xi_2} \mathcal {Q_{\bepsilon}}\big({\balpha,(-\xi_2,\xi_2,0)}\big)
\!\bigg(\prod_{i,j}\,\tilde F_\delta^\pm (s_{ij})\bigg)\!\mbox{d}(\cdotsp)
$$
and proceeding as above we have that this is
\est{
B P_{\bepsilon,1,3}(\log B)+O\left(B^{1+\eps} \delta^{1-C_{14}\eps}+B^{1-K}\delta^{-C_{15}}\right)
}
where $P_{\bepsilon,1,3}$ is a polynomial of degree $4$ of leading coefficient 
\est{
& \frac{\epsilon_1\epsilon_2\epsilon_3}{ 2\cdot (4-\epsilon_1+\epsilon_2)}\II,
}
with the notation (\ref{intt}).

(d) The contribution of the pole at $\xi_1=-\frac13\xi_2$ with $\bepsilon=(-1,-1,1)$ is
\est{
&\frac13\frac{1}{(2\pi i)^7}\iint_{(\cdotsp)}B^{1-\frac23\xi_2}\zeta\left(1+\tfrac23\xi_2\right)\zeta\left(1-\tfrac83\xi_{2}\right)\zeta(1-2\xi_{2})\\
&\hspace{7em}\times\zeta\left(1+\tfrac23\xi_{2}\right)\zeta\left(1-\tfrac43\xi_{2}\right)\mathcal {Q_{\bepsilon}}({\balpha,\boldsymbol{\xi}})
\bigg(\prod_{i,j}\,\tilde F_\delta^\pm (s_{ij})\bigg)
\,\mbox{d}(\cdotsp) 
}
and, proceeding as above, we have that this is
\est{
B P_{\bepsilon,1,4}(\log B)+O\left(B^{1+\eps} \delta^{1-C_{16}\eps}+B^{1-K}\delta^{-C_{17}}\right)
}
where $P_{\bepsilon,1,4}$ is a polynomial of degree $4$ of leading coefficient
\est{
&\frac{1}{ 2^4\cdot 3}\II.
}
\newline
\indent
Regrouping the above four contributions, we obtain that for $\bepsilon\neq(1,-1,1)$ we have
\est{
M_{\ba,\bepsilon,1}^\pm(B)=B P_{\bepsilon,1}(\log B)+O(B^{1+\eps} \delta^{1-C_{18}\eps}+B^{1-K}\delta^{-C_{19}})
}
where $P_{\bepsilon,1}$ is a degree $4$ polynomial with leading coefficient
\est{
\II\times
\begin{cases}
\displaystyle -\frac{\epsilon_1\epsilon_2\epsilon_3}{ 2\cdot 3\cdot (4-\epsilon_1+2\epsilon_2-\epsilon_3)} +\frac{\epsilon_1\epsilon_2\epsilon_3}{ 2\cdot (4-\epsilon_1+\epsilon_2)} & \mbox{if } \bepsilon\neq (-1,-1,1),(1,-1,1)\\[3mm]
\displaystyle\frac1{2^4}
&\mbox{if } \bepsilon=(-1,-1,1),
\end{cases}
}
with the notation (\ref{intt}).

\subsection{The pole at $s^*=1+2\xi_1$ for $\bepsilon=(1,-1,1)$}\label{wqq}For brevity, in this section we write $\bepsilon^*:=(1,-1,1)$.
 We have
\est{
M_{{\bepsilon^*}\!,1}^\pm(B)&=\frac2{(2\pi i)^8}\iint_{(\cdotsp)}B^{1+2\xi_1}\frac{\zeta(1+2\xi_1)\zeta(1+2\xi_1-2\xi_{2})\zeta(1-2\xi_{2})}{4\xi_1+2\xi_{2}} \\
&\hspace{4em}\times\zeta(1+4\xi_1+2\xi_{2})\zeta(1-2(\xi_1+\xi_{2}))\mathcal {Q}_{{\bepsilon^*}}({\balpha,\boldsymbol{\xi}})
\bigg(\prod_{i,j}\,\tilde F_\delta^\pm (s_{ij})\bigg)\,\mbox{d}(\cdotsp). \notag
}
Here, we move $c_{\xi_2}$ to $c_{\xi_2}=2K$, passing through simple poles at $\xi_2=0$ and $\xi_2=\xi_1$. For the integral on the new lines of integration, we move $c_{\xi_1}$ to $c_{\xi_1}=-K/2$ passing through a pole at $\xi_1=0$. {If $K$ is sufficiently small,} the integral on these new lines of integration can then be estimated trivially by $O\left(B^{1-K} \delta^{-C_{20}}\right)$. Thus, overall we shall compute the following residues arising for the following poles
\begin{enumerate}[(a)]
\item a simple pole at $\xi_2=0$ 
\item a simple pole $\xi_2=\xi_1$;
\item a simple pole at $\delta_1=0$ (with $c_{\xi_2}>0$).
\end{enumerate}

\begin{enumerate}[(a)]
\item The contribution of the residue at $\xi_2=0$ is
\est{
&\frac1{(2\pi i)^7}\iint_{(\cdotsp)}B^{1+2\xi_1}\frac{\zeta(1+2\xi_1)^2\zeta(1+4\xi_1)\zeta(1-2\xi_{1})}{4\xi_1}\mathcal {Q}_{{\bepsilon^*}}({\balpha,\boldsymbol{\xi}})
\bigg(\prod_{i,j}\,\tilde F_\delta^\pm (s_{ij})\bigg)\,\mbox{d}(\cdotsp) \notag
}
and, as in the previous section, one sees that this is $$BP_{{\bepsilon^*}\!,1,4}(\log B)+O\left(B^{1+\eps} \delta^{1-C_{21}\eps}+B^{1-K}\delta^{-C_{22}}\right)$$ with $P_{{\bepsilon^*}\!,4,1}$ of degree $4$ with leading coefficient
\est{
&-\frac{1}{ 2^3}\II,
}
with the notation (\ref{intt}).

\item The contribution of the residue at  $\xi_2=\xi_1$ is
\est{
&\frac1{(2\pi i)^7}\iint_{(\cdotsp)}B^{1+2\xi_1}\frac{\zeta(1+2\xi_1)\zeta(1-2\xi_{1})\zeta(1+6\xi_1)\zeta(1-4\xi_1)}{6\xi_1} \mathcal {Q}_{{\bepsilon^*}}({\balpha,\boldsymbol{\xi}})
\bigg(\prod_{i,j}\,\tilde F_\delta^\pm (s_{ij})\bigg)\,\mbox{d}(\cdotsp) \notag\\
&\hspace{4em}=BP_{{\bepsilon^*}\!,1,2}(\log B)+O\left(B^{1+\eps} \delta^{1-C_{23}\eps}+B^{1-K}\delta^{-C_{24}}\right)
}
with $P_{{\epsilon^*}\!,1,2}$ of degree $4$ with leading coefficient 
\est{
&\frac{1}{2^2\cdot 3^2}\II.
}

\item The contribution of the residue at  $\xi_1=0$ is
\est{
& \frac1{(2\pi i)^7}\iint_{(\cdotsp)}B\frac{\zeta(1-2\xi_{2})^3\zeta(1+2\xi_{2})}{2\xi_{2}}\mathcal {Q}_{{\bepsilon^*}}({\balpha,\boldsymbol{\xi}})
\bigg(\prod_{i,j}\,\tilde F_\delta^\pm (s_{ij})\bigg)\,\mbox{d}(\cdotsp) \notag\\
&\hspace{4em}=P_{{\bepsilon^*}\!,1,3}B+O\left(B^{1+\eps} \delta^{1-C_{25}\eps}+B^{1-K}\delta^{-C_{26}}\right)
}
with $P_{{\bepsilon^*}\!,1,3}\in\R$.\\
\newline
\indent
Collecting the various terms, we then find that
\est{
M_{{\bepsilon^*}\!,1}^\pm(B)&=BP_{{\bepsilon^*}\!,1}(\log B)+O\left(B^{1+\eps} \delta^{1-C_{27}\eps}+B^{1-K}\delta^{-C_{28}}\right)
}
where $P_{{\bepsilon^*}\!,1}$ is a polynomial of degree $4$ with leading coefficient
\est{
-\frac{7}{2^3\cdot 3^2}\II.
}

\end{enumerate}

\subsection{The pole at $s^*=1+\frac{1}{2}\left(\epsilon_1\xi_1+\epsilon_2\xi_2+\epsilon_3\xi_3\right)$.}\label{wqqq} Recall that $M_{\bepsilon,2}^\pm(B):=0$ if $\bepsilon=(-1,1,1)$, $(-1,-1,1)$, $(-1,1,-1)$. In all other cases we have 
\est{
M_{\bepsilon,2}^\pm(B)&=\frac1{(2\pi i)^8}\iint_{(\cdotsp)}B^{1+\frac{1}{2}\left(\epsilon_1\xi_1+\epsilon_2\xi_2+\epsilon_3\xi_3\right)}\prod_{k}\zeta\left(1+\frac{1}{2}\left(\epsilon_1\xi_1+\epsilon_2\xi_2+\epsilon_3\xi_3\right)-2\xi_k\right)\zeta(1+2\epsilon_k\xi_k)\\
&\hspace{7em}\times\mathcal {Q_{\bepsilon}}({\balpha,\boldsymbol{\xi}})
\bigg(\prod_{i,j}\,\tilde F_\delta^\pm (s_{ij})\bigg)\,\mbox{d}(\cdotsp).
}
%
If $\epsilon_1=\epsilon_2=\epsilon_3$, then the exponent of $B$ is $1$. In particular, since in this case we have the relation $s^*=\sum_{i,j}s_{ij}=1$ because $\xi_1+\xi_2+\xi_3=0$, we can replace $\prod_{i,j}\tilde F_\delta^\pm (s_{ij})$ by $\prod_{i,j} s_{ij}^{-1}$ at a cost of an error which is $O\left(B^{1+\eps} \delta^{1-C_{25}\eps}\right)$ like in Section~\ref{73}. Thus, for $\bepsilon=(1,1,1),(-1,-1,-1)$ we have
\est{
M_{\bepsilon,2^\pm(B)}=P_{\bepsilon,2,1} B +O\left(B^{1+\eps} \delta^{1-C_{29}\eps}\right)
}
for some $P_{\bepsilon,2,1}\in\R$. 

Therefore, we are left with considering the cases where $\epsilon_r=-\epsilon_{k_1}=-\epsilon_{k_2}$ with $\{r,k_1,k_2\}=\{1,2,3\}$ and $k_1<k_2$. Notice in particular that since $\xi_1+\xi_2+\xi_3=0$, then we have $\epsilon_1\xi_1+\epsilon_2\xi_2+\epsilon_3\xi_3= 2\epsilon_r\xi_r$. Thus, 
\est{
M_{\bepsilon,2^\pm(B)}&=\frac1{(2\pi i)^8}\iint_{(\cdotsp)}B^{1+\epsilon_r\xi_r}\zeta(1+(\epsilon_r-2)\xi_r)\zeta(1+2\epsilon_r\xi_r)
\zeta(1+\epsilon_r\xi_r-2\xi_{k_1})\zeta(1+2\epsilon_{k_1}\xi_{k_1})\\
&\hspace{1em}\times\zeta(1+(\epsilon_r+2)\xi_r+2\xi_{k_1})\zeta(1-2\epsilon_{k_2}(\xi_r+\xi_{k_1}))\mathcal {Q_{\bepsilon}}({\balpha,\boldsymbol{\xi}})
\bigg(\prod_{i,j}\,\tilde F_\delta^\pm (s_{ij})\bigg)\,\mbox{d}(\cdotsp).
}
Notice that we made a change of variables (of jacobian $\pm 1$, since $\xi_1+\xi_2+\xi_3=0$) using $\xi_r,\xi_{k_1}$ rather than $\xi_1,\xi_2$.

\indent
Next, we move the lines of integration $c_{\xi_r}$, $c_{\xi_{k_1}}$ to $c_{\xi_r}=c_{\xi_{k_1}}=-\epsilon_r K$. In doing so we pass through a double pole at $\xi_r=0$ and, if $r=1$, through the simple poles of the $\zeta$ factors on the second line at $\xi_1=-\frac23\xi_{2}$ and $\xi_1=-\xi_{2}$. Thus, as in the previous sections, if $\bepsilon=(1,-1,1),(1,1,-1)$ we find
\est{
M_{\bepsilon,2^\pm(B)}=B P_{\bepsilon,2,2}(\log B)+O\left(B^{1+\eps} \delta^{1-C_{30}\eps}+B^{1-K}\delta^{-C_{31}}\right)
}
with $P_{\bepsilon,2,2}(\log B)$ of degree at most $1$
and the same holds for the contribution of the pole at $\xi_r=0$ when $r=1$. We are therefore left with computing the contributions of the two remaining poles when $\bepsilon=(1,-1,-1)$ and $(r,k_1,k_2)=(1,2,3)$. The contribution of the pole at $\xi_1=-\frac23\xi_{2}$ is 
\est{
&\frac13\frac1{(2\pi i)^7}\iint_{(\cdotsp)}B^{1-\frac23\xi_{2}}\zeta\left(1+\tfrac23\xi_{2}\right)^2\zeta\left(1-\tfrac43\xi_2\right)
\zeta\left(1-\tfrac83\xi_{2}\right)\zeta(1-2\xi_{2})\mathcal{Q}_{(1,-1,-1)}({\balpha,\boldsymbol{\xi}})
\bigg(\prod_{i,j}\,\tilde F_\delta^\pm (s_{ij})\bigg)\,\mbox{d}(\cdotsp),
}
and this is $B P_{(1,-1,-1),2,3}(\log B)+O\left(B^{1+\eps} \delta^{1-C_{31}\eps}+B^{1-K}\delta^{-C_{32}}\right)$ with $P_{(1,-1,-1),2,3}$ a polynomial of degree $4$ and leading coefficient
\est{
&\frac{1}{ 2^4\cdot3}\II,
}
with the notation (\ref{intt}).\\
\indent
The contribution of the pole at $\xi_1=-\xi_{2}$ is
\est{
&\frac12\frac1{(2\pi i)^7}\iint_{(\cdotsp)}B^{1-\xi_2}\zeta(1+\xi_2)\zeta(1-2\xi_2)^2
\zeta(1-3\xi_{2})\zeta(1-\xi_{2})\mathcal {Q}_{(1,-1,-1)}({\balpha,\boldsymbol{\xi}})
\bigg(\prod_{i,j}\,\tilde F_\delta^\pm (s_{ij})\bigg)\,\mbox{d}(\cdotsp).
}
and this is $B P_{(1,-1,-1),2,4}(\log B)+O\left(B^{1+\eps} \delta^{1-C_{33}\eps}+B^{1-K}\delta^{-C_{34}}\right)$ with $P_{(1,-1,-1),2,4}$ of degree $4$ with leading coefficient
\est{
&
-\frac1{ 2^{3}\cdot 3}\II.
}
\indent
Thus, summarizing for all $\bepsilon\in\{1,-1\}^3$ we have
\est{
M_{\bepsilon,2}^\pm(B)=B P_{\bepsilon,2}(\log B)+O\left(B^{1+\eps} \delta^{1-C_{35}\eps}+B^{1-K}\delta^{-C_{36}}\right)
}
where $P_{\bepsilon,2}$ is a polynomial of degree at most $1$ unless $\bepsilon=(1,-1,-1)$ in which case $P_{\bepsilon,2}$ is of degree $4$ with leading coefficient
\est{
&
-\frac1{ 2^{4}\cdot 3}\II.
}

\subsection{Regrouping the various contributions} By~\eqref{fsp} and~\eqref{emt}, \eqref{emmt} and regrouping the contributions from Sections~\ref{wq},~\ref{wqq} and~\ref{wqqq}, we find 
\est{
N_{\delta}^\pm(B)&=B \sum_{\bepsilon\in\{1,-1\}^3}\big(P_{\bepsilon,1}(\log B)+P_{\bepsilon,2}(\log B)\big)+BP_0+O\left(B^{1+\eps} \delta^{1-C_{37}\eps}+B^{1-K}\delta^{-C_{38}}\right)
\\&=B P_1(\log B))+O\left(B^{1+\eps} \delta^{1-C_{37}\eps}+B^{1-K}\delta^{-C_{38}}\right)
}
where $P_1$ is a polynomial of degree $4$ with leading coefficient
\est{
&\Bigg(\frac1{16}-\frac7{72}-\frac1{ 48}+\sum_{\bepsilon\neq (-1,-1,1),(1,-1,1)}\pr{\frac{\epsilon_1\epsilon_2\epsilon_3}{ 2\cdot (4-\epsilon_1+\epsilon_2)}-\frac{\epsilon_1\epsilon_2\epsilon_3}{ 2\cdot 3\cdot (4-\epsilon_1+2\epsilon_2-\epsilon_3)}}\Bigg)\II=\frac1{48}\II.
}
The estimate~\eqref{casc} and then the Theorem \ref{mtt1} then follows by the final next lemma.

\begin{lemma}
With the notations of Theorem \ref{mtt1} and (\ref{intt}), we have
$
\II=\frac13\mathcal I \mathfrak{S}_1.
$
\end{lemma}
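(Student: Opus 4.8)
The plan is to evaluate $\mathcal{Q}_{\bepsilon}(\balpha,\bzero)$ in closed form, to recognise the arithmetic series that appears as the singular series $\mathfrak{S}_1$, and then to reduce the complex integral~(\ref{intt}) to the one already computed in Lemma~\ref{integral}. The whole argument parallels, with $\ba=\bone$, the end of the proof of Proposition~\ref{mlc} together with the Euler product computation of Lemma~\ref{euler}.

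\textbf{Step 1: simplifying $\mathcal{Q}_{\bepsilon}$ at $\boldsymbol{\xi}=\bzero$.} The integral~(\ref{intt}) is taken at the point produced by the residues at $s^*=1+2\xi_1$, $\xi_1=-\tfrac12\xi_2$ and $\xi_2=0$, so in particular $s^*=1$ and $\boldsymbol{\xi}=\bzero$; thus $\alpha_1+\alpha_2+\alpha_3=2s^*=2$ and $\alpha_3=2-\alpha_1-\alpha_2$. Plugging $\boldsymbol{\xi}=\bzero$, $s^*=1$ into the definition of $\mathcal{Q}_{\bepsilon}$ from Section~\ref{73}: the quantity $\tfrac{1+2s^*-\epsilon_1\xi_1-\epsilon_2\xi_2-\epsilon_3\xi_3}{6}$ equals $\tfrac12$, so the Gamma quotients collapse to $\Gamma(\tfrac{1-\alpha_i}{2})/\Gamma(\tfrac{\alpha_i}{2})$, while $r_i^{-\alpha_i+\frac{1+2s^*-\cdots}{3}}=(r_{1,i}r_{2,i})^{1-\alpha_i}$ in the denominator combines with $r_{1,i}^{\alpha_i}r_{2,i}^{\alpha_i}r_{3,i}$ to leave just $r_{1,i}r_{2,i}r_{3,i}$; the $\bepsilon$-dependence disappears entirely. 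Hence
\[
\mathcal{Q}_{\bepsilon}(\balpha,\bzero)=2\sqrt{\pi}\,\mathfrak{S}^{\ast\ast}\prod_{i=1}^{3}\frac{\Gamma(\tfrac{1-\alpha_i}{2})}{\Gamma(\tfrac{\alpha_i}{2})},\qquad \mathfrak{S}^{\ast\ast}:=\sum_{\bb,\bc,\bff,\bg\in\N^{3},\,h\in\N}\mu(h)\Bigg(\prod_{k=1}^{3}\frac{\mu(b_k)\mu(c_k)\mu(f_k)\mu(g_k)}{r_{1,k}r_{2,k}r_{3,k}}\Bigg)\sum_{\ell\geqslant1}\frac{(r_1,\ell)(r_2,\ell)(r_3,\ell)\varphi(\ell)}{\ell^{3}},
\]
with $r_{\cdot,k},r_k$ as in~(\ref{rrr}); this series converges absolutely by~(\ref{sbd}) and $(r_k,\ell)\leqslant r_k$, so it factors as an Euler product over primes.

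\textbf{Step 2: the arithmetic identity $\mathfrak{S}^{\ast\ast}=\mathfrak{S}_1$.} For each prime $p$ the local factor is a finite sum over the $p$-adic valuations $b_k',c_k',f_k',g_k',h'\in\{0,1\}$ and $\ell'\geqslant0$, with signs from the Möbius factors and $p$-exponents built from the maxima defining the lcm's in~(\ref{rrr}). Exactly as in Lemma~\ref{euler} (and Lemma~2.7 of~\cite{BBS}), the terms with $\ell'\geqslant2$ contribute nothing: for such $\ell'$ one has $\min(\ell',r_k')=r_k'$, so the $p$-exponent no longer depends on one of the squarefree variables and the contributions of its two possible values cancel. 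After restricting to $\ell'\in\{0,1\}$ one is left with a purely finite computation — routinely verified by computer algebra — which yields the local factor $(1-\tfrac1p)^{5}\bigl(1+\tfrac5p+\tfrac5{p^{2}}+\tfrac1{p^{3}}\bigr)$, so $\mathfrak{S}^{\ast\ast}=\mathfrak{S}_1$ and $\mathcal{Q}_{\bepsilon}(\balpha,\bzero)=2\sqrt{\pi}\,\mathfrak{S}_1\prod_{i=1}^{3}\Gamma(\tfrac{1-\alpha_i}{2})/\Gamma(\tfrac{\alpha_i}{2})$.

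\textbf{Step 3: the complex integral, and the constant.} Substituting into~(\ref{intt}) and recalling that the six remaining integration variables are $\alpha_1,\alpha_2$ (lines at $\Re=\tfrac23$, with $\alpha_3=2-\alpha_1-\alpha_2$) and $s_{12},s_{21},s_{31},s_{32}$, while the nine $s_{ij}$ are, via~(\ref{invt}) with $s^*=1$, $\xi_1=\xi_2=0$, affine-linear functions of these six variables, we get
\[
\II=\frac{\mathfrak{S}_1}{4!}\,\frac{1}{(2\pi i)^{2}}\iint_{(\frac23)}2\sqrt{\pi}\Bigg(\prod_{i=1}^{3}\frac{\Gamma(\tfrac{1-\alpha_i}{2})}{\Gamma(\tfrac{\alpha_i}{2})}\Bigg)\Bigg(\frac{1}{(2\pi i)^{4}}\iint\prod_{i,j}\frac{1}{s_{ij}}\,\df s_{12}\,\df s_{21}\,\df s_{31}\,\df s_{32}\Bigg)\df\alpha_1\,\df\alpha_2.
\]
The inner fourfold integral is evaluated exactly as in the proof of Proposition~\ref{mlc} — pushing each of $s_{12},s_{21},s_{31},s_{32}$ to $\pm\infty$ and applying the residue theorem repeatedly, a finite rational-function manipulation easily checked by computer algebra — and equals $8\big/\bigl(\alpha_1\alpha_2\alpha_3(1-\alpha_1)(1-\alpha_2)(1-\alpha_3)\bigr)$. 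Since $8/4!=\tfrac13$ and the integrand is symmetric in $\alpha_1,\alpha_2,\alpha_3$, the identity $\II=\tfrac13\mathcal I\,\mathfrak{S}_1$ follows from Lemma~\ref{integral} applied with $\ba=\bone$.

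\textbf{Main obstacle.} The one genuinely laborious point is Step~2: the $p$-local evaluation involves thirteen $\{0,1\}$-variables together with $\ell'$, and running the cancellation requires, for every configuration with $\ell'\geqslant2$, locating a squarefree variable on which the $p$-exponent does not depend; the subsequent finite evaluation to $(1-\tfrac1p)^{5}(1+\tfrac5p+\tfrac5{p^{2}}+\tfrac1{p^{3}})$ is mechanical but lengthy. Everything else is a transcription (with $\ba=\bone$) of computations already carried out in Sections~\ref{nocop}--\ref{arith}; the only other place needing care is the bookkeeping of the numerical factor $\tfrac13$, which is however forced by comparison with Lemma~\ref{integral}.
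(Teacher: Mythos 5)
Your proposal is correct and follows essentially the same route as the paper: simplify $\mathcal{Q}_{\bepsilon}(\balpha,\bzero)$ at $s^*=1$ so that the Gamma quotients and the powers of $r_{1,i},r_{2,i},r_{3,i}$ collapse, identify the resulting arithmetic factor with $\mathfrak{S}_1$ via the Euler-product/cancellation argument of Lemma~2.7 of~\cite{BBS}, evaluate the inner fourfold $s_{ij}$-integral by residues to $8\big(\alpha_1\alpha_2\alpha_3(1-\alpha_1)(1-\alpha_2)(1-\alpha_3)\big)^{-1}$, and conclude with $8/4!=\tfrac13$ and Lemma~\ref{integral}. The only difference is presentational: the paper outsources the $p$-local computation entirely to~\cite{BBS}, whereas you sketch the cancellation for $\ell'\geqslant 2$ explicitly.
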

\begin{proof}
First, we observe that for $s^*=1$ we have
\est{
\mathcal {Q}_{\bepsilon}({\balpha,\bzero})&=
\sum_{\bb,\bc,\bff,\bg\in\N^3,\atop h,\ell\in\N}\mu(h) \Bigg(\prod_{k=1}^3\frac{\mu(b_k)\mu(c_k)\mu(f_k)\mu(g_k)}{ r_{1,k}\, r_{2,k}\, r_{3,k}}\Bigg)
\frac{(r_1,\ell)(r_2,\ell)(r_3,\ell)}{\ell^{3}}\varphi(\ell)\cdot 
2\pi^\frac12\prod_{i=1}^3\frac{\Gamma(\frac{1-\alpha_i}2)}{\Gamma(\frac{\alpha_i}2)}\\
&=\Smt\cdot 2\pi^\frac12\prod_{i=1}^3\frac{\Gamma(\frac{1-\alpha_i}2)}{\Gamma(\frac{\alpha_i}2)}
}
where in the second row we computed the Euler product thanks to the lemma~2.7 of~\cite{BBS}.
Therefore, we have
\est{
\II=\frac{\Smt}{4!(2\pi i)^2}\iint_{(c_{\alpha_1},c_{\alpha_2})}\!\!\!\!2\pi^\frac12\bigg(\prod_{i=1}^3\frac{\Gamma(\frac{1-\alpha_i}2)}{\Gamma(\frac{\alpha_i}2)}\bigg)\times
\frac{1}{(2\pi i)^4}\int_{(c_{s_{12}},c_{s_{31}},c_{s_{32}},c_{s_{21}})}\,\frac{\mbox{d}s_{12}\mbox{d}s_{21}\mbox{d}s_{31}\mbox{d}s_{32}}{L(\balpha,s_{12},s_{21},s_{31},s_{32})} \mbox{d}\alpha_1\mbox{d}\alpha_2,
}
where the lines of integrations can be taken at $c_{s_{ij}}=\frac19$, $c_{\alpha_1}=c_{\alpha_2}=\frac23$ and, by~\eqref{invt},
\est{
L(\balpha,s_{12},s_{21},s_{31},s_{32})&=(1-\alpha_1 - s_{21} - s_{31})(1 -\alpha_2  - s_{12} - s_{32})(   -1+\tfrac 32 \alpha_1 -  s_{12} +  s_{21} +  s_{31})\\
&\quad\times(  -1+ \tfrac32 \alpha_2 + s_{12} -  s_{21} +  s_{32})(1 -\tfrac12\alpha_1-\tfrac12 \alpha_2 -  s_{31} -  s_{32 })s_{12}s_{21}s_{31}s_{32}.
}
In the same way as in the end of the proof of Proposition~\ref{mlc}, one has that the inner integral over $s_{12},s_{21},s_{31},s_{32}$ can be evaluated by moving each line of integration to $-\infty$ and collecting the residues of the poles encountered in the process. After this simple but a bit lengthy calculation one finds that the inner integral is equal to $8(\alpha_1\alpha_2\alpha_3(1-\alpha_1)(1-\alpha_2)(1-\alpha_3))^{-1}$, with $\alpha_3=2-\alpha_1-\alpha_2$.
Thus, we finally get
\est{
\II=\frac{\mathfrak{S}_1}{3(2\pi i)^2}\iint_{(c_{\alpha_1},c_{\alpha_2})}2\pi^\frac12
\prod_{i=1}^3\frac{\Gamma(\frac{1-\alpha_i}2)}{\alpha_i(1-\alpha_i)\Gamma(\frac{\alpha_i}2)} \,{\mbox{d}\alpha_2\mbox{d}\alpha_3}=\frac13 \mathfrak{S}_1\cdot\mathcal I
}
by Lemma~\ref{integral}.
\end{proof}
\noindent
\textbf{Acknowledgments.--} The authors are grateful to R\'egis de la Bret\`eche for helpful conversations and comments, for his supervision of the second author during the end of his PhD and for inviting the first author to Paris, where this collaboration started. 

The first author is member of the INdAM group GNAMPA and is partially supported by  PRIN ``Number Theory and Arithmetic Geometry''.

\setcounter{tocdepth}{2}

\bibliographystyle{plainfr}
\bibliography{bibliogr3}

\end{document}